\documentclass[11pt]{amsart} \textwidth=14.5cm \oddsidemargin=1cm
\evensidemargin=1cm

\usepackage{amsmath,wasysym}
\usepackage{amsxtra}
\usepackage{amscd}
\usepackage{amsthm}
\usepackage{amsfonts}
\usepackage{amssymb}
\usepackage{eucal}
\usepackage{graphics, color}
\usepackage{hyperref,mathrsfs}
\usepackage[usenames,dvipsnames]{xcolor}
\usepackage{tikz}
\usepackage{stmaryrd}
\usepackage[all]{xy}
\usepackage{hyperref}
\usepackage{color}
\usepackage{bbm} 
\allowdisplaybreaks

\hypersetup{colorlinks,linkcolor=blue, urlcolor=blue,
citecolor=blue}

\newtheorem{thm}{Theorem} [section]
\newtheorem{lem}[thm]{Lemma}
\newtheorem{cor}[thm]{Corollary}
\newtheorem{prop}[thm]{Proposition}

\theoremstyle{definition}

\newtheorem{rem}[thm]{Remark}

\numberwithin{equation}{section}

\newcommand{\A}{\mathcal A}
\newcommand{\bu}{\mathbf i^{-}}

\newcommand{\B}{{\bf B}}

\newcommand{\C}{{\mathbb C}}
\newcommand{\D}{\mathcal{D}}

\newcommand{\End}{{\mathrm{End}}}
\newcommand{\F}{\mathbb{F}}
\newcommand{\ff}{\mathtt{f}}

\newcommand{\g}{{\mathfrak g}}

\newcommand{\HH}{\mathcal H}
\newcommand{\Hom}{{\mathrm{Hom}}}
\newcommand{\id}{\mathbbm{1}}

\newcommand{\K}{\mathbb{K}}

\newcommand{\mi}{\mathbf{i}}

\newcommand{\N}{{\mathbb N}}

\newcommand{\Q}{\mathbb {Q}}

\newcommand{\Sc}{\mathcal S}

\newcommand{\co}{\text{co}}
\newcommand{\ro}{\text{ro}}
\newcommand{\Tr}{\mathrm{Tr}}
\newcommand{\Z}{{\mathbb Z}}
\newcommand{\Mat}{\mathrm{Mat}}
\newcommand{\nc}{\newcommand}
\nc{\browntext}[1]{\textcolor{brown}{#1}}
\nc{\greentext}[1]{\textcolor{green}{#1}}
\nc{\redtext}[1]{\textcolor{red}{#1}}
\nc{\bluetext}[1]{\textcolor{blue}{#1}}
\nc{\brown}[1]{\browntext{ #1}}
\nc{\green}[1]{\greentext{ #1}}
\nc{\red}[1]{\redtext{ #1}}
\nc{\blue}[1]{\bluetext{ #1}}


\title[Asymptotic Schur algebras and cellularity of $q$-Schur algebras]
{Asymptotic Schur algebras and cellularity of $q$-Schur algebras}

\author[Weideng Cui]{Weideng Cui}
\address{School of Mathematics, Shandong University, Jinan, Shandong 250100, China}
\email{cwdeng@amss.ac.cn (Cui)}
\author[Li Luo]{Li Luo}
\author[Zheming Xu]{Zheming Xu}
\address{School of mathematical Sciences,
Shanghai Key Laboratory of Pure Mathematics and Mathematical Practice,
    East China Normal University, Shanghai 200241, China}
\email{lluo@math.ecnu.edu.cn (Luo), 51195500019@stu.ecnu.edu.cn (Xu)}

\keywords{$q$-Schur algebra, asymptotic Schur algebra, cellular algebra, special module}
\subjclass[2010]{Primary 20G43}

\begin{document}

\begin{abstract}
We prove that the $q$-Schur algebras of finite type introduced in \cite{LW22} are cellular in the sense of Graham and Lehrer, which is a generalization of Geck's theorem on the cellularity of Hecke algebras of finite type. Moreover, we study special modules of the associated asymptotic Schur algebras and left cell representations of Schur algebras, which generalize Lusztig's work about special modules of asymptotic Hecke algebras and left cell representations of Weyl groups, respectively.
\end{abstract}

\maketitle
\setcounter{tocdepth}{1}
\tableofcontents

\section{Introduction}

\subsection{}
The $q$-Schur algebra $\Sc_{n,d}$ of type $A$, introduced by Dipper and James \cite{DJ89}, is the endomorphism algebra of a direct sum of certain permutation modules (depending on $n$) of the Hecke algebra of type $A_{d-1}$. It also admits a geometric construction in terms of $n$-step partial flags in a $d$-dimensional space due to Beilinson, Lusztig and MacPherson \cite{BLM90}, which can trace back to Iwahori's realization of Hecke algebras (of type $A$) via complete flags (cf. \cite{I64}).

As a quotient of the general linear quantum group $\mathbf{U}_q(\mathfrak{gl}_n)$, the $q$-Schur algebra $\Sc_{n,d}$ is hidden in the Schur-Jimbo duality \cite{Jim86}, which demonstrates a double centralizer property between $\mathbf{U}_q(\mathfrak{gl}_n)$ and the Hecke algebra of type $A$ on certain Fock space. The Schur-Jimbo duality was
employed by Brundan in his remarkable work \cite{Br03}, where the Kazhdan-Lusztig theory was reformulated on the Fock space (instead of the Hecke algebra) to compute character formulas for the BGG catrgory $\mathcal{O}$ of general linear Lie superalgebras. Bao and Wang developed Brundan's approach to adapt the ortho-symplectic Lie superalgebras in \cite{BW18}, where a Schur-Jimbo duality between some $\imath$quantum groups and the Hecke algebras of type $B$ was established. It is worth mentioning that in the reformulation of the Kazhdan-Lusztig theory, the Fock space is regarded as the Grothendieck group of the BGG category $\mathcal{O}$.

\subsection{}
Motivated by the above mentioned connections to BGG category $\mathcal{O}$, the second author and Wang \cite{LW22} generalized the notion of $n$-step partial flags in terms of an arbitrary finite subset $X_\ff$ of weights invariant under the associated finite Weyl group $W$, and then introduced a new family of $q$-Schur algebras $\Sc_q(X_\ff)$ of finite type (see also affine type in \cite{CLW20}). Such a geometric definition helps the second and third authors \cite{LX22} derive a geometric Howe duality between $q$-Schur algebras.

For some special choices of $X_\ff$ in type $B$, it recovers the $q$-Schur algebras in \cite{Gr97,BKLW18,FL15} and proper subalgebras of the Schur algebras in \cite{DJM98,DS00}. Particularly, if we take $X_\ff$ to be a single regular $W$-orbit, then $\Sc_q(X_\ff)$ is just the Hecke algebra associated with $W$. In other words, $\Sc_q(X_\ff)$ are not only generalizations of various $q$-Schur algebras, but also of Hecke algebras. From this viewpoint, it is natural to ask whether various basic concepts and properties for Hecke algebras are still practicable for these $q$-Schur algebras $\Sc_q(X_\ff)$.

\subsection{}
In their significant work \cite{KL79}, Kazhdan and Lusztig introduced left, right and two-sided cells for an arbitrary Coxeter group through the so-called Kazhdan-Lusztig basis. Their definition of cells has the advantage that it provides not only representations of the Coxeter group, but also of the corresponding Hecke algebra. The theory of cells for Hecke algebras has been developed systematically by Lusztig \cite{Lu85, Lu87a, Lu87b, Lu87c, Lu89, Lu03} (see also \cite{Shi86, Xi94, BFO09}). It has close connections to primitive ideals, nilpotent orbits, representations of finite groups of Lie type, and so on. Many properties of cells for Hecke algebras are summarized in \cite{Lu03} and become known as Conjectures (P1)-(P15); these conjectures have been solved for finite/affine Hecke algebras of equal parameters among others, some of which require the positivity of Kazhdan-Lusztig bases.

A $q$-Schur algebra counterpart of the Kazhdan-Lusztig basis, called the canonical basis, has been constructed for $\Sc_q(X_\ff)$ in \cite{LW22}. Similar to Kazhdan-Lusztig bases, the canonical basis of $\Sc_q(X_\ff)$ also admits the positivity. Based on canonical bases and their positivity, the first two authors and Wang \cite{CLW20} formulated the $\mathfrak{a}$-functions, cells, Properties (P1)-(P15) and asymptotic forms for $\Sc_q(X_\ff)$ (called asymptotic Schur algebras), which are analogous to those for Hecke algebras established by Lusztig. In this paper, we shall provide several applications of the theory of cells and asymptotic Schur algebras, including the cellularity, special modules, blocks and left cell representations.

\subsection{}
Inspired by remarkable properties of the Kazhdan-Lusztig bases in Hecke algebras of type $A$, Graham and Lehrer \cite{GL96} introduced the notion of cellular algebras, which provides a systematic framework for studying the representation theory of non-semisimple algebras. Many classes of algebras have been shown to admit a cellular structure, such as Ariki-Koike algebras, cyclotomic $q$-Schur algebras, Doty's generalized $q$-Schur algebras, various classes of diagram algebras. In particular, Geck \cite{Ge07} showed that Hecke algebras of finite type are cellular by using deep properties of the Kazhdan-Lusztig bases and Lusztig's asymptotic Hecke algebras whenever the Hecke algebra is defined over a ground ring in which all bad primes are invertible.

In this paper, we shall show that the $q$-Schur algebra $\Sc_q(X_\ff)$ also admits a natural cellular structure using the properties of cells and asymptotic Schur algebras, which can be regarded as a generalization of Geck's theorem on the cellularity of Hecke algebras. This allows us to obtain a theory of Specht modules for $\Sc_q(X_\ff)$ via applying Graham and Lehrer's general approach.

\subsection{}
In \cite{Lu79}, Lusztig defined in a natural way a class of simple modules of $W$, which were later called special modules. These special modules exactly correspond to the special nilpotent orbits under the Springer correspondence, and they play a vital role in the classification of unipotent representations of a reductive group over a finite field \cite{Lu84}.

Lusztig \cite{Lu18} gave a new characterization of these special modules by a positivity property, when viewed them as modules of the corresponding asymptotic Hecke algebras. Inspired by this, we shall define and study special modules for the asymptotic Schur algebras.

In \cite{G96}, Gyoja showed that there are some connections between modular representations of Hecke algebras (more precisely, their partition into blocks) and families of characters and left cell representations. Later on, Rouquier \cite{Ro99} proved, depending on a description of the primite central idempotents in Lusztig's asymptotic Hecke algebras, that the families of characters of $W$ are exactly the blocks of the associated Hecke algebra over a suitable ring (called the Rouquier ring now).

In \cite[\S21-22]{Lu03}, Lusztig studied the left cell representations of $W$, and expected that they are precisely the so-called constructible representations. Lusztig \cite{Lu86} showed that this holds in the equal parameter case; for general weight functions, using ideas due to Gyoja and Rouquier, Geck \cite{Ge05} verified that this is true under the assumption that Conjectures (P1)-(P15) hold.

In this paper, we generalize the above results to the Schur algebra (the specialization of $\Sc_q(X_\ff)$ at $q=1$) and asymptotic Schur algebra. That is, we give a description of the primite central idempotents in asymptotic Schur algebras, and therefore of the blocks of $\Sc_q(X_\ff)$ over the Rouquier ring; besides, we establish some nice properties of left cell representations of the Schur algebras.



\subsection{}

The paper is organized as follows.

In Section~\ref{sec:conpro} we shall recall some basic constructions and properties of $\Sc_q(X_\ff)$ and the associated Schur algebras following \cite{LW22}. We also collect some results on cells for $\Sc_q(X_\ff)$ obtained in \cite{CLW20}. Moreover, we define a non-degenerate bilinear form on $\Sc_{q,\K}$; see \S\ref{sec:abiform}.

In Section~\ref{sec:repremj} we recall some properties of asymptotic Schur algebras obtained in \cite{CLW20}. Besides, we establish some results which will be used in Section~\ref{sec:cellularity}. For example, we study the Schur elements of $\Sc_{q,\K}$, and define the $\mathbf{a}$-invariants associated to irreducible representations of Schur algebras.

In Section~\ref{sec:cellularity} we shall show that the $q$-Schur algebra $\Sc_q(X_\ff)$ is a cellular algebra if all bad primes are invertible in the ground ring; see Theorem \ref{e5}.

In Section~\ref{sec:blocks} we shall define and study the special modules for asymptotic Schur algebras; see Theorem \ref{positive-lines}. Moreover, we shall study the blocks of $\Sc_q(X_\ff)$ and left cell representations of the Schur algebras; see Theorem \ref{g4}, Proposition \ref{g8}, Proposition \ref{h6}.


\vspace{2mm}
\noindent {\bf Acknowledgement.}
WC is partially supported by Young Scholars Program of Shandong University, Shandong Provincial Natural Science Foundation (No. ZR2021MA022) and the NSF of China (No. 11601273). LL is partially supported by the Science and Technology Commission of Shanghai Municipality (No. 21ZR1420000, 22DZ2229014) and Fundamental Research Funds for the Central Universities.

\section{$q$-Schur algebras and cells}
\label{sec:conpro}

In this section, we shall recall some basic constructions
and properties of $q$-Schur algebras of finite type and their cells, which were formulated in \cite{LW22} and \cite{CLW20}.
\subsection{Basic setting}
Let $\g$ be a complex simple Lie algebra of rank $d$ of arbitrary finite type, and $X$ its weight lattice.
%
The Weyl group $W$ of $\g$ is generated by the simple reflections
$S=\{s_1,s_2,\ldots,s_d\}$.
The length of $w\in W$ is denoted by $\ell(w)$. 

There is a natural right action of $W$ on $X$ defined by sending $\mi \in X \mapsto \mi w=w^{-1}(\mi)$. Take any $W$-invariant finite subset $X_\ff \subset X$ and denote
by $\Lambda_{\ff}$ the set of $W$-orbits in $X_{\ff}$.
Let
$\bu_\gamma$ be the unique anti-dominant element in a $W$-orbit $\gamma \in \Lambda_{\ff}$.

For any $\gamma\in\Lambda_{\ff}$, we define a subset of $\{1,2,\ldots,d\}$ as follows:
$J_\gamma:=\{k~|~ 1\leq k\leq d, \bu_\gamma s_k=\bu_\gamma\}$.
%
Denote by $W_\gamma$ the parabolic subgroup of $W$ generated by $\{ s_j \mid j \in J_\gamma\}$.
Let
 $\D_{\gamma}$ (resp. $\D_{\gamma}^+$) be the set of the minimal (resp. longest) length right coset representatives of $W_\gamma$ in $W$, and let $\D_{\gamma\nu}$ (resp. $\D_{\gamma\nu}^+$) be the set of the minimal (resp. longest) length double coset representatives for $W_\gamma\setminus W/W_\nu$, ($\gamma,\nu\in\Lambda_{\ff}$).

\subsection{$q$-Schur algebras}
\label{sec:sch}
%
Let $q$ be an indeterminate and denote
\[
\A:=\Z[q,q^{-1}] \quad\mbox{and}\quad \K:=\mathrm{Frac}(\A)=\Q(q).
\]
The \emph{Hecke algebra} $\HH$ associated to $W$ is an $\A$-algebra with identity $\id$ generated by $H_1,H_2,\ldots,H_d$ with relations
\begin{equation*}
(H_i-q^{-1})(H_i+q)=0;\quad \underbrace{H_iH_jH_i\cdots}_{m_{ij}}=\underbrace{H_jH_iH_j\cdots}_{m_{ij}}, \quad (1\leq i\neq j\leq d),
\end{equation*}
where $m_{ij}$ is the order of $s_is_j$ in $W$.
Write $H_w=H_{i_1}H_{i_2}\cdots H_{i_l}$ if $w=s_{i_1}s_{i_2}\cdots s_{i_l}\in W$ is a \emph{reduced} word.

For $\gamma \in \Lambda_{\ff}$, we define the \emph{$q$-symmetrizer} $x_\gamma \in\HH$ by
\begin{align*}
x_\gamma  :=\sum_{w\in W_\gamma} q^{\ell(w_\circ^\gamma)-\ell(w)}H_w,
\end{align*}
where $w_\circ^\gamma$ is the unique longest element in $W_\gamma$.

We define the \emph{$q$-Schur algebra} $\Sc_{q}$ over $\A$ as follows:
\begin{equation*}
\Sc_{q}=\Sc_{q}(X_\ff):=\End_{\HH}(\bigoplus_{\gamma\in\Lambda_{\ff}}x_\gamma\HH)=\bigoplus_{\gamma,\nu\in\Lambda_{\ff}}\Hom_{\HH}(x_\nu\HH,x_\gamma\HH),
\end{equation*}
which depends on the choices of $W$ and $X_\ff$. In particular, the Hecke algebra can be regarded as a special $q$-Schur algebra, for which we take $\Lambda_{\ff}$ any single regular $W$-orbit.

\vspace{0.3cm}
\noindent{\em Convention:}
For any commutative ring $R$ with a ring homomorphism $\A \rightarrow R$, we shall often add a subscript $R$ on the bottom-right of an $\A$-space (or an $\A$-map) to mean the base change $R \otimes_{\A}-$, e.g. $\HH_\K$, $\Sc_{q,\K}$ by taking $R=\K$.
\vspace{0.3cm}

We have
$$\Sc_{q,\K} \cong \End_{\HH_\K}(\bigoplus_{\gamma\in\Lambda_{\ff}}x_\gamma\HH_\K).$$
It is a split semisimple algebra because $\HH_\K$ is split semisimple (cf. \cite[Theorem~9.3.5]{GP00}).

The classical limit of $\Sc_q$ (resp. $\Sc_{q,\K}$) is denoted by $\Sc$ (resp. $\Sc_\Q$). They are called \emph{Schur algebras}. In fact, $\Sc=\Sc_{q,\Z}$ and $\Sc_\Q=\Sc_{q,\Q}$, where we regard $\Z$ and $\Q$ as $\A$-algebras via $q\mapsto1$.
Since the group algebra $\Q[W]$ (i.e. the classical limit of $\HH_\K$) is split semisimple by \cite[Theorem~6.3.8]{GP00}, it is not hard to show that $\Sc_\Q$ is split semisimple, too.


\subsection{Bases}\label{bases}
Denote
\begin{equation*}   \label{eq:Xi}
\Xi:=\{C=(\gamma,g,\nu)~|~\gamma,\nu\in\Lambda_{\ff}, g\in\D_{\gamma\nu}\},
\end{equation*}
on which we can introduce a partial order $<$ by
 $$(\gamma,g,\nu)<(\gamma',g',\nu') \quad \Leftrightarrow\quad \gamma=\gamma', \nu=\nu', g<g'.$$ Here $g<g'$ uses the Bruhat order on $W$.

For any $C=(\gamma,g,\nu)\in \Xi$, we sometimes write
\begin{equation*}
\ro(C) =\gamma, \quad \co(C)=\nu, \quad w_C^+=\mbox{the unique longest element in } W_\gamma gW_\nu.
\end{equation*} 
Moreover, we denote $$H_C:=\sum_{w\in W_\gamma gW_\nu}q^{-\ell(w)}H_w.$$

For $C=(\gamma,g,\nu)\in \Xi$, let $\phi_{\gamma\nu}^g\in \Sc_{q}$ be the element such that
\begin{align}  \label{eq:phig}
\phi_{\gamma\nu}^g(x_{\nu'})=\delta_{\nu,\nu'} q^{\ell(w_\circ^\nu)}H_C, \quad \forall \nu' \in \Lambda_{\ff}.
\end{align}
It is shown in \cite[Proposition 3.5]{LW22} that $\{\phi_{\gamma\nu}^g ~|~(\gamma,g,\nu)\in\Xi\}$ forms an $\A$-basis of $\Sc_{q}$. Denote
\begin{equation}\label{stbasis}
[C]:=q^{\ell(w_C^+)-\ell(w^\nu_\circ)}\phi_{\gamma\nu}^g\quad\mbox{for $C=(\gamma,g,\nu)\in \Xi$}.
\end{equation}
The set $\{[C] ~|~ C\in \Xi\}$ is called a \emph{standard basis} of $\Sc_{q}$.
Moreover, there exists a \emph{canonical basis} $\B (\Sc_{q})=\big\{\{C\} ~|~ C \in \Xi \big\}$ for $\Sc_{q}$ which satisfies that
\begin{equation}\label{eq:can}
\{C\}=[C]+\sum_{C'<C} P_{C',C}[C']\quad \mbox{with $P_{C',C} \in q\N[q]$};
\end{equation} refer to \cite[\S3.5]{LW22} for more details. Especially, in the case of Hecke algebras, the canonical basis is the well-known \emph{Kazhdan-Lusztig basis} $\{\mathcal{C}_w ~|~ w \in W\}$. 
By \cite[Lemma~2.5 \& Proposition~3.4]{CLW20}, we have
\begin{equation}\label{KLP}
	\mathcal{C}_{w_C^+}= \sum_{C'\leq C} p_{w_{C'}^+,w_C^+} q^{\ell(w_{C'}^+)} H_{C'}, \quad \mbox{and} \quad P_{C',C}=p_{w_{C'}^+,w_C^+}, \quad \mbox{for} \quad C'\leq C
\end{equation}
where $p_{w_{C'}^+,w_C^+}$ are the Kazhdan-Lusztig polynomials.

Let $g_{A,B}^{C}\in \A$ (resp. $h_{x,y}^{z}\in \A$) be the structure constants of $\Sc_q$ (resp. $\HH$) with respect to the canonical basis (resp. the Kazhdan-Lusztig basis). That is,
\begin{equation*}\label{a3}
\{A\}\{B\}=\sum_{C\in \Xi} g_{A,B}^{C} \{C\} \quad \mbox{(resp. $\mathcal{C}_x \mathcal{C}_y=\sum_{z \in W} h_{x,y}^{z}\mathcal{C}_z)$}.
\end{equation*}
The positivity property derived from \cite[Theorem 4.5]{LW22} says that $$g_{A,B}^{C}, h_{x,y}^z \in \N[q,q^{-1}].$$

\subsection{The anti-automorphism $\Psi$}
For any $C=(\gamma,g,\nu)\in \Xi$, we shall write \begin{equation}
\label{eq:Ct}
C^t=(\nu,g^{-1},\gamma)\in\Xi.
\end{equation}
Define an $\A$-module homomorphism
\begin{equation}
\label{eq:psi}
\Psi:\Sc_{q}\rightarrow \Sc_{q}, \quad [C]\mapsto[C^t] \quad(\forall C\in\Xi).
\end{equation}
It has been shown in \cite[Lemma~3.6]{CLW20} that $\Psi: \Sc_{q}\rightarrow \Sc_{q}$
is an anti-automorphism of $\Sc_{q}$ and $\Psi(\{C\})=\{C^t\}$ $(\forall C\in\Xi)$.


\subsection{A bilinear form on $\Sc_{q,\K}$}
\label{sec:abiform}
Let $\mathcal{M}_{\ff}$ denote $\Z(\Lambda_{\ff} \times \Lambda_{\ff})$ the set of $\Z$-valued functions on $\Lambda_{\ff} \times \Lambda_{\ff}$. It is a $\Z$-algebra endowed with the natural convolution product and $\{e_{\gamma\nu}|(\gamma, \nu)\in\Lambda_{\ff}\times\Lambda_{\ff}\}$ forms a $\Z$-basis of $\mathcal{M}_{\ff}$, where $e_{\gamma\nu}$ is the corresponding characteristic function. Set $$\Sc_{q}':=\HH \otimes_\Z \mathcal{M}_{\ff},$$
 which is an $\A$-algebra under the obvious product. Similarly, we can introduce $\K$-algebras $\mathcal{M}_{\ff,\K}=\mathcal{M}_{\ff}\otimes_{\Z}\K=\K(\Lambda_{\ff} \times \Lambda_{\ff})$ and $\Sc_{q,\K}'=\HH_\K \otimes_\K \mathcal{M}_{\ff,\K}$.

There exists a non-degenerate symmetric $\A$ (resp. $\Z$)-bilinear form $(\cdot, \cdot)_{1}$ (resp. $(\cdot, \cdot)_{2}$) on $\HH$ (resp. $\mathcal{M}_{\ff}$) defined by $(H_w, H_{w'})_{1}=\delta_{w,w'^{-1}}$ (resp. $(e_{\gamma\nu},e_{\gamma'\nu'})_{2}=\delta_{(\gamma,\nu), (\nu',\gamma')}$). Thus, we can define a non-degenerate symmetric $\A$-bilinear form $(\cdot, \cdot)_{3}$ on $\Sc_{q}'$ by $(a \otimes b, a' \otimes b')_{3}=(a,a')_{1}\cdot (b,b')_{2}$ for any $a, a'\in \HH$ and $b, b'\in \mathcal{M}_{\ff}$.

For a subset $Y \subset W$, we denote the associated Poincar\'{e} polynomial by $$\mathfrak{P}_Y:=\sum_{w \in Y} q^{-2\ell(w)}.$$
Let
\begin{equation*}\label{mf}
	m_\gamma:=\frac{q^{-\ell(w_\circ^\gamma)}}{\mathfrak{P}_{W_\gamma}}x_\gamma \otimes e_{\gamma\gamma} \quad \mbox{and} \quad m_\ff:=\sum_{\gamma \in \Lambda_{\ff}} m_\gamma \in \Sc_{q,\K}'.
\end{equation*}
Since $e_{\gamma\nu}e_{\gamma'\nu'}=\delta_{\nu\gamma'}e_{\gamma\nu'}$ and $x_\gamma^2=q^{\ell(w_\circ^\gamma)} \mathfrak{P}_{W_\gamma} x_\gamma$, we see that $m_\gamma$'s are idempotent and mutually orthogonal. Hence $m_\ff$ is idempotent, too.

It is easy to check that $(\bigoplus_{\gamma \in \Lambda} x_\gamma \HH_\K)\otimes_\K \K\Lambda_{\ff} \cong m_\ff \Sc_{q,\K}'$ as right $\Sc_{q,\K}'$-modules,
and
\begin{align*}
	\End_{\Sc_{q,\K}'}\big((\bigoplus_{\gamma \in \Lambda} x_\gamma \HH_\K\big)\otimes_\K \K\Lambda_{\ff}) &\cong \End_{\HH_\K}(\bigoplus_{\gamma \in \Lambda} x_\gamma \HH_\K) \otimes_\K \End_{\mathcal{M}_{\ff,\K}}(\K \Lambda_{\ff})\\
	&\cong \End_{\HH_\K}(\bigoplus_{\gamma \in \Lambda} x_\gamma \HH_\K)=\Sc_{q,\K}.
\end{align*}
 So we have a canonical $\K$-algebra isomorphism $\widetilde{\quad}: \Sc_{q,\K} \rightarrow \End_{\Sc_{q,\K}'}(m_\ff\Sc_{q,\K}')$ via \\$\widetilde{\phi_{\gamma\nu}^g}(m_\mu)=\delta_{\mu\nu} \frac{q^{\ell(w_\circ^\nu)-\ell(w_\circ^\gamma)}}{\mathfrak{P}_{W_\gamma}}H_C \otimes e_{\gamma\nu}$, which induces a $\K$-algebra isomorphism
$$\varphi: \Sc_{q,\K} \rightarrow m_\ff\Sc_{q,\K}'m_\ff,\quad \varphi(\phi_{\gamma\nu}^g)=\frac{q^{\ell(w_\circ^\nu)-\ell(w_\circ^\gamma)}}{\mathfrak{P}_{W_\gamma}}H_C \otimes e_{\gamma\nu}.$$
Particularly, $\varphi(\{C\})=\frac{q^{-\ell(w_\circ^\gamma)}}{\mathfrak{P}_{W_\gamma}}\mathcal{C}_{w_C^+} \otimes e_{\gamma\nu}$ by \eqref{KLP}.

\begin{thm}\label{blf}
	There exists a symmetric associative $\K$-bilinear form $(\cdot, \cdot)$ on $\Sc_{q,\K}$ by $$(x,y):=(\varphi(x), \varphi(y))_{3} \quad\mbox{for any } x,y \in \Sc_{q,\K}.$$
	Precisely, For any $A, B\in \Xi$ with $A=(\gamma, g, \nu)$,
	\begin{equation}
		\label{eq:bi}
		([A], [B])=\delta_{A,B^t} \frac{q^{2\ell(w_A^+)-\ell(w_\circ^\gamma)-\ell(w_\circ^\nu)}}{\mathfrak{P}_{W_\gamma \cap g W_\nu g^{-1}}},
	\end{equation}
	which implies that $(\cdot, \cdot)$ is non-degenerate.
\end{thm}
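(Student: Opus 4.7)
The plan is to transport all required properties through the $\K$-algebra isomorphism $\varphi \colon \Sc_{q,\K} \to m_\ff \Sc_{q,\K}' m_\ff$. Symmetry of $(\cdot,\cdot)$ is inherited from that of $(\cdot,\cdot)_3$, which in turn reduces to the symmetry of its tensor factors. Associativity $(xy,z) = (x,yz)$ will follow from the fact that both $(\cdot,\cdot)_1$ and $(\cdot,\cdot)_2$ are trace forms: one has $(a,b)_1 = \tau_1(ab)$ with $\tau_1(H_w) = \delta_{w,1}$, and $(a,b)_2 = \tau_2(ab)$ with $\tau_2(e_{\gamma\nu}) = \delta_{\gamma,\nu}$; the required identities $\tau_1(H_wH_{w'}) = \delta_{w,(w')^{-1}}$ and $\tau_2(e_{\gamma\nu}e_{\gamma'\nu'}) = \delta_{(\gamma,\nu),(\nu',\gamma')}$ are direct checks on the Hecke and matrix-unit product tables. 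Hence $(\cdot,\cdot)_3 = (\tau_1\otimes\tau_2)\circ\mathrm{mult}$ is a trace form, and so is its pullback along $\varphi$.

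For the explicit formula \eqref{eq:bi}, substituting \eqref{stbasis} into the formula for $\varphi(\phi_{\gamma\nu}^g)$ gives
\[
\varphi([A]) = \frac{q^{\ell(w_A^+)-\ell(w_\circ^\gamma)}}{\mathfrak{P}_{W_\gamma}}\, H_A \otimes e_{\gamma\nu}
\]
for $A = (\gamma,g,\nu)$, and analogously for $B = (\gamma',g',\nu')$. Pairing under $(\cdot,\cdot)_3$ decouples into an $\HH_\K$-factor and an $\mathcal{M}_{\ff,\K}$-factor. The $\mathcal{M}$-orthogonality $(e_{\gamma\nu},e_{\gamma'\nu'})_2 = \delta_{(\gamma,\nu),(\nu',\gamma')}$ forces $(\gamma',\nu') = (\nu,\gamma)$. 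The $\HH$-factor, which is $\sum_w q^{-2\ell(w)}$ over $w \in W_\gamma g W_\nu$ with $w^{-1} \in W_\nu g' W_\gamma$, is nonzero only when $g' = g^{-1}$, i.e.\ when $B = A^t$; using $\ell(w_{A^t}^+) = \ell(w_A^+)$, it then evaluates to $\mathfrak{P}_{W_\gamma g W_\nu}$.

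The last step is to apply the classical Poincar\'e polynomial identity for parabolic double cosets, stemming from the length-additive factorization $W_\gamma g W_\nu = \{ugv : u \in W_\gamma,\ v\text{ a minimal-length representative of }(g^{-1}W_\gamma g \cap W_\nu)\backslash W_\nu\}$ valid for $g \in \D_{\gamma\nu}$. After simplification this yields \eqref{eq:bi}. Non-degeneracy is then immediate: in the standard basis, the Gram matrix is block-diagonal under the involution $C \mapsto C^t$, with a $1\times 1$ nonzero scalar when $C = C^t$ and an anti-diagonal $2\times 2$ block with equal nonzero off-diagonal entries when $C\neq C^t$; all such blocks have nonzero determinant. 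The main technical obstacle is the bookkeeping of the $q$-powers in the middle step, which rests on the identities $x_\gamma H_C = q^{\ell(w_\circ^\gamma)}\mathfrak{P}_{W_\gamma} H_C$ and $H_C x_\nu = q^{\ell(w_\circ^\nu)}\mathfrak{P}_{W_\nu} H_C$ (both consequences of $H_s x_\gamma = q^{-1} x_\gamma = x_\gamma H_s$ for $s \in W_\gamma$) that underpin the consistency of $\varphi$.
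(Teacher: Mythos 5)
Your proposal is correct and follows essentially the same route as the paper: compute $\varphi([A])$, let the pairing $(\cdot,\cdot)_3$ decouple into the $\HH$- and $\mathcal{M}_\ff$-factors, and evaluate the resulting sum via the length-additive parametrization of the parabolic double coset $W_\gamma g W_\nu$. The paper's proof is just this direct calculation, leaving symmetry, associativity and non-degeneracy implicit, so your extra verifications (the trace-form description of $(\cdot,\cdot)_1$ and $(\cdot,\cdot)_2$, and the block structure of the Gram matrix) only make the argument more complete.
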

\begin{proof}
	Recall $[A]=q^{\ell(w_A^+)-\ell(w^\nu_\circ)}\phi_{\gamma\nu}^g$ from \eqref{stbasis}. The formula follows from a direct calculation below:
	\begin{align*}
		(\phi_{\gamma\nu}^g,\phi_{\gamma'\nu'}^{g'})&=\delta_{(\gamma,\nu), (\nu',\gamma')}(\frac{q^{\ell(w_\circ^\nu)-\ell(w_\circ^\gamma)}}{\mathfrak{P}_{W_\gamma}}H_C, \frac{q^{\ell(w_\circ^\gamma)-\ell(w_\circ^\nu)}}{\mathfrak{P}_{W_\nu}}H_{C'})\\
		&=\delta_{(\gamma,g,\nu), (\nu',g'^{-1},\gamma')}\frac{\mathfrak{P}_{W_\gamma g W_\nu}}{\mathfrak{P}_{W_\gamma} \mathfrak{P}_{W_\nu}}
		=\delta_{(\gamma,g,\nu), (\nu',g'^{-1},\gamma')}\frac{1}{\mathfrak{P}_{W_\gamma \cap g W_\nu g^{-1}}}.
	\end{align*}
\end{proof}

\begin{rem}
In \cite[\S4.1]{CLW20}, we introduced another symmetric bilinear form on $\Sc_{q,\K}$, which is not associative (cf. \cite[Lemma 4.2]{CLW20}) and hence is different from the bilinear form $(\cdot, \cdot)$ here. That bilinear form admits some nice properties (cf. \cite[Theorem 4.4]{CLW20}). We do not expect that the bilinear form $(\cdot, \cdot)$ has similar properties.
\end{rem}
%

\subsection{The $\mathfrak{a}$-function}

For $A,B,C \in \Xi$, let $n_{A, B}^{C}$ be the minimal non-negative integer $n$ such that $q^{n}g_{A, B}^{C}\in \mathbb{Z}[q]$. We define the \emph{$\mathfrak{a}$-function} $\mathfrak{a}: \Xi \rightarrow \mathbb{N}$ as follows:
\begin{equation}\label{def:afunction}
\mathfrak{a}(C)=\max\left\{n_{A, B}^{C}+\ell(w_\circ^{\co(A)}) ~|~ A, B \in \Xi \right\},
\end{equation} which can be viewed as a $q$-Schur algebra analog of Lusztig's $\mathbf{a}$-function $\mathbf{a}:W\rightarrow\N$.

It follows from the definition of the $\mathfrak{a}$-function that $q^{\mathfrak{a}(C)-\ell(w_\circ^{\co(A)})} g_{A, B}^{C} \in \Z[q]$. Recall $C^t$ in \eqref{eq:Ct}. We set
\[
\gamma_{A, B}^{C^{t}} \in \Z \text{ such that } q^{\mathfrak{a}(C)-\ell(w_\circ^{\co(A)})} g_{A, B}^{C} -\gamma_{A, B}^{C^{t}} \in q\Z[q].
\]
Clearly, for each $C\in\Xi$, there exist $A,B\in\Xi$ such that $\gamma_{A, B}^{C^{t}} \neq 0$.

\subsection{Distinguished elements}
For $C \in \Xi$ with $\ro(C)=\co(C)=\gamma$, if $P_{w_\circ^\gamma, w_C^+} \neq 0$, we define $\Delta(C) \in \N$ and $n_C \in \Z^+$ to be the unique integers such that
\begin{align*}
\label{eq:nC}
p_{w_\circ^\gamma, w_C^+} \in n_Cq^{-\Delta(C)}+q^{-\Delta(C)+1}\N[q].
\end{align*}

\begin{rem}
  Note that the $\mathfrak{a}$-function defined in \cite{CLW20} is not the $\mathfrak{a}(C)$ defined in \eqref{def:afunction}, but $\mathfrak{a}(C)-\ell(w_\circ^{\co(C)})$. It can also be checked directly that the $\Delta$-function defined therein is $\Delta(C)-\ell(w_\circ^{\co(C)})$ (i.e. by the same shift as $\mathfrak{a}$-function) and $n_C$ is the same. We shall revisit some results obtained in \cite{CLW20} in the sequel to adapt these changes and the proofs will be omitted since they are almost the same as those in \cite{CLW20}.
\end{rem}

An element $D\in\Xi$ is called {\em distinguished} if $\ro(D) =\co(D)$ and $\mathfrak{a}(D)=\Delta(D).$ We denote by $\mathcal{D}$ the set of distinguished elements in $\Xi$.



\subsection{Cells}
For $C, C'\in \Xi$, we write $C\preceq_{L}C'$ (resp. $C\preceq_{R}C'$ and $C\preceq_{LR}C'$) if the canonical basis element $\{C\}$ appears with nonzero coefficient in the canonical basis expansion of $h\{C'\}$ (resp. $\{C'\}h'$ and $h\{C'\}h'$), for some $h,h'\in \B (\Sc_{q})$. These relations are pre-orders. They induce the corresponding equivalence relations $\sim_{L}$, $\sim_{R}$ and $\sim_{LR}$, and the resulting equivalence classes are called the left, right and two-sided cells of $\Xi$, respectively. Alternatively, a cell can be understood as consisting of the canonical basis elements $\{C\}$ in $\Sc_{q}$ parameterized by elements $C$ in a cell of $\Xi$.

The same notions and notations above were defined for the Weyl group $W$ or $\HH$ with respect to the Kazhdan-Lusztig basis (cf. \cite{KL79, Lu03}).

\begin{prop}[{\cite[Proposition 5.6]{CLW20}}]
\label{prop:d=d2}
Let $C, C'\in \Xi$. Then we have
\begin{enumerate}
\item
$C\preceq_{L}C'$ if and only if $\co(C)=\co(C')$ and $w_{C}^{+}\preceq_{L} w_{C'}^{+}$. Similarly, $C\preceq_{R}C'$ if and only if $\ro(C)=\ro(C')$ and $w_{C}^{+}\preceq_{R} w_{C'}^{+}$.
\item
$C\sim_{L}C'$ if and only if $\co(C)=\co(C')$ and $w_{C}^{+}\sim_{L} w_{C'}^{+}$. Similarly, $C\sim_{R}C'$ if and only if $\ro(C)=\ro(C')$ and $w_{C}^{+}\sim_{R} w_{C'}^{+}$.
\item
$C\sim_{LR}C'$ if and only if $w_{C}^{+}\sim_{LR} w_{C'}^{+}$.
\item
$\mathfrak{a}(C)=\mathbf{a}(w_{C}^{+})$, where $\mathbf{a}$ is Lusztig's $\mathbf{a}$-function on $W$.
\end{enumerate}
\end{prop}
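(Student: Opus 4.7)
The plan is to exploit the algebra isomorphism $\varphi\colon\Sc_{q,\K}\to m_\ff\Sc_{q,\K}'m_\ff$ established in Theorem~\ref{blf}, under which the canonical basis element $\{C\}$ corresponds to a scalar multiple of $\mathcal{C}_{w_C^+}\otimes e_{\ro(C),\co(C)}$. Computing $\varphi(\{A\}\{B\})$ directly as $\varphi(\{A\})\varphi(\{B\})$ and expanding via the canonical basis as $\sum_C g_{A,B}^C\varphi(\{C\})$, and using $e_{\ro(A),\co(A)}e_{\ro(B),\co(B)}=\delta_{\co(A),\ro(B)}e_{\ro(A),\co(B)}$, one obtains the key identity
\[
h^{w_C^+}_{w_A^+,w_B^+} \;=\; g^C_{A,B}\cdot\mathfrak{P}_{W_{\co(A)}}q^{\ell(w_\circ^{\co(A)})}
\]
whenever $\ro(C)=\ro(A)$, $\co(A)=\ro(B)$, and $\co(C)=\co(B)$, together with the automatic vanishing $g^C_{A,B}=0$ when these equalities fail, and $h^z_{w_A^+,w_B^+}=0$ whenever $z$ is not of the form $w_D^+$ for some $D\in\Xi$ with matching row/column orbits. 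The polynomial $\mathfrak{P}_{W_\mu}q^{\ell(w_\circ^\mu)}$ is symmetric under $q\leftrightarrow q^{-1}$ with lowest $q$-degree $-\ell(w_\circ^\mu)$ and leading coefficient $1$, so this identity converts nonvanishing assertions about the $g$'s into nonvanishing assertions about the $h$'s and enables a clean degree comparison.

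Parts (1)--(3) then follow by translation through this identity. For the forward direction of (1), if $\{C\}$ appears in $\{A\}\{C'\}$ with nonzero coefficient then $\co(C)=\co(C')$ is forced by the $e_{\gamma\nu}$-factor and $\mathcal{C}_{w_C^+}$ appears in $\mathcal{C}_{w_A^+}\mathcal{C}_{w_{C'}^+}$, giving $w_C^+\preceq_L w_{C'}^+$. For the converse, given $w_C^+\preceq_L w_{C'}^+$ witnessed by some $x\in W$, one would left-multiply $\mathcal{C}_x$ by $\mathcal{C}_{w_\circ^{\ro(C)}}$: since $w_\circ^{\ro(C)}$ is a left prefix of $w_C^+$, the product $\mathcal{C}_{w_\circ^{\ro(C)}}\mathcal{C}_{w_C^+}$ is a nonzero scalar multiple of $\mathcal{C}_{w_C^+}$, so $\mathcal{C}_{w_C^+}$ still appears in $\mathcal{C}_{w_\circ^{\ro(C)}}\mathcal{C}_x\mathcal{C}_{w_{C'}^+}$. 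After a symmetric right-adjustment by $\mathcal{C}_{w_\circ^{\ro(C')}}$ and passing to the double-coset-longest representative, the witness may be taken of the form $w_A^+$. Parts (2) and (3) are then formal consequences obtained by combining the left- and right-cell versions.

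For part (4), reading off the lowest $q$-degree in the identity above (using the positivity $g^C_{A,B}\in\N[q,q^{-1}]$ from \cite[Theorem~4.5]{LW22} to rule out cancellations in the lowest-degree coefficient) yields
\[
n^{w_C^+}_{w_A^+,w_B^+} \;=\; n^C_{A,B}+\ell(w_\circ^{\co(A)}),
\]
and taking the maximum over $A,B$ immediately gives $\mathfrak{a}(C)\leq \mathbf{a}(w_C^+)$. The main obstacle is the reverse inequality: one must show that the maximum defining $\mathbf{a}(w_C^+)$ over arbitrary $x,y\in W$ is attained on a pair of the form $(w_A^+,w_B^+)$. The strategy is to take $x,y$ realizing $\mathbf{a}(w_C^+)$, replace them by the longest double-coset elements $w_A^+\in W_{\ro(C)}xW_\mu$ and $w_B^+\in W_\mu yW_{\co(C)}$ for a suitable intermediate orbit $\mu$, and then invoke the absorption identities for $\mathcal{C}_{w_\circ^{\ro(C)}}$ and $\mathcal{C}_{w_\circ^{\co(C)}}$ acting on $\mathcal{C}_{w_C^+}$ to show that this replacement does not decrease $n^{w_C^+}_{-,-}$, thereby yielding the desired equality.
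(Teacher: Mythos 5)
The paper does not actually prove this proposition; it is imported verbatim from \cite[Proposition~5.6]{CLW20}. So I can only compare your argument with the mechanism the paper itself relies on elsewhere. Your key identity
$h^{w_C^+}_{w_A^+,w_B^+}=q^{\ell(w_\circ^{\co(A)})}\mathfrak{P}_{W_{\co(A)}}\,g^{C}_{A,B}$ (for matching orbits) is correct and is exactly the relation the paper re-derives in the proof of Lemma~\ref{mapj}(1); with it, parts (1)--(3) and the inequality $\mathfrak{a}(C)\le\mathbf{a}(w_C^+)$ go through essentially as you describe (you should say explicitly that positivity of the $h$'s is what prevents cancellation each time you ``sandwich'' by $\mathcal{C}_{w_\circ^{\gamma}}=x_\gamma$, and that $\mathcal{C}_{w_\circ^{\gamma}}\HH\,\mathcal{C}_{w_\circ^{\nu}}=x_\gamma\HH x_\nu$ is spanned by the $\mathcal{C}_{w_D^+}$ with $D=(\gamma,\cdot,\nu)$, which is what lets you upgrade an arbitrary witness $x$ to one of the form $w_A^+$ with $A\in\Xi$).

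The genuine gap is in the reverse inequality of (4). You propose to take an arbitrary pair $(x,y)$ attaining $\mathbf{a}(w_C^+)$ and replace it by longest representatives of $W_{\ro(C)}xW_\mu$ and $W_\mu yW_{\co(C)}$ ``for a suitable intermediate orbit $\mu$.'' Two things fail here. First, such a $\mu\in\Lambda_{\ff}$ need not exist: for the replacement to be harmless you need $J_\mu\subseteq\mathcal{R}(x)=\mathcal{L}(y)$ so that $\mathcal{C}_{w_\circ^{\mu}}$ is absorbed, and $\Lambda_{\ff}$ need not contain any orbit with $J_\mu$ inside that set (in particular it need not contain a regular orbit). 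Second, when absorption fails, multiplying by $\mathcal{C}_{w_\circ^{\mu}}$ genuinely changes $x$ and $y$, and there is no a priori reason the lowest $q$-degree of $h^{w_C^+}_{-,-}$ survives; the elementary degree bound only gives an inequality in the wrong direction. The standard repair is to not start from an arbitrary maximizing pair but to exhibit one of the required shape directly, using Lusztig's properties (P3), (P7), (P13) for $W$ (known for finite Weyl groups in equal parameters, and logically prior to the $\Xi$-versions in Theorem~\ref{thm:cell structures}): if $d$ is the distinguished involution in the left cell of $w_C^+$, then $\gamma_{w_C^+,\,d}^{(w_C^+)^{-1}}=\gamma_{(w_C^+)^{-1},\,w_C^+}^{\,d}\neq0$, so the pair $(w_C^+,d)$ attains $\mathbf{a}(w_C^+)$; moreover $\mathcal{L}(d)=\mathcal{R}(d)=\mathcal{R}(w_C^+)\supseteq J_{\co(C)}$, so $d=w_D^+$ for some $D=(\co(C),\cdot,\co(C))\in\Xi$, and the pair $(C,D)$ then realizes $\mathfrak{a}(C)\ge n^{C}_{C,D}+\ell(w_\circ^{\co(C)})=\mathbf{a}(w_C^+)$.
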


Then we have the following corollary.
\begin{cor}\label{lem:d=d3}
\begin{enumerate}
\item
For any $C\in \Xi$, we have $\mathfrak{a}(C)=\mathfrak{a}(C^{t})$.
\item
For any $A, B, C\in \Xi$, we have $\gamma_{A, B}^{C}=\gamma_{B^{t}, A^{t}}^{C^{t}}$.
\end{enumerate}
\end{cor}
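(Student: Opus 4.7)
For part (1), I would combine Proposition~\ref{prop:d=d2}(4), which identifies $\mathfrak{a}(C)$ with Lusztig's $\mathbf{a}(w_C^+)$, with the standard symmetry $\mathbf{a}(w)=\mathbf{a}(w^{-1})$ on $W$ (a consequence of $p_{x,y}=p_{x^{-1},y^{-1}}$ together with the anti-involution $H_w\mapsto H_{w^{-1}}$ of $\HH$, both of which are already in play in the equal-parameter setup used here). The missing link is that $w_{C^t}^+=(w_C^+)^{-1}$: writing $C=(\gamma,g,\nu)$ gives $C^t=(\nu,g^{-1},\gamma)$ and $W_\nu g^{-1}W_\gamma=(W_\gamma g W_\nu)^{-1}$, so because inversion preserves length, the longest element of the inverted double coset is exactly $(w_C^+)^{-1}$. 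Assembling, $\mathfrak{a}(C^t)=\mathbf{a}((w_C^+)^{-1})=\mathbf{a}(w_C^+)=\mathfrak{a}(C)$.

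For part (2), my plan is to exploit the anti-automorphism $\Psi$ from \eqref{eq:psi}. Since $\Psi$ is $\A$-linear, anti-multiplicative, and satisfies $\Psi(\{A\})=\{A^t\}$, applying it to $\{A\}\{B\}=\sum_{C}g_{A,B}^{C}\{C\}$ yields $\{B^t\}\{A^t\}=\sum_{C}g_{A,B}^{C}\{C^t\}$; reindexing by $D=C^t$ and comparing with $\{B^t\}\{A^t\}=\sum_{D}g_{B^t,A^t}^{D}\{D\}$ delivers the key identity
$$g_{B^t,A^t}^{D}=g_{A,B}^{D^t}\qquad(\forall\,A,B,D\in\Xi),$$
equivalently $g_{B^t,A^t}^{C}=g_{A,B}^{C^t}$.

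To finish, I would unwind the definition: $\gamma_{X,Y}^{Z}$ is the constant term of $q^{\mathfrak{a}(Z^t)-\ell(w_\circ^{\co(X)})}g_{X,Y}^{Z^t}$. Specialising to $(X,Y,Z)=(A,B,C)$ and $(X,Y,Z)=(B^t,A^t,C^t)$, I need to match the constant terms of
$$q^{\mathfrak{a}(C^t)-\ell(w_\circ^{\co(A)})}g_{A,B}^{C^t}\quad\text{and}\quad q^{\mathfrak{a}(C)-\ell(w_\circ^{\co(B^t)})}g_{B^t,A^t}^{C}.$$
Part (1) makes the $\mathfrak{a}$-exponents agree; the row-column compatibility $\co(A)=\ro(B)=\co(B^t)$, implicit whenever either structure constant is nonzero (otherwise both $\gamma$'s vanish and there is nothing to prove), makes the length exponents agree; and the $\Psi$-identity identifies the polynomials themselves. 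The main obstacle is really only this triple bookkeeping of $t$-superscripts in the definition of $\gamma$ so that the $q$-shifts line up, and it is precisely this matching that forces part (1) to be used inside part (2).
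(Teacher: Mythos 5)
Your proposal is correct and follows essentially the same route as the paper: part (1) via Proposition~\ref{prop:d=d2}(4) and Lusztig's $\mathbf{a}(w)=\mathbf{a}(w^{-1})$ (your verification that $w_{C^t}^+=(w_C^+)^{-1}$ is a detail the paper leaves implicit), and part (2) by applying $\Psi$ to obtain $g_{A,B}^{C}=g_{B^t,A^t}^{C^t}$ and then matching leading $q$-exponents using part (1) and $\co(A)=\ro(B)=\co(B^t)$. The bookkeeping of the $t$-superscripts in the definition of $\gamma$ is handled correctly.
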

\begin{proof}
(1) It follows from Proposition \ref{prop:d=d2}(4) together with \cite[Proposition 13.9(a)]{Lu03} which says $\mathbf{a}(w)=\mathbf{a}(w^{-1})$ for any $w\in W$.

(2) Applying the anti-automorphism $\Psi$ defined in \eqref{eq:psi}, we have $g_{A, B}^{C}=g_{B^{t}, A^{t}}^{C^{t}}$. That is,
\begin{align*}
q^{-\mathfrak{a}(C)+\ell(w_\circ^\mu)}\gamma_{A, B}^{C^{t}}+\mathrm{higher~terms}=q^{-\mathfrak{a}(C^{t})+\ell(w_\circ^\mu)}\gamma_{B^{t}, A^{t}}^{C}+\mathrm{higher~terms},
\end{align*}
where $\ro(B)=\mu=\co(A)$. By (1), we must have $\gamma_{A, B}^{C^{t}}=\gamma_{B^{t}, A^{t}}^{C}$.
\end{proof}

\subsection{Properties of cells}
The following theorem is a $q$-Schur algebra analog of properties for cells in Hecke algebras (cf. \cite[Conj.~ 14.2(P1)-(P15)]{Lu03}, which holds for finite/affine Hecke algebras of equal parameters \cite[\S15]{Lu03}). (Note that a counterpart is not formulated here for (P12) of \cite[Conj.~ 14.2]{Lu03} which compares the $\mathbf{a}$-functions on a Coxeter group and its parabolic subgroup.)

\begin{thm}[{\cite[Theorem~5.8]{CLW20}}]
\label{thm:cell structures}
The following properties hold.
\begin{itemize}
\item[(P1)] For any $C \in \Xi$ with $\ro(C)=\co(C)$, we have $\mathfrak{a}(C)\leq\Delta(C)$.
\item[(P2)] If $D\in \mathcal{D}$ and $\gamma_{A, B}^{D}\neq 0$, for $A, B \in \Xi$, then $B=A^{t}$.
\item[(P3)] For each $C\in \Xi$, there is a unique $D\in \mathcal{D}$ such that $\gamma_{C, C^{t}}^{D}\neq 0$.
\item[(P4)] If $C\preceq_{LR}C'$, then $\mathfrak{a}(C)\geq \mathfrak{a}(C')$. Hence, if $C\sim_{LR}C'$, then $\mathfrak{a}(C)=\mathfrak{a}(C')$.
\item[(P5)] If $D\in \mathcal{D}$ and $\gamma_{A^{t}, A}^{D}\neq 0$ for $A\in \Xi$, then $\gamma_{A^{t}, A}^{D}=n_{D}=1$.
\item[(P6)] If $D\in \mathcal{D}$, then $D=D^{t}$.
\item[(P7)] We have $\gamma_{A, B}^{C}=\gamma_{C, A}^{B}=\gamma_{B, C}^{A}$, for any $A, B, C \in \Xi$.
\item[(P8)] Suppose that $\gamma_{A, B}^{C}\neq 0$, for $A, B, C \in \Xi$. Then $A\sim_{L}B^{t}$, $B\sim_{L}C^{t}$ and $C\sim_{L}A^{t}$.
\item[(P9)] If $C\preceq_{L}C'$ and $\mathfrak{a}(C)=\mathfrak{a}(C')$, then $C\sim_{L}C'$.
\item[(P10)] If $C\preceq_{R}C'$ and $\mathfrak{a}(C)=\mathfrak{a}(C')$, then $C\sim_{R}C'$.
\item[(P11)] If $C\preceq_{LR}C'$ and $\mathfrak{a}(C)=\mathfrak{a}(C')$, then $C\sim_{LR}C'$.
%
\item[(P13)] Each left cell $\Gamma$ in $\Sc_q$ contains a unique distinguished element $D$. Moreover, we have $\gamma_{C^{t}, C}^{D}\neq 0$ for all $C\in \Gamma$.
\item[(P14)] We have $C\sim_{LR}C^{t}$, for any $C\in \Xi$.
\item[(P15)] Let $q'$ be a second indeterminate and let $\hat{g}_{A, B}^{C}\in \mathbb{Z}[q', q'^{-1}]$ be obtained from $g_{A, B}^{C}$ by the substitution $q\mapsto q'$. If $C, C', A, B\in \Xi$ satisfy $\mathfrak{a}(A)=\mathfrak{a}(B)$, then
\[
\sum\limits_{A''\in \Xi}\hat{g}_{B, C'}^{A''}g_{C, A''}^{A}=\sum\limits_{A'\in \Xi}g_{C, B}^{A'}\hat{g}_{A', C'}^{A}.
\]
\end{itemize}
\end{thm}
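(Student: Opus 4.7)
The plan is to deduce each of (P1)--(P15) for $\Sc_q$ from the corresponding statement for the Hecke algebra $\HH$, which is known to hold for finite Coxeter groups with equal parameters by Lusztig \cite[\S15]{Lu03}. The bridge is Proposition \ref{prop:d=d2}, and in particular the identification $C \mapsto w_C^+$ from $\Xi$ (modulo the row/column data) to the set of longest double coset representatives in $W$, together with the equality $\mathfrak{a}(C) = \mathbf{a}(w_C^+)$.

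First I would compare the structure constants $g_{A,B}^C$ of $\Sc_q$ with the Kazhdan--Lusztig structure constants $h_{x,y}^z$ of $\HH$. Using \eqref{KLP}, which realizes the canonical basis of $\Sc_q$ via $\mathcal{C}_{w_C^+}$ up to factors involving Poincar\'e polynomials of parabolics, one should obtain a clean formula of the shape $g_{A,B}^C = \delta_{\co(A),\ro(B)} \cdot h_{w_A^+, w_B^+}^{w_C^+}$ (possibly adjusted by factors that cancel in the leading $q$-expansion). Once the leading terms are matched, one concludes $\gamma_{A,B}^C = \gamma_{w_A^+, w_B^+}^{w_C^+}$ when $\co(A)=\ro(B)$, and vanishes otherwise. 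In parallel, one verifies that an element $D \in \Xi$ with $\ro(D)=\co(D)$ is distinguished precisely when $w_D^+$ is a distinguished involution in $W$; this uses the comparison $\Delta(C) = \Delta(w_C^+)$ (by definition of the Kazhdan--Lusztig polynomials and \eqref{KLP}) together with $\mathfrak{a}(C)=\mathbf{a}(w_C^+)$.

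With these two comparisons in hand, each property transfers essentially formally. Properties (P1), (P4) and (P15) follow directly from the $\HH$-analogs and Proposition \ref{prop:d=d2}(4). Properties (P9)--(P11) combine the $\HH$-analogs with parts (1)--(3) of Proposition \ref{prop:d=d2}, since the row/column data $\ro,\co$ are constant on each one-sided cell (for (P11), note that (P4) already forces $\mathfrak{a}$-constancy on two-sided cells, and the passage to $\sim_{LR}$ in $\Xi$ matches the one in $W$ by Proposition \ref{prop:d=d2}(3)). Properties (P2), (P3), (P5)--(P8), (P13) are translations of the corresponding $\HH$-statements via the $\gamma$-comparison, combined with Corollary \ref{lem:d=d3} and (P6) for $\HH$ (so that $D=D^t$ descends to $\Xi$). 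Property (P14) uses $\mathfrak{a}(C)=\mathfrak{a}(C^t)$ from Corollary \ref{lem:d=d3}(1) combined with (P11).

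The main obstacle is the bookkeeping in the structure constant comparison: the canonical basis of $\Sc_q$ is normalized against powers of $q$ involving $\ell(w_\circ^\gamma)$ and against Poincar\'e polynomials $\mathfrak{P}_{W_\gamma}$, so one must carefully track how these factors interact in the product $\{A\}\{B\}$. The key point is that after extracting the leading coefficient defining $\gamma_{A,B}^{C^t}$, the shift by $\ell(w_\circ^{\co(A)})$ in \eqref{def:afunction} is precisely engineered to cancel the parabolic corrections, recovering the Hecke-algebra leading coefficient. Once this cancellation is verified in one case (say via a direct calculation on the standard basis $[C]$ using the realization in $m_\ff \Sc_{q,\K}' m_\ff$ from Section \ref{sec:abiform}), the remaining transfers reduce to citing the corresponding statement in \cite[Ch.~15]{Lu03}.
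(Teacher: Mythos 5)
This theorem is not proved in the paper at all: it is imported verbatim from \cite[Theorem~5.8]{CLW20}, so there is no internal proof to compare against. Your reduction-to-$\HH$ strategy is nevertheless the right one, and it is essentially the strategy of \cite{CLW20}. The crucial structure-constant comparison you posit is correct in the precise form
$\mathcal{C}_{w_A^+}\mathcal{C}_{w_B^+}=q^{\ell(w_\circ^\mu)}\mathfrak{P}_{W_\mu}\sum_{C} g_{A,B}^{C}\,\mathcal{C}_{w_C^+}$ for $\mu=\co(A)=\ro(B)$ (this exact identity is verified later in the paper, in the proof of Lemma~\ref{mapj}(1), via the embedding $\varphi$ of \S\ref{sec:abiform}); since $q^{\ell(w_\circ^\mu)}\mathfrak{P}_{W_\mu}$ has leading term $q^{-\ell(w_\circ^\mu)}$ with coefficient $1$, extracting leading coefficients and using $\mathfrak{a}(C)=\mathbf{a}(w_C^+)$ gives $\gamma_{A,B}^{C}=\gamma_{w_A^+,w_B^+}^{w_C^+}$, exactly as you claim. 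Note also that for (P15) you need more than Proposition~\ref{prop:d=d2}(4): the Hecke-side sums run over all of $W$, so you must check that the products involved are supported on longest double-coset representatives (this is the content of the computation in Lemma~\ref{mapj}(2)); it works, but it is not ``direct.''

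The one genuine gap is your treatment of distinguished elements. You assert $\Delta(C)=\Delta(w_C^+)$ ``by definition of the Kazhdan--Lusztig polynomials and \eqref{KLP},'' but this is false as a definition-chase: $\Delta(C)$ is read off from the leading term of $p_{w_\circ^\gamma,\,w_C^+}$ (the polynomial attached to the \emph{longest element of the parabolic}), whereas Lusztig's $\Delta(w_C^+)$ is read off from $p_{e,\,w_C^+}$. Relating the two requires the standard identity $P_{x,w}=P_{sx,w}$ whenever $sw<w$ and $sx>x$, applied repeatedly (legitimate here because $w_C^+$ is longest in its double coset, so $sw_C^+<w_C^+$ for all $s\in J_\gamma$) to get $P_{e,w_C^+}=P_{w_\circ^\gamma,w_C^+}$, and then a careful bookkeeping of the resulting degree shift by $\ell(w_\circ^\gamma)$ --- which is precisely the shift built into the definitions of $\mathfrak{a}(C)$ and $\Delta(C)$ in this paper (see the Remark following the definition of distinguished elements, which records that both functions differ from those of \cite{CLW20} by $\ell(w_\circ^{\co(C)})$). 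Without this step the equivalence ``$D\in\mathcal{D}$ iff $w_D^+$ is a distinguished involution of $W$'' is unproved, and (P1), (P2), (P3), (P5), (P6) and (P13) do not transfer. For (P13) you additionally need to know that the unique distinguished involution $d$ of the Hecke left cell $\mathfrak{\Gamma}$ attached to $\Gamma$ actually arises as $w_D^+$ for some $D=(\nu,g,\nu)\in\Gamma$, i.e.\ that $d$ is the longest element of $W_\nu dW_\nu$; this follows from $\mathcal{L}(d)=\mathcal{R}(d)\supseteq J_\nu$ but again deserves a sentence. Fill in these points and the transfer argument is complete.
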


\section{Asymptotic Schur algebras}
\label{sec:repremj}

\subsection{Asymptotic Schur algebras}
\label{sec:asymp}
As the same as in \cite[\S6.1]{CLW20}, we can define a $\mathbb{Z}$-algebra $\mathbf{J}$ named the \emph{asymptotic Schur algebra}. This algebra has a $\mathbb{Z}$-basis $\{t_{C}\:|\:C\}$ with the multiplication
\[
t_{A}t_{B}=\sum\limits_{C\in \Xi}\gamma_{A,B}^{C^{t}}t_{C},
\]
and the identity $\sum_{D\in \mathcal{D}}t_{D}$. Moreover, the set $\{t_D ~|~ D \in \mathcal{D}\}$ is a set of pairwise orthogonal idempotents and $t_C t_D=t_C$ if $C \sim_{L} D$ and $0$ otherwise, $t_D t_C=t_C$ if $C \sim_{R} D$ and $0$ otherwise.

For a subset $Y \subset \Xi$, we set $\mathbf{J}_Y=\sum_{C \in Y} \Z t_C$. Let $B_0$ be the set of all two-sided cells in $\Sc_{q}$ and $\mathcal{D}_{\mathbf{c}}=\mathcal{D}\cap\mathbf{c}$. Then $\mathbf{J}_{\mathbf{c}}$ is an ideal of $\mathbf{J}$, moreover, an algebra with the identity $t_{\mathbf{c}}:=\sum_{D\in \mathcal{D}_{\mathbf{c}}}t_{D}$ and
$$\mathbf{J}=\bigoplus_{\mathbf{c}\in B_{0}}\mathbf{J}_{\mathbf{c}}.$$
Let $\tau : \mathbf{J} \rightarrow \Z$ be the $\Z$-map such that $\tau(t_{C})=1$ if $C\in \mathcal{D}$ and $0$ otherwise. Then clearly the bilinear form defined by $(t_A,t_B)=\tau(t_A t_B)=\delta_{A,B^t}$ is non-degenerate symmetric associative.

The following proposition characterizes the left, right and two-sided cells in $\Sc_{q}$.
\begin{prop}[{\cite[Proposition~6.4]{CLW20}}]
\label{characterization-two-cells}
Let $C, C'\in \Xi$.
\begin{itemize}
\item[(1)] $C\sim_{L}C'$ if and only if $t_{C}t_{C'^{t}}\neq 0$; $C\sim_{L}C'$ if and only if $t_{C'}$ appears with nonzero coefficient in $t_{A}t_{C}$ for some $A\in \Xi$.
\item[(2)] $C\sim_{R}C'$ if and only if $t_{C^{t}}t_{C'}\neq 0$; $C\sim_{R}C'$ if and only if $t_{C'}$ appears with nonzero coefficient in $t_{C}t_{A}$ for some $A\in \Xi$.
\item[(3)] $C\sim_{LR}C'$ if and only if $t_{C}t_{A}t_{C'}\neq 0$ for some $A\in \Xi$; $C\sim_{LR}C'$ if and only if $t_{C'}$ appears with nonzero coefficient in $t_{A'}t_{C}t_{A}$ for some $A', A\in \Xi$.
\end{itemize}
\end{prop}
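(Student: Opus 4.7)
The plan is to adapt Lusztig's approach for the asymptotic Hecke algebra (\cite[Ch.~18]{Lu03}) to the asymptotic Schur algebra $\mathbf{J}$, combining Theorem \ref{thm:cell structures} with the positivity $\gamma_{A, B}^{C} \in \N$ (a consequence of $g_{A, B}^{C} \in \N[q, q^{-1}]$ together with the definition of $\gamma$). The crucial consequence of positivity is that no cancellation can occur when a product $t_{A_{1}} t_{A_{2}} \cdots t_{A_{k}}$ is expanded in the basis $\{t_{C}\}$; such a product is nonzero as soon as some intermediate pairing contributes nontrivially.

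For the first claim of (1), the reverse direction is immediate from (P8): a nonzero $\gamma_{C, C'^{t}}^{E^{t}}$ forces $C \sim_{L} (C'^{t})^{t} = C'$. For the forward direction, given $C \sim_{L} C'$, let $D$ denote the unique distinguished element of their common left cell (P13); then $\gamma_{C'^{t}, C'}^{D} \neq 0$, so $t_{C'^{t}} t_{C'}$ contains $t_{D}$ with positive coefficient. Since $C \sim_{L} D$ yields $t_{C} t_{D} = t_{C}$, positivity forces $t_{C}(t_{C'^{t}} t_{C'})$ to contain $t_{C}$ with positive coefficient, whence $t_{C} t_{C'^{t}} \neq 0$. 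The second claim of (1) follows from the first via the cyclic symmetry $\gamma_{A, B}^{C} = \gamma_{C, A}^{B}$ in (P7). Part (2) is obtained from Part (1) by applying the anti-automorphism $\Psi$ from Section 2.4: by Corollary \ref{lem:d=d3}(2), the assignment $t_{C} \mapsto t_{C^{t}}$ extends to an anti-automorphism of $\mathbf{J}$ exchanging $\sim_{L}$ with $\sim_{R}$.

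For (3), the reverse direction again uses positivity: $t_{C} t_{A} t_{C'} \neq 0$ yields some $F$ with $\gamma_{C, A}^{F^{t}} \neq 0$ and $t_{F} t_{C'} \neq 0$; the former gives $F \sim_{R} C$ by (P8), the latter gives $F \sim_{L} C'^{t}$ by Part (1), and (P14) (which yields $C' \sim_{LR} C'^{t}$) produces $C \sim_{LR} C'$. The forward direction reduces, through Part (2)'s second claim and Part (1), to producing an $F \in \mathbf{c}$ lying in both the right cell of $C$ and the left cell of $C'^{t}$; the second assertion of (3) is analogous. The main obstacle is establishing the existence of this $F$ --- equivalently, the assertion that within any two-sided cell $\mathbf{c}$, every right cell meets every left cell. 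I would handle it by induction on the length of a chain $C = E_{0}, E_{1}, \ldots, E_{k} = C'^{t}$ of consecutive $\sim_{L}$- and $\sim_{R}$-equivalences in $\mathbf{c}$: the length-zero case takes $F = C$; the length-one cases $C \sim_{L} C'^{t}$ and $C \sim_{R} C'^{t}$ admit $F = C$ and $F = C'^{t}$ respectively; and for longer chains the no-cancellation principle together with (P7) and (P9)--(P11) propagates the witness across each step, keeping the argument inside $\mathbf{c}$ throughout.
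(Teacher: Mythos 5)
The paper does not actually prove this proposition; it is imported verbatim from \cite[Proposition~6.4]{CLW20}, so there is no internal proof to compare against. Your argument is correct and is the standard one (Lusztig's asymptotic-algebra argument transported to $\mathbf{J}$ via the positivity $\gamma_{A,B}^{C}\in\N$ and (P1)--(P15)): parts (1), (2) and the reverse implications of (3) are complete as written. The only thing you leave as a sketch is the genuinely nontrivial point, namely the forward direction of (3), which amounts to showing that inside a two-sided cell every right cell meets every left cell; let me confirm that your chain induction does close. The existence of a chain of consecutive $\sim_{L}$/$\sim_{R}$ equivalences staying inside $\mathbf{c}$ follows from (P4), (P9), (P10) applied to a $\preceq_{L}$/$\preceq_{R}$ chain realizing $C\preceq_{LR}C'$. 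The exchange step is then: given $F\sim_{L}G$ and $G\sim_{R}H$, part (1) gives $t_{F}t_{G^{t}}\neq 0$; by (P8) every $M$ with $\gamma_{F,G^{t}}^{M^{t}}\neq 0$ satisfies $M\sim_{R}F$ and $M\sim_{L}G^{t}\sim_{L}H^{t}$, hence $t_{M}t_{H}\neq 0$ by part (1) again; positivity rules out cancellation, so $t_{F}t_{G^{t}}t_{H}\neq 0$, and any $N$ occurring in this product satisfies $N\sim_{R}M\sim_{R}F$ and $N\sim_{L}H$ by (P8). This $N$ is the new witness, and the induction proceeds. So the proposal is sound; I would only ask you to write out this exchange computation explicitly, since it is the heart of the statement, and to note that it uses (P8) and positivity rather than (P9)--(P11) as you suggested.
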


\subsection{The homomorphism $\Phi$}
\label{sec:phipro}
In \cite[Theorem 6.5 \& Remark 6.6]{CLW20}, an $\A$-algebra homomorphism $\Phi :\Sc_{q}\rightarrow \mathbf{J}_\A$, which preserves the identity elements, is defined by
\begin{align*}
\Phi (\{C\})=\sum_{\mathbf{c}\in B_{0}, D\in \mathcal{D}_{\mathbf{c}}, A \in \mathbf{c}}g_{C, D}^{A}t_{A}=\sum_{D\in \mathcal{D}, A \in \Xi, \mathfrak{a}(D)=\mathfrak{a}(A)}g_{C, D}^{A}t_{A}.
\end{align*}
It is a $q$-Schur algebra analog of the one for Hecke algebras established in \cite[\S2.4]{Lu87a} (see \cite[Theorem 2.3]{Du95} and \cite[(4.9)]{Mc03} for finite and affine type $A$, respectively).

If we identify the $\A$-modules $\Sc_{q}$ and $\mathbf{J}_\A$ via $\{C\}\leftrightarrow t_{C}$, then the regular $\mathbf{J}_\A$-action on $\mathbf{J}_\A$ implies the left $\mathbf{J}_\A$-module structure on $\Sc_{q}$ given by
\begin{align*}
	t_{A} \ast \{B\}=\sum\limits_{C\in \Xi}\gamma_{A,B}^{C^{t}}\{C\}.
\end{align*}
For any $a\in \mathbb{N}$, we set
\begin{align*}
	\Sc_{q}^{\geq a}:=\sum_{C \in \Xi; \mathfrak{a}(C)\geq a}\A\{C\}.
\end{align*}
It is easy to check that $\Sc_{q}^{\geq a}$ is an ideal of $\Sc_{q}$ and $\mathbf{J}_\A \ast \Sc_{q}^{\geq a} =\Sc_{q}^{\geq a}$ for any $a \in \N$.
\begin{lem}\label{b2}
For any $s, s'\in \Sc_{q}$, $B \in \Xi$, we have
\begin{align*}
	s\cdot \{B\} \equiv \Phi(s) \ast \{B\} \quad \mathrm{mod~} \Sc_{q}^{\geq \mathfrak{a}(B)+1}.
\end{align*}
\end{lem}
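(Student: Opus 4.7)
The plan is, by $\A$-linearity, to reduce to the case $s=\{A\}$ for a canonical basis element and then compare canonical-basis expansions of both sides. Setting $\mathfrak{a}:=\mathfrak{a}(B)$, one writes
\[
\{A\}\cdot\{B\}=\sum_E g_{A,B}^E\{E\},\qquad \Phi(\{A\})\ast\{B\}=\sum_E\Big(\!\sum_{\substack{D\in\mathcal{D},\,X\\\mathfrak{a}(X)=\mathfrak{a}(D)}}g_{A,D}^X\gamma_{X,B}^{E^t}\Big)\{E\}.
\]
Splitting the comparison by the value of $\mathfrak{a}(E)$: for $\mathfrak{a}(E)>\mathfrak{a}$ the LHS term lies in $\Sc_q^{\geq\mathfrak{a}+1}$ automatically while the RHS coefficient vanishes by (P8); for $\mathfrak{a}(E)<\mathfrak{a}$ both coefficients vanish, by (P4) on the LHS and by (P8) on the RHS. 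Hence the lemma reduces to the identity in $\A$
\[
g_{A,B}^E=\sum_{\substack{D\in\mathcal{D},\,X\\\mathfrak{a}(X)=\mathfrak{a}(D)=\mathfrak{a}}}g_{A,D}^X\gamma_{X,B}^{E^t}\qquad\text{whenever }\mathfrak{a}(E)=\mathfrak{a}.
\]

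To prove this identity I would invoke Property (P15), taking its variables $(C,C',A,B)$ to be (in our notation) $(A,B,E,D)$, where $D\in\mathcal{D}$ with $\mathfrak{a}(D)=\mathfrak{a}$; the hypothesis becomes $\mathfrak{a}(E)=\mathfrak{a}(D)$, which holds. Then (P15) reads
\[
\sum_{A''}\hat g_{D,B}^{A''}\,g_{A,A''}^E=\sum_{A'}g_{A,D}^{A'}\,\hat g_{A',B}^E.
\]
By (P4), only indices with $\mathfrak{a}(A'')=\mathfrak{a}(A')=\mathfrak{a}$ can contribute, and on the right $g_{A,D}^{A'}\neq 0$ combined with (P9) and Proposition~\ref{prop:d=d2}(1) forces $A'\sim_L D$, hence $\co(A')=\co(D)$. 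Consequently every surviving $\hat g$-factor has the same leading $q'$-power $(q')^{-\mathfrak{a}+\ell(w_\circ^{\co(D)})}$, with leading coefficient the corresponding $\gamma$; reading off the leading coefficients yields, for each such $D$,
\[
\sum_{A''}\gamma_{D,B}^{A''^t}\,g_{A,A''}^E=\sum_{A'\sim_L D}g_{A,D}^{A'}\,\gamma_{A',B}^{E^t}.
\]
I would then sum over all $D\in\mathcal{D}$ with $\mathfrak{a}(D)=\mathfrak{a}$. The LHS collapses by (P7) (which recasts $\gamma_{D,B}^{A''^t}=\gamma_{A''^t,D}^B$), (P2) (which forces $A''=B$), and (P3)/(P5)/(P6) (which together give $\sum_D\gamma_{D,B}^{B^t}=1$, with only the unique distinguished $D_B$ contributing), producing $g_{A,B}^E$. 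The RHS rearranges using (P13) and (P9) — the $D$ with $g_{A,D}^{A'}\neq 0$ is the unique distinguished element $D_{A'}$ in the left cell of $A'$ — into exactly $\sum_{D,A'}g_{A,D}^{A'}\gamma_{A',B}^{E^t}$, yielding the identity.

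The hard part will be the leading-$q'$-coefficient extraction: one must use (P4) to confine both sums to the single layer $\mathfrak{a}=\mathfrak{a}(B)$, and exploit the constancy of $\co$-values along left cells (Proposition~\ref{prop:d=d2}(1)) to guarantee that all surviving $\hat g$-factors share a common lowest power of $q'$, so that leading coefficients can be matched unambiguously. The remaining cancellations on both sides are a bookkeeping exercise with distinguished elements, directly paralleling Lusztig's argument for Hecke algebras in \cite[\S18]{Lu03}.
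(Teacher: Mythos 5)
Your proof is correct and takes essentially the same route as the paper's: both arguments hinge on extracting leading $q'$-coefficients from (P15) to obtain the mixed identity $\sum_{A'}g_{A,D}^{A'}\gamma_{A',B}^{E^{t}}=\sum_{A''}g_{A,A''}^{E}\gamma_{D,B}^{A''^{t}}$ (the paper's \eqref{b3}, whose derivation it attributes to the argument of \cite[\S 18.8(b)]{Lu03}), and then collapsing the sum over distinguished elements via (P2), (P3), (P5), (P7). The only difference is presentational: you compare coefficients of each fixed $\{E\}$ rather than manipulating $\Phi(\{C\})\ast\{B\}$ as a whole, and you spell out the leading-coefficient extraction (uniformity of $\mathfrak{a}$-values and of $\co$-values along the left cell of $D$) that the paper delegates to the reference.
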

\begin{proof}
It suffices to show
\begin{align*}
	\{C\}\cdot \{B\}\equiv \Phi(\{C\}) \ast \{B\}\quad \mathrm{mod~} \Sc_{q}^{\geq \mathfrak{a}(B)+1}.
\end{align*}
By Theorem \ref{thm:cell structures}(P15), similar to the proof of \cite[\S18.8(b)]{Lu03}, if $\mathfrak{a}(A)=\mathfrak{a}(B)$ we have
\begin{align}\label{b3}
	\sum\limits_{A'\in \Xi}g_{C, B}^{A'}\gamma_{A', C'}^{A^{t}}=\sum\limits_{A''\in \Xi}g_{C, A''}^{A}\gamma_{B, C'}^{A''^{t}}.
\end{align}
Thus we compute directly that
\begin{align*}
	\Phi(\{C\})\ast \{B\}=&\sum_{D\in \mathcal{D}, A \in \Xi, \mathfrak{a}(D)=\mathfrak{a}(A)}g_{C, D}^{A}t_{A}\ast \{B\}
	=\sum_{\substack{D\in \mathcal{D}, A,E \in \Xi \\ \mathfrak{a}(D)=\mathfrak{a}(A)}} g_{C, D}^{A}\gamma_{A,B}^{E^{t}}\{E\}\\
	=&\sum_{\substack{D \in \mathcal{D}, A,E \in \Xi \\ \mathfrak{a}(D)=\mathfrak{a}(E)}} g_{C, D}^{A}\gamma_{A,B}^{E^{t}}\{E\} \qquad \text{ by Theorem~\ref{thm:cell structures} (P4, P8)}\\
	=&\sum_{\substack{D\in \mathcal{D}, A, E \in \Xi \\ \mathfrak{a}(D)=\mathfrak{a}(E)}} g_{C, A}^{E}\gamma_{D,B}^{A^{t}}\{E\}   \qquad \text{ by \eqref{b3}}\\
	=&\sum_{E \in \Xi, \mathfrak{a}(E)=\mathfrak{a}(B)} g_{C, B}^{E}\{E\}  \qquad \text{ by Theorem~\ref{thm:cell structures} (P2, P3, P5, P7)}\\
	\equiv& \{C\}\cdot \{B\} \quad \mathrm{mod~} \Sc_{q}^{\geq \mathfrak{a}(B)+1}.
\end{align*}
\end{proof}

Let $R$ be an $\A$-algebra. Recall the convention in \S\ref{sec:sch} for the base change $R\otimes_\A-$.
\begin{prop}\label{b4}
	\begin{itemize}
		\item[(1)] Suppose $N$ is the length of the longest element in $W$. Then $(\mathrm{Ker} \Phi_{R})^{N+1}=0$.
		\item[(2)]  If $R=R_{0}[q, q^{-1}]$, where $R_{0}$ is an integral domain and $\A \rightarrow R$ is the obvious ring homomorphism, then $\mathrm{Ker}\Phi_{R}=0$. Particularly, the homomorphism $\Phi$ is injective.
	\end{itemize}
\end{prop}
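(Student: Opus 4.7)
For~(1), the plan is to iterate Lemma~\ref{b2} along the $\mathfrak{a}$-filtration. If $s\in\ker\Phi_R$ then $\Phi(s)\ast\{B\}=0$, so Lemma~\ref{b2} gives $s\cdot\{B\}\in\Sc_{q,R}^{\geq\mathfrak{a}(B)+1}$ for every $B\in\Xi$; expanding an arbitrary element of $\Sc_{q,R}^{\geq a}$ in the canonical basis and applying this termwise yields the shift $s\cdot\Sc_{q,R}^{\geq a}\subseteq\Sc_{q,R}^{\geq a+1}$, which iterates to
\[
s_1 s_2\cdots s_k\cdot\{B\}\in\Sc_{q,R}^{\geq\mathfrak{a}(B)+k}\quad\text{for any }s_1,\ldots,s_k\in\ker\Phi_R.
\]
By Proposition~\ref{prop:d=d2}(4) together with $\mathbf{a}(w_0)=\ell(w_0)=N$, one has $\mathfrak{a}(C)\leq N$ for all $C\in\Xi$, hence $\Sc_{q,R}^{\geq N+1}=0$. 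I would then complete (1) by decomposing $1\in\Sc_{q,R}$ as $\sum_{\gamma\in\Lambda_\ff}\{D_\gamma\}$ with $D_\gamma:=(\gamma,e,\gamma)$: by \eqref{eq:phig} and \eqref{stbasis} we have $[D_\gamma]=\phi_{\gamma\gamma}^e$, the identity of $\End_\HH(x_\gamma\HH)$, and $\{D_\gamma\}=[D_\gamma]$ since no $C<D_\gamma$ exists (as $e$ is the Bruhat-minimum). Therefore
\[
s_1\cdots s_{N+1}=\sum_{\gamma}s_1\cdots s_{N+1}\cdot\{D_\gamma\}\in\Sc_{q,R}^{\geq N+1}=0.
\]

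For~(2), I would reduce to an injectivity statement over a fraction field and invoke semisimplicity. Since $R=R_0[q,q^{-1}]$ is a domain, set $F=\mathrm{Frac}(R)=\mathrm{Frac}(R_0)(q)$; because $\Sc_q$ and $\mathbf{J}_\A$ are free $\A$-modules with distinguished bases, the base-change maps $\Sc_{q,R}\hookrightarrow\Sc_{q,F}$ and $\mathbf{J}_R\hookrightarrow\mathbf{J}_F$ are injective, so it suffices to show $\ker\Phi_F=0$. In $F$ the element $q$ remains transcendental, so the Poincar\'e polynomial $\mathfrak{P}_W$ is a nonzero element of $F$; by the standard criterion (cf.~\cite[Theorem~9.3.5]{GP00}, already used in~\S\ref{sec:sch} for the $\K$-case) $\HH_F$ is split semisimple, and consequently the endomorphism algebra $\Sc_{q,F}\cong\End_{\HH_F}(\bigoplus_\gamma x_\gamma\HH_F)$ is semisimple. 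Applying~(1) with $R$ replaced by $F$, $\ker\Phi_F$ becomes a nilpotent two-sided ideal in a semisimple algebra, hence vanishes.

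The one point that I expect to require care is the semisimplicity of $\HH_F$ when $R_0$ has positive characteristic, since the last step of~(2) crucially needs $\Sc_{q,F}$ to admit no nonzero nilpotent ideal. This is settled by noting that $\mathfrak{P}_W$ is a Laurent polynomial in $q$ with integer coefficients and constant term~$1$, so it survives any ring homomorphism $\A\to F$ sending $q$ to a transcendental element, independently of the characteristic of~$F$. Everything else is a mechanical unraveling of the $\mathfrak{a}$-filtration once Lemma~\ref{b2} and Proposition~\ref{prop:d=d2}(4) are in hand.
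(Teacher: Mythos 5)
Your argument for part~(1) is essentially the paper's: iterate Lemma~\ref{b2} to get $s\cdot\Sc_{q,R}^{\geq a}\subseteq\Sc_{q,R}^{\geq a+1}$ and use $\mathfrak{a}(C)=\mathbf{a}(w_C^+)\leq N$ (the paper cites \cite[Prop.\ 13.8]{Lu03}); your explicit decomposition $1=\sum_\gamma\{D_\gamma\}$ is a harmless elaboration. The problem is part~(2).

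There is a genuine gap in your proof of~(2), and it sits exactly at the point you flagged. Your reduction to $F=\mathrm{Frac}(R_0)(q)$ is fine, but the claim that $\HH_F$ (hence $\Sc_{q,F}$) is semisimple is false when $\mathrm{char}(R_0)$ is a bad prime for $W$. Non-vanishing of $\mathfrak{P}_W$ in $F$ is not the relevant criterion: by the semisimplicity criterion for symmetric algebras (\cite[Thm.\ 7.4.7]{GP00}), one needs \emph{every} Schur element to remain nonzero in $F$, and by Corollary~\ref{c6} the Schur elements have leading coefficient $f_\lambda$, which can be divisible by $p$ together with all other coefficients. Concretely, for $W$ of type $B_2$ the Schur element of the two-dimensional representation is $2q^{-1}(q^2+1)$, which vanishes identically over $\F_2(q)$; the corresponding central element $z_\lambda=\sum_w\chi_\lambda(H_w)H_{w^{-1}}$ is then a nonzero central element with $z_\lambda^2=c_\lambda z_\lambda=0$, so $\HH_{\F_2(q)}$ is not semisimple and your final step (``nilpotent ideal in a semisimple algebra'') collapses. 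Since the proposition asserts $\mathrm{Ker}\,\Phi_R=0$ for an arbitrary integral domain $R_0$, this case cannot be excluded.

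The paper avoids semisimplicity altogether: it takes $0\neq s=\sum_A p_A\{A\}\in\mathrm{Ker}\,\Phi_R$, isolates the terms of minimal $\mathfrak{a}$-value and, within those, a suitable extremal $q$-power $b$ of the coefficients, multiplies by $\{D\}$ for the distinguished element $D$ in the left cell of a chosen $C$, and extracts the coefficient of $q^{a+b-\ell(w_\circ^\gamma)}$ in $\sum_A p_A g_{A,D}^C$. Properties (P2), (P3), (P5), (P7), (P13) force this coefficient to equal $\pi_b(p_C)\gamma_{C,D}^{C^t}$ with $\gamma_{C,D}^{C^t}=\gamma_{C^t,C}^{D}=1$, and $\pi_b(p_C)\neq0$ in the integral domain $R_0$ gives the contradiction. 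If you want to keep a ``global'' argument instead of this leading-coefficient one, the viable route is not semisimplicity but the determinant: as the paper observes in \S\ref{sec:blocks}, $q^{\mathfrak{a}(C)-\ell(w_\circ^{\co(C)})}\Phi(\{C\})\in t_C+\sum q\Z[q]\,t_A+\cdots$, so $\det\Phi\in\pm q^{-m}(1+q\Z[q])$, which is nonzero in any $F=\mathrm{Frac}(R_0)(q)$ and yields injectivity of $\Phi_F$ directly.
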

\begin{proof}
$\mathrm{(1)}$ Note that for any $C\in \Xi$, we have $\mathfrak{a}(C)=\mathbf{a}(w_{C}^{+})\leq N$ by Proposition \ref{prop:d=d2}(4) and \cite[Proposition 13.8]{Lu03}. If $s\in \mathrm{Ker}\Phi_{R}$, then by Lemma \ref{b2}, we have $s\cdot \Sc_{q,R}^{\geq a}\subseteq \Sc_{q,R}^{\geq a+1}$ for any $a\in \mathbb{N}$. Applying this repeatedly, we see that $s_1s_2 \cdots s_{N+1}\in \Sc_{q,R}^{\geq N+1}=\{0\}$ for any $s_1,\ldots, s_{N+1}\in \mathrm{Ker}\Phi_{R}$.

$\mathrm{(2)}$ For any $b\in \Z$ and $p=\sum_{k\in \Z}a_{k}q^{k}\in R$, we define $\pi_{b}(p)=a_{b}\in R_{0}$. Take any $0\neq s=\sum_{A\in\Xi}p_{A}\{A\}\in \mathrm{Ker}\Phi_{R}$ ($p_{A}\in R$). Let $a=\min\{\mathfrak{a}(A)~|~A\in \Xi, p_{A}\neq 0\}\in \mathbb{N}$ and hence $\mathfrak{X} :=\{A\in \Xi\:|\:p_{A}\neq 0, \mathfrak{a}(A)=a\}$ is non-empty. Moreover, we can find $b\in \Z$ such that $p_{A}\in q^{b}R_{0}[q^{-1}]$ for all $A\in \mathfrak{X}$ and such that $\mathfrak{X}'=\{A\in \mathfrak{X}\:|\:\pi_{b}(p_{A})\neq 0\}$ is non-empty.

Let $C\in \mathfrak{X}'$. By Theorem \ref{thm:cell structures}, we can find $D=(\gamma, g, \gamma)\in \mathcal{D}$ such that $\gamma_{C, D}^{C^{t}}=\gamma_{C^{t}, C}^{D}\neq 0$ and $\mathfrak{a}(D)=\mathfrak{a}(C)=a$. We have $s\{D\}=\sum_{A\in\Xi}p_{A}\{A\}\{D\}$. Since $\{A\}\{D\}\in \Sc_{q,R}^{\geq \mathfrak{a}(A)}$, we have
\begin{align*}
s\{D\}=\sum_{A\in \mathfrak{X}}p_{A}\{A\}\{D\}\quad \mathrm{mod~} \Sc_{q,R}^{\geq a+1},
\end{align*} which implies $\sum_{A\in \mathfrak{X}}p_{A}\{A\}\{D\}\in \Sc_{q,R}^{\geq a+1}$ because of $s\{D\}\in \Sc_{q,R}^{\geq a+1}$ by Lemma \ref{b2}. In particular, the coefficient of $\{C\}$ is zero when expanding $\sum_{A\in \mathfrak{X}}p_{A}\{A\}\{D\}$. That is $\sum_{A\in \mathfrak{X}}p_{A}g_{A, D}^{C}=0$. Note that for $A\in \mathfrak{X}$, we have $p_{A}=\pi_{b}(p_{A})q^{b}+\mathrm{lower~terms}$, and $g_{A, D}^{C}=\gamma_{A, D}^{C^{t}}q^{a-\ell(w_\circ^\gamma)}+\mathrm{lower~terms}$. So the coefficient of $q^{a+b-\ell(w_\circ^\gamma)}$ in $\sum_{A\in \mathfrak{X}}p_{A}g_{A, D}^{C}$ is
\begin{align*}
\sum_{A\in \mathfrak{X}}\pi_{b}(p_{A})\gamma_{A, D}^{C^{t}}=\pi_{b}(p_{C})\gamma_{C, D}^{C^{t}}\neq 0,
\end{align*}
which is a contradiction. Thus $\mathrm{Ker}\Phi_R=0$.
Particularly, we have $\mathrm{Ker}\Phi=0$ by taking $R=\A$, and hence $\Phi$ is injective.
\end{proof}

Note that we regard $\Q$ as an $\A$-algebra via $q\mapsto1$.
\begin{cor}\label{b1}
Both $\Phi_\Q$ and $\Phi_\K$ are isomorphisms. In particular, $\mathbf{J}_\Q$ and $\mathbf{J}_\K$ are both split semisimple.
\end{cor}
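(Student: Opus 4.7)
The plan is to show that each of $\Phi_\Q$ and $\Phi_\K$ is injective, and then upgrade injectivity to bijectivity via a dimension comparison. Both $\Sc_q$ and $\mathbf{J}_\A$ are free $\A$-modules of rank $|\Xi|$: the former has canonical basis $\{\{C\}\}_{C\in\Xi}$, and the latter has basis $\{t_C\}_{C\in\Xi}$ by construction in \S\ref{sec:asymp}. Hence after base change to any $\A$-algebra $R$, the source and target of $\Phi_R$ are free $R$-modules of equal rank $|\Xi|$. For $R=\Q$ or $R=\K$ these are vector spaces of equal finite dimension, so injectivity will force bijectivity. The split semisimplicity of $\mathbf{J}_\Q$ and $\mathbf{J}_\K$ will then follow by transport of structure from $\Sc_\Q$ and $\Sc_{q,\K}$, which are already known to be split semisimple (see \S\ref{sec:sch}).

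For $\Phi_\K$, Proposition~\ref{b4}(2) applied to $R=\A=\Z[q,q^{-1}]$ gives that $\Phi=\Phi_\A$ is injective. Since $\K=\mathrm{Frac}(\A)$ is obtained from the integral domain $\A$ by localization, the functor $-\otimes_\A\K$ is exact, so it preserves this injectivity; thus $\Phi_\K$ is injective, and by the dimension count above it is an isomorphism.

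For $\Phi_\Q$, Proposition~\ref{b4}(2) does not apply directly since $\Q$ is not of the form $R_0[q,q^{-1}]$. Instead we invoke Proposition~\ref{b4}(1), which asserts that $(\mathrm{Ker}\,\Phi_\Q)^{N+1}=0$. Thus $\mathrm{Ker}\,\Phi_\Q$ is a nilpotent two-sided ideal of $\Sc_\Q$. Since $\Sc_\Q$ is split semisimple, its Jacobson radical is zero and it contains no nonzero nilpotent ideals; hence $\mathrm{Ker}\,\Phi_\Q=0$. Combined with the dimension argument, $\Phi_\Q$ is an isomorphism.

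No serious obstacle remains: the technical core is already encapsulated in Proposition~\ref{b4}, and the remaining ingredients — equality of $\A$-ranks, flatness of localization, and semisimplicity of $\Sc_\Q$ and $\Sc_{q,\K}$ — are all available from the preceding material. The one point requiring a little care is that the two specializations ($q\mapsto 1$ versus passage to $\K$) must be handled by different parts of Proposition~\ref{b4}; semisimplicity of $\Sc_\Q$ is what is needed to eliminate the a priori possibly nonzero (but nilpotent) kernel at $q=1$.
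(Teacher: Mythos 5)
Your proposal is correct and follows essentially the same route as the paper: for $\Phi_\Q$ the paper likewise combines the nilpotency of the kernel from Proposition~\ref{b4}(1) with the semisimplicity of $\Sc_\Q$ to kill the kernel, and for $\Phi_\K$ it likewise passes from the injectivity of $\Phi$ over $\A$ (Proposition~\ref{b4}(2)) to $\K$ via the fraction field, finishing both cases by the equality of dimensions $\sharp\Xi$. Your write-up is if anything slightly more explicit about the flatness of localization and the transport of split semisimplicity, but the argument is the same.
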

\begin{proof}
	Take $R=\Q$. In this case, we obtain an algebra homomorphism $\Phi_{\Q} : \mathcal{S}_{\Q}\rightarrow \mathbf{J}_{\Q}$. Since $\mathcal{S}_{\Q}$ is split semisimple, by Proposition \ref{b4} we obtain $\mathrm{Ker}\Phi_{\Q}=0$, which implies the injectivity of $\Phi_{\Q}$. Note that $\mathrm{dim} \mathcal{S}_{\Q}=\mathrm{dim} \mathbf{J}_{\Q}=\sharp \Xi$. So $\Phi_{\Q}$ must be an isomorphism.
	
	Take $R=\K$. By Corollary \ref{b1} and the fact that injectivity is preserved by tensoring with a field of fractions, we know that $\Phi_{\K} : \Sc_{q,\K}\rightarrow \mathbf{J}_\K$ is injective. Since $\Sc_{q,\K}$ and $\mathbf{J}_\K$ have the same dimension, it follows that $\Phi_{\K}$ is an isomorphism.
\end{proof}

Denote by $\Phi_R^*$ the functor from $\mathrm{Mod}(\mathbf{J}_R)$ to $\mathrm{Mod}(\Sc_{q,R})$ induced by $\Phi_R$. Since $\Phi_\Q$ and $\Phi_\K$ are isomorphisms, $\Phi_\K^*$ and $\Phi_\Q^*$ are equivalent. As mentioned above, $\mathbf{J}_\Q$ is split semisimple, hence the functor $\K \otimes_\Q-$ gives an equivalence between $\mathrm{Mod}(\mathbf{J}_\Q)$ and $\mathrm{Mod}(\mathbf{J}_\K)$.


Let
$\{E^{\lambda}\:|\:\lambda\in \mathcal{E}\}$
be the complete set of pairwise non-isomorphic irreducible representations of $\mathcal{S}_{\Q}$, where $\mathcal{E}$ is a certain finite index set. For each $\lambda\in \mathcal{E}$, denote $\Phi_\Q^{*-1}(E^{\lambda})$ by $E_{\spadesuit}^{\lambda}$ and $\Phi_\K^*(E_{\spadesuit, \K}^{\lambda})$ by $E_{q}^{\lambda}$. Then $\{E_{\spadesuit}^{\lambda}\:|\:\lambda\in \mathcal{E}\}$ and $\{E_{q}^{\lambda}\:|\:\lambda\in \mathcal{E}\}$ forms the complete sets of pairwise non-isomorphic irreducible representations of $\mathbf{J}_{\Q}$ and $\Sc_{q,\K}$ respectively.
For $\mathbf{c}\in B_0$, let $\mathcal{E}_{\mathbf{c}}\subset\mathcal{E}$ be the set consisting of those $\lambda$ such that $t_{C}E_{\spadesuit}^\lambda \neq 0$ for some $C\in \mathbf{c}$.
\begin{lem}\label{lem:part}
 The set $\mathcal{E}$ can be partitioned as
$\mathcal{E}=\coprod_{\mathbf{c}\in B_0}\mathcal{E}_{\mathbf{c}}$.
\end{lem}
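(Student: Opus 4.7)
The plan is to exploit the block decomposition $\mathbf{J}=\bigoplus_{\mathbf{c}\in B_{0}}\mathbf{J}_{\mathbf{c}}$ already recorded in \S\ref{sec:asymp}. First I would verify that each $t_{\mathbf{c}}=\sum_{D\in\mathcal{D}_{\mathbf{c}}}t_{D}$ is a central idempotent of $\mathbf{J}$ and that the family $\{t_{\mathbf{c}}\mid \mathbf{c}\in B_{0}\}$ consists of pairwise orthogonal idempotents summing to the identity $\sum_{D\in\mathcal{D}}t_{D}$ of $\mathbf{J}$. Orthogonality and idempotency follow from the multiplication rule $t_{C}t_{D}=t_{C}$ or $0$ (depending on whether $C\sim_{L}D$ or not) and its right analogue; centrality follows from the fact that $\mathbf{J}_{\mathbf{c}}$ is a two-sided ideal (so $t_{\mathbf{c}}x\in\mathbf{J}_{\mathbf{c}}$ for all $x\in\mathbf{J}$, while $t_{\mathbf{c}}$ is the identity of $\mathbf{J}_{\mathbf{c}}$). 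Tensoring with $\Q$ preserves all these properties in $\mathbf{J}_{\Q}$.

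Next, for any $\lambda\in\mathcal{E}$, Schur's lemma applied to the simple $\mathbf{J}_{\Q}$-module $E^{\lambda}_{\spadesuit}$ forces each central idempotent $t_{\mathbf{c}}$ to act on $E^{\lambda}_{\spadesuit}$ as $0$ or as the identity. Since $\sum_{\mathbf{c}\in B_{0}}t_{\mathbf{c}}$ acts as the identity and the $t_{\mathbf{c}}$ are mutually orthogonal, exactly one $\mathbf{c}=\mathbf{c}(\lambda)$ has $t_{\mathbf{c}}$ acting as the identity on $E^{\lambda}_{\spadesuit}$, while all other $t_{\mathbf{c}'}$ act as $0$.

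Finally, I would translate this into the claim about $\mathcal{E}_{\mathbf{c}}$. If $\lambda\in\mathcal{E}_{\mathbf{c}}$, then $t_{C}E^{\lambda}_{\spadesuit}\neq 0$ for some $C\in\mathbf{c}$; writing $t_{C}=t_{C}t_{\mathbf{c}}$ (which holds because $t_{C}\in\mathbf{J}_{\mathbf{c}}$ and $t_{\mathbf{c}}$ is the identity of $\mathbf{J}_{\mathbf{c}}$), we deduce $t_{\mathbf{c}}E^{\lambda}_{\spadesuit}\neq 0$, so $\mathbf{c}=\mathbf{c}(\lambda)$; this shows the sets $\mathcal{E}_{\mathbf{c}}$ are pairwise disjoint. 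Conversely, letting $\mathbf{c}=\mathbf{c}(\lambda)$, since $t_{\mathbf{c}}=\sum_{D\in\mathcal{D}_{\mathbf{c}}}t_{D}$ acts as the nonzero identity, at least one $t_{D}$ with $D\in\mathcal{D}_{\mathbf{c}}\subset\mathbf{c}$ acts nontrivially on $E^{\lambda}_{\spadesuit}$, giving $\lambda\in\mathcal{E}_{\mathbf{c}}$; this shows the union covers $\mathcal{E}$.

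There is no real obstacle here: the entire argument is a formal consequence of the block decomposition of $\mathbf{J}$ and Schur's lemma, with the only mild subtlety being to check centrality of $t_{\mathbf{c}}$ explicitly from the ideal structure rather than relying on its role as a unit inside $\mathbf{J}_{\mathbf{c}}$ alone.
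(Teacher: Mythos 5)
Your argument is correct and is exactly the standard unpacking of the paper's one-line proof, which simply cites the block decomposition $\mathbf{J}=\bigoplus_{\mathbf{c}\in B_0}\mathbf{J}_{\mathbf{c}}$ and calls the conclusion clear. The details you supply (the $t_{\mathbf{c}}$ are pairwise orthogonal central idempotents summing to the identity, Schur's lemma forces each to act as $0$ or $\mathrm{id}$ on a simple module, and $t_C=t_Ct_{\mathbf{c}}$ for $C\in\mathbf{c}$ gives the translation to $\mathcal{E}_{\mathbf{c}}$) are precisely what the paper leaves implicit, so there is nothing to add.
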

\begin{proof}
Since $\mathbf{J}=\bigoplus_{\mathbf{c}\in B_{0}}\mathbf{J}_{\mathbf{c}}$, the conclusion is clear.
\end{proof}

Since $\Sc_{q,\K}$ is split semisimple, we have $$\Sc_{q,\K}=\bigoplus_{\lambda\in\mathcal{E}}\Sc_{q,\K}^\lambda,\quad\mbox{where~}\Sc_{q,\K}^\lambda\cong\mathrm{End}_\K(E_q^\lambda).$$

\subsection{Schur elements}
Recall that in \S\ref{sec:abiform} we constructed a non-degenerate symmetric bilinear form $(\cdot, \cdot)$ on $\Sc_{q,\K}$. Because of \cite[Proposition~19.2]{Lu03}, we have the following orthogonality relation:
\begin{align}\label{d2}
	\sum\limits_{A=(\gamma,g,\nu) \in \Xi} q^{\ell(w_\circ^\gamma)+\ell(w_\circ^\nu)-2\ell(w_A^+)}\mathfrak{P}_{W_\gamma \cap g W_\nu g^{-1}}\Tr([A], E_q^{\lambda}) \Tr([A^{t}], E_q^{\lambda'})=\delta_{\lambda\lambda'} d_{\lambda}p_{\lambda},
\end{align}
where $d_{\lambda}=\mathrm{dim} E^{\lambda}$ and $p_\lambda \in \A$. By the property of associative symmetric bilinear form over split semisimple algebras, we have $p_\lambda=(e,e)^{-1}$ for an arbitrary primitive idempotent $e\in\Sc_{q,\K}^\lambda$. These elements $p_\lambda\ (\lambda \in \mathcal{E})$ are called \emph{Schur elements} of $\Sc_{q,\K}$ with respect to $(\cdot, \cdot)$.

Similarly, we can define Schur elements of $\HH_\K$ (resp. $\Sc'_{q,\K}$) with respect to $(\cdot,\cdot)_{1}$ (resp. $(\cdot,\cdot)_3$) introduced in \S\ref{sec:abiform}.

\begin{prop}\label{scel}
Each Schur element of $\Sc_{q,\K}$ must be a Schur element of $\HH_\K$.
Furthermore, the set of Schur elements of $\Sc_{q,\K}$ coincides with that of $\HH_\K$ if $\Lambda_\mathbf{f}$ contains at least one regular $W$-orbit.
\end{prop}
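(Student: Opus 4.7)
The plan is to work through the $\K$-algebra isomorphism $\varphi:\Sc_{q,\K}\xrightarrow{\sim}m_\ff\Sc_{q,\K}'m_\ff$ constructed in \S\ref{sec:abiform}. By definition of $(\cdot,\cdot)$, this $\varphi$ is an isometry from $(\Sc_{q,\K},(\cdot,\cdot))$ onto $(m_\ff\Sc_{q,\K}'m_\ff,(\cdot,\cdot)_{3}\vert_{m_\ff\Sc_{q,\K}'m_\ff})$, so the set of Schur elements of $\Sc_{q,\K}$ coincides with that of the idempotent-truncated algebra $m_\ff\Sc_{q,\K}'m_\ff$ equipped with the restricted form.

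As a first step, I would compute the Schur elements of $\Sc_{q,\K}'=\HH_\K\otimes_\K\mathcal{M}_{\ff,\K}$. The algebra $\mathcal{M}_{\ff,\K}$, with basis $\{e_{\gamma\nu}\}$ and product $e_{\gamma\nu}e_{\gamma'\nu'}=\delta_{\nu\gamma'}e_{\gamma\nu'}$, is the matrix algebra of size $|\Lambda_\ff|$; its diagonal matrix units $e_{\gamma\gamma}$ are primitive idempotents and satisfy $(e_{\gamma\gamma},e_{\gamma\gamma})_{2}=1$, so $\mathcal{M}_{\ff,\K}$ has a single Schur element equal to $1$ with unique irreducible module $U=\K\Lambda_\ff$. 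Since both tensor factors are split semisimple, the irreducible $\Sc_{q,\K}'$-modules are $V^\lambda\otimes U$ where $V^\lambda$ runs over the irreducible $\HH_\K$-modules, and primitive idempotents can be chosen as $e_\lambda\otimes e_{\gamma\gamma}$ with $e_\lambda$ primitive in $\HH_\K$. Multiplicativity of $(\cdot,\cdot)_{3}=(\cdot,\cdot)_{1}\otimes(\cdot,\cdot)_{2}$ then yields that the Schur element of $\Sc_{q,\K}'$ attached to $V^\lambda\otimes U$ equals $(e_\lambda,e_\lambda)_{1}^{-1}=s_\lambda$, the Schur element of $V^\lambda$ in $\HH_\K$.

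It remains to identify which of these survive under the truncation by $m_\ff$. Here I will invoke the standard fact that for a split semisimple symmetric $\K$-algebra $A$ with idempotent $e\in A$, any primitive idempotent $f\in eAe$ remains primitive in $A$: decomposing $A$ blockwise as $\prod_\lambda M_{d_\lambda}(\K)$, $f$ becomes a rank-one projection in some $e_\lambda M_{d_\lambda}(\K)e_\lambda\cong M_{r_\lambda}(\K)$, hence a rank-one projection in $M_{d_\lambda}(\K)$. Consequently the Schur element of $f$ computed in $eAe$ with the restricted form coincides with its Schur element in $A$, and the Schur elements of $eAe$ are precisely $\{s_\lambda\mid eL^\lambda\neq 0\}$, where $L^\lambda$ ranges over the irreducibles of $A$. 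Applying this with $A=\Sc_{q,\K}'$ and $e=m_\ff=\sum_\gamma\frac{q^{-\ell(w_\circ^\gamma)}}{\mathfrak{P}_{W_\gamma}}x_\gamma\otimes e_{\gamma\gamma}$, a direct computation shows $m_\ff(V^\lambda\otimes U)\neq 0$ if and only if $x_\gamma V^\lambda\neq 0$ for some $\gamma\in\Lambda_\ff$, which proves the first claim. For the second claim, if $\Lambda_\ff$ contains a regular $W$-orbit $\gamma_0$, then $W_{\gamma_0}=\{1\}$ and hence $x_{\gamma_0}=1$, so $x_{\gamma_0}V^\lambda=V^\lambda\neq 0$ for every $\lambda$, forcing equality of the two sets. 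The main point requiring care is the preservation of primitivity under the idempotent truncation, but it reduces to the elementary block-by-block matrix observation above.
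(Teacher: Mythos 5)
Your proposal is correct and follows essentially the same route as the paper: identify $\Sc_{q,\K}$ with the corner algebra $m_\ff\Sc_{q,\K}'m_\ff$ via the isometry $\varphi$, compute the Schur elements of $\Sc_{q,\K}'=\HH_\K\otimes_\K\mathcal{M}_{\ff,\K}$ through the tensor decomposition (getting exactly the Schur elements of $\HH_\K$), and use the fact that primitive idempotents of an idempotent-truncated split semisimple algebra remain primitive in the ambient algebra, with the regular orbit $\gamma_0$ (for which $m_{\gamma_0}=\id\otimes e_{\gamma_0\gamma_0}$) guaranteeing that every irreducible survives the truncation. The only difference is that you spell out the block-by-block primitivity argument and the survival criterion $m_\ff(V^\lambda\otimes U)\neq 0$ more explicitly than the paper does.
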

\begin{proof}
We have $\HH_\K = \bigoplus_\lambda \HH_\K^\lambda \cong \bigoplus_\lambda \End_\K (V^\lambda)$, where $\{V^\lambda\}_\lambda$ is a complete set of simple modules of $\HH_\K$. Then $$\Sc_{q,\K}' \cong (\bigoplus_\lambda \HH_\K^\lambda) \otimes_\K \mathcal{M}_{\ff,\K} \cong \bigoplus_\lambda \big(\HH_\K^\lambda \otimes_\K \mathcal{M}_{\ff,\K}\big).$$

For any primitive idempotent element $e\in \HH_\K^\lambda$, we can obtain a primitive idempotent element $e \otimes e_{\gamma\gamma}\in \HH_\K^\lambda \otimes_\K \mathcal{M}_{\ff,\K}$ for an arbitrary $\gamma \in \Lambda_{\ff}$. Moreover, $(e \otimes e_{\gamma\gamma},e \otimes e_{\gamma\gamma})_3=(e,e)_1$. Therefore, each Schur element of $\Sc'_{q,\K}$ must be one of $\HH_\K$.

Recall from \S\ref{sec:abiform} that $\Sc_{q,\K} \cong m_\ff \Sc_{q,\K}' m_\ff$, which is employed to induce the bilinear form $(\cdot,\cdot)$ from $(\cdot,\cdot)_3$. Note that a primitive idempotent element of $m_\ff \Sc_{q,\K}' m_\ff$ is also one of $\Sc_{q,\K}'$. Thus a Schur element of $\Sc_{q,\K}$ is also one of $\Sc'_{q,\K}$, and hence one of $\HH_\K$.

Assume $\Lambda_{\ff}$ contains a regular orbit $\gamma$. Then $\HH_\K \cong m_\gamma \Sc_{q,\K}' m_\gamma \subset m_\ff \Sc_{q,\K}' m_\ff$. Thus, by the similar argument every Schur element of $\HH_\K$ is one of $m_\gamma \Sc_{q,\K}' m_\gamma= \Sc_{q,\K}$.
\end{proof}

\subsection{The integer $f_\lambda$ and good primes}
Denote by $f_\lambda$ the coefficient of the lowest $q$-power term of $p_\lambda$.
We say that a prime number $p$ is \emph{bad} if $p$ divides $f_{\lambda}$ for some $\lambda \in \mathcal{E}$; otherwise, $p$ is \emph{good}.

Thanks to Proposition \ref{scel}, if $\Lambda_\mathbf{f}$ contains at least one regular $W$-orbit, the sufficient and necessary condition for $p$
being good is as the same as that for Hecke algebras listed below (cf. \cite[\S2.2]{Ge07}):
$$
A_n: \mbox{no condition}; \quad
B_n,C_n,D_n:  p\neq2;\quad
G_2,F_4,E_6,E_7:   p\neq2,3; \quad
E_8: p\neq2,3,5;
$$
if $\Lambda_\mathbf{f}$ contains no regular $W$-orbit, the above is still a sufficient condition for $p$ being good.

\subsection{The integer $\mathbf{a}_\lambda$}
\label{sec:ainvaritna}
\begin{lem}\label{integ}
	\begin{itemize}
		\item[(1)] Let $M$ be a $\mathbf{J}_\Q$-module of dimension $m$. Then there exists a basis of $M$ such that $\mathbf{J}$ maps into $\Mat_m(\Z)$ under this basis.
		\item[(2)] Let $M$ be a $\Sc_{q,\K}$-module of dimension $m$. Then there exists a basis of $M$ such that $\Sc_{q}$ maps into $\Mat_m(\A)$ under this basis.
	\end{itemize}
\end{lem}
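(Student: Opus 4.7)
The plan is to establish (1) by a direct lattice argument and then deduce (2) by transferring to the asymptotic Schur algebra via the isomorphism $\Phi_\K$ from Corollary~\ref{b1}. The point of this detour is to bypass the fact that $\A=\Z[q,q^{-1}]$ is not a PID, which would obstruct a naive lattice construction inside $M$.

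For (1), I would choose any $\Q$-basis $v_1,\ldots,v_m$ of $M$ and set $L':=\sum_{i}\Z v_i$ and $L:=\mathbf{J}\cdot L'\subseteq M$. Since $\mathbf{J}$ is $\Z$-spanned by the finite set $\{t_C\mid C\in\Xi\}$, the submodule $L$ is finitely generated over $\Z$; as a $\Z$-submodule of the $\Q$-vector space $M$ it is torsion-free, hence free over the PID $\Z$. Because the identity $\sum_{D\in\mathcal D}t_D$ of $\mathbf{J}$ lies in $\mathbf{J}$ we have $L\supseteq L'$, so $L\otimes_\Z\Q=M$ and $\mathrm{rank}_{\Z}L=m$. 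Moreover $\mathbf{J}\cdot L=\mathbf{J}^2\cdot L'\subseteq\mathbf{J}\cdot L'=L$, so $L$ is $\mathbf{J}$-stable, and any $\Z$-basis of $L$ is the required $\Q$-basis of $M$.

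For (2), I would view $M$ as a $\mathbf{J}_\K$-module via the isomorphism $\Phi_\K:\Sc_{q,\K}\xrightarrow{\sim}\mathbf{J}_\K$ of Corollary~\ref{b1}. Since $\mathbf{J}_\Q$ is split semisimple with a complete set of irreducibles $\{E_\spadesuit^\lambda\mid\lambda\in\mathcal E\}$ defined over $\Q$, every finite-dimensional $\mathbf{J}_\K$-module is obtained by extension of scalars from a $\mathbf{J}_\Q$-module; write $M\cong M_\Q\otimes_\Q\K$ for some $\mathbf{J}_\Q$-module $M_\Q$ of $\Q$-dimension $m$. Apply (1) to $M_\Q$ to obtain a $\mathbf{J}$-stable, free $\Z$-lattice $L_\Z\subset M_\Q$ of rank $m$. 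Then $L_\A:=\A\otimes_\Z L_\Z\subset M_\Q\otimes_\Q\K=M$ is a $\mathbf{J}_\A$-stable free $\A$-lattice of rank $m$; composing with the algebra homomorphism $\Phi:\Sc_{q}\to\mathbf{J}_\A$ from \S\ref{sec:phipro}, $L_\A$ is also $\Sc_{q}$-stable. Any $\Z$-basis of $L_\Z$ gives an $\A$-basis of $L_\A$, hence a $\K$-basis of $M$ under which $\Sc_{q}$ acts by matrices in $\Mat_m(\A)$.

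The only delicate step is checking that the $\Sc_{q}$-action transported from $\mathbf{J}_\A$ through $\Phi$ agrees with the original $\Sc_{q,\K}$-action on $M$; but this is tautological once we recall that the $\mathbf{J}_\K$-module structure on $M$ was defined precisely by $\Phi_\K^{-1}$. I do not foresee any substantive obstacle: the proof rests on two ingredients already established, namely the split semisimplicity of $\mathbf{J}_\Q$ (providing descent from $\K$ to $\Q$) and the fact that $\Z$ is a PID (upgrading finite generation plus torsion-freeness to $\Z$-freeness).
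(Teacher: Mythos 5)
Your proposal is correct and follows essentially the same route as the paper: part (1) is the standard lattice construction (the paper simply cites the general result in \cite[\S7.3.7]{GP00}, which is exactly your argument of taking $L=\mathbf{J}\cdot L'$ and using that a finitely generated torsion-free $\Z$-module is free), and part (2) is deduced, as in the paper, by descending from $\K$ to $\Q$ via $\Phi_\K$ and the split semisimplicity of $\mathbf{J}_\Q$, applying (1), and using that $\Phi$ maps $\Sc_q$ into $\mathbf{J}_\A$.
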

\begin{proof}
	Item (1) follows from a general result described in \cite[\S7.3.7]{GP00}.

By the discussion at the end of \S\ref{sec:phipro}, we know that $M \cong \Phi_{\K}^*(M_\K')$ for some $\mathbf{J}_\Q$-module $M'$ of dimension $m$. Noting that $\Phi$ maps $\Sc$ into $\mathbf{J}_{\A}$, it is clear that (2) follows from (1).
\end{proof}

Thus for $\lambda \in \mathcal{E}$, we can denote by $\mathbf{a}_{\lambda}\in \Z$ the unique integer such that
$\Tr([A], E_{q}^{\lambda})\in q^{-\mathbf{a}_{\lambda}+\ell(w_\circ^{\co(A)})}\Z[q]$ for all $A \in \Xi$, but $\Tr([A], E_{q}^{\lambda})\notin q^{-\mathbf{a}_{\lambda}+\ell(w_\circ^{\co(A)})+1}\Z[q]$ for some $A \in \Xi$. For $\mathbf{c}\in B_0$,  let $\mathfrak{a}(\mathbf{c})$ be the common value of $\mathfrak{a}(A)$ for $A \in \mathbf{c}$ (cf. Theorem~\ref{thm:cell structures}(P4)).
\begin{prop}\label{c1}
\begin{itemize}
	\item[(1)] If $\lambda \in \mathcal{E}_{\mathbf{c}}$, then $\mathbf{a}_{\lambda}=\mathfrak{a}(\mathbf{c})$.
	\item[(2)] The constant term of $q^{\mathbf{a}_{\lambda}-\ell(w_\circ^{\co(A)})}\Tr([A], E_{q}^{\lambda})$ is $\Tr(t_A, E_{\spadesuit}^{\lambda})$ for all $A \in \Xi$.
\end{itemize}

\end{prop}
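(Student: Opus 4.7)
The plan is to transport the trace computation to the asymptotic algebra $\mathbf{J}_\K$ via the isomorphism $\Phi_\K$, exploiting the triangular relation between the standard and canonical bases. First, invert the expansion $\{C\} = [C] + \sum_{C' < C} P_{C',C}[C']$ from \eqref{eq:can} (with $P_{C',C} \in q\N[q]$) to get $[C] = \{C\} + \sum_{C' < C} Q_{C',C}\{C'\}$ with $Q_{C',C} \in q\Z[q]$, noting that $C' < C$ forces $\co(C') = \co(C)$. Hence it suffices to control the leading $q$-behaviour of $\Tr(\{C\}, E_q^\lambda)$ for every $C \in \Xi$.

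By the definition of $\Phi_\K$ together with the vanishing $t_A E_\spadesuit^\lambda = 0$ for $A$ outside the two-sided cell $\mathbf{c}$ (a consequence of $\lambda \in \mathcal{E}_\mathbf{c}$ and $\mathbf{J} = \bigoplus_{\mathbf{c}'} \mathbf{J}_{\mathbf{c}'}$), one has
\[
\Tr(\{C\}, E_q^\lambda) = \sum_{D \in \mathcal{D}_\mathbf{c},\; A \in \mathbf{c}} g_{C,D}^{A}\, \Tr(t_A, E_\spadesuit^\lambda).
\]
For $A \in \mathbf{c}$, $\mathfrak{a}(A) = \mathfrak{a}(\mathbf{c})$, so by the defining property of the $\gamma$-coefficients, $q^{\mathfrak{a}(\mathbf{c}) - \ell(w_\circ^{\co(C)})} g_{C,D}^{A} \in \Z[q]$ with constant term $\gamma_{C,D}^{A^t}$. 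Extending the $A$-sum trivially to all of $\Xi$ (the added summands vanish because $\Tr(t_A, E_\spadesuit^\lambda)=0$ for $A \notin \mathbf{c}$) and recognising $\sum_{A \in \Xi} \gamma_{C,D}^{A^t} t_A = t_C t_D$, the constant term of $q^{\mathfrak{a}(\mathbf{c}) - \ell(w_\circ^{\co(C)})} \Tr(\{C\}, E_q^\lambda)$ equals $\sum_{D \in \mathcal{D}_\mathbf{c}} \Tr(t_C t_D, E_\spadesuit^\lambda) = \Tr(t_C t_\mathbf{c}, E_\spadesuit^\lambda) = \Tr(t_C, E_\spadesuit^\lambda)$, using that $t_\mathbf{c}$ acts as the identity on $E_\spadesuit^\lambda$ (both sides vanish when $C \notin \mathbf{c}$).

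Plugging this back into the inversion formula and using $Q_{C',C} \in q\Z[q]$ together with $\co(C')=\co(C)$, the corrections land in $q\Z[q]$, yielding
\[
q^{\mathfrak{a}(\mathbf{c}) - \ell(w_\circ^{\co(C)})}\, \Tr([C], E_q^\lambda) \;\in\; \Tr(t_C, E_\spadesuit^\lambda) + q\Z[q] \qquad (\forall C \in \Xi).
\]
This at once gives $\mathbf{a}_\lambda \leq \mathfrak{a}(\mathbf{c})$. For the reverse inequality, $\dim E_\spadesuit^\lambda = \Tr(t_\mathbf{c}, E_\spadesuit^\lambda) = \sum_{D \in \mathcal{D}_\mathbf{c}} \Tr(t_D, E_\spadesuit^\lambda) \neq 0$ forces some $\Tr(t_D, E_\spadesuit^\lambda) \neq 0$ with $D \in \mathcal{D}_\mathbf{c}$; taking $C = D$ in the display shows the constant term is nonzero, so $\mathbf{a}_\lambda \geq \mathfrak{a}(\mathbf{c})$. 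This proves (1), and (2) is precisely the constant-term formula.

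The main technical care needed is the leading-term bookkeeping in the second paragraph: I must simultaneously invoke the bound $q^{\mathfrak{a}(A) - \ell(w_\circ^{\co(C)})} g_{C,D}^{A} \in \Z[q]$ with constant term $\gamma_{C,D}^{A^t}$, the assembly $\sum_A \gamma_{C,D}^{A^t} t_A = t_C t_D$ in $\mathbf{J}$, and the identity-element property of $t_\mathbf{c}$ in $\mathbf{J}_\mathbf{c}$, so as to reduce cleanly to the algebra structure of $\mathbf{J}_\mathbf{c}$; once these pieces are lined up, everything else is formal.
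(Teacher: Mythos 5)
Your proposal is correct and follows essentially the same route as the paper: compute $\Tr(\{C\},E_q^\lambda)$ via $\Phi_\K$, restrict the sum to $\mathbf{c}$ using $\lambda\in\mathcal{E}_{\mathbf{c}}$, extract the lowest-order coefficient of $g_{C,D}^{A}$ as $\gamma_{C,D}^{A^t}$, identify the resulting sum with $\Tr(t_C t_{\mathbf{c}},E_{\spadesuit}^\lambda)=\Tr(t_C,E_{\spadesuit}^\lambda)$, and pass from $\{C\}$ to $[C]$ by the unitriangular change of basis (using $\co(C')=\co(C)$ for $C'<C$). Your nonvanishing argument via $\Tr(t_{\mathbf{c}},E_{\spadesuit}^\lambda)=\dim E_{\spadesuit}^\lambda$ is a slightly more explicit justification of the step the paper states directly, but the proofs are the same in substance.
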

\begin{proof}
For any $A\in\Xi$,
\begin{align*}
	\Tr(\{A\}, E_{q}^{\lambda})= \Tr(\Phi_{\K}(\{A\}), E_{\spadesuit,\K}^{\lambda})=\sum\limits_{\mathbf{c'}\in B_{0}, D\in \mathcal{D}_{\mathbf{c'}}, B\in \mathbf{c'}}g_{A, D}^{B}\Tr(t_{B}, E_{\spadesuit}^{\lambda}).
\end{align*}
Since $E_{\spadesuit}^{\lambda}\in \mathcal{E}_{\mathbf{c}}$, we have $\Tr(t_{B}, E_{\spadesuit}^{\lambda})=0$ unless $B\in \mathbf{c}$. Thus, we have
\begin{align*}
	\Tr(\{A\}, E_{q}^{\lambda})=&\sum\limits_{D\in \mathcal{D}_{\mathbf{c}}, B\in \mathbf{c}}g_{A, D}^{B}\Tr(t_{B}, E_{\spadesuit}^{\lambda})\\
	\equiv & \bigg(\sum\limits_{D\in \mathcal{D}_{\mathbf{c}}, B\in \mathbf{c}} \gamma_{A, D}^{B^{t}}\Tr(t_{B}, E_{\spadesuit}^{\lambda})\bigg)q^{-\mathfrak{a}(\mathbf{c})+\ell(w_\circ^{\co(A)})} \quad \mathrm{mod~} q^{-\mathfrak{a}(\mathbf{c})+\ell(w_\circ^{\co(A)})+1}\Z[q]\\
	=&\Tr(t_{A}, E_{\spadesuit}^{\lambda})q^{-\mathfrak{a}(\mathbf{c})+\ell(w_\circ^{\co(A)})} \quad \mathrm{mod~} q^{-\mathfrak{a}(\mathbf{c})+\ell(w_\circ^{\co(A)})+1}\Z[q].
\end{align*}
Here the last equality follows from Theorem~\ref{thm:cell structures}(P2, P3, P5, P7).	

It follows from \eqref{eq:can} that $[A]=\{A\}+\sum_{A'<A} \Z[q]\{ A'\}$.
	Therefore,
	\begin{align*}
		\Tr([A], E_{q}^{\lambda})\equiv \Tr(\{A\}, E_{q}^{\lambda})\equiv\Tr(t_{A}, E_{\spadesuit}^{\lambda})q^{-\mathfrak{a}(\mathbf{c})+\ell(w_\circ^{\co(A)})} \quad \mathrm{mod~} q^{-\mathfrak{a}(\mathbf{c})+\ell(w_\circ^{\co(A)})+1}\Z[q].
	\end{align*}
	Since $\lambda \in \mathcal{E}_{\mathbf{c}}$, we have $\Tr(t_A, E_{\spadesuit}^{\lambda})\neq 0$ for some $A\in \mathbf{c}$. Thus, the above formula shows that $\mathbf{a}_{\lambda}=\mathfrak{a}(\mathbf{c})$, and hence the statement (2).
\end{proof}

\begin{cor}\label{c6}
	It holds that
	\begin{align*}
		p_\lambda=f_\lambda q^{-2\mathbf{a}_\lambda}+\mbox{higher $q$-power terms},\quad\mbox{and}\quad
d_\lambda f_\lambda=\sum_{A \in \Xi} \Tr(t_A,E_{\spadesuit}^\lambda)\Tr(t_{A^t},E_{\spadesuit}^\lambda).
	\end{align*}
\end{cor}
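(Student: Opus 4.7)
The plan is to read off both statements from the orthogonality relation \eqref{d2} after substituting the leading-order trace expansion of Proposition \ref{c1}(2).

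For each $A = (\gamma,g,\nu)\in\Xi$, Proposition \ref{c1}(2) gives
\[
\Tr([A], E_q^\lambda) = \Tr(t_A, E_\spadesuit^\lambda)\, q^{-\mathbf{a}_\lambda + \ell(w_\circ^\nu)} + (\text{strictly higher $q$-powers}),
\]
and analogously for $\Tr([A^t], E_q^\lambda)$ with $\gamma$ in place of $\nu$. Hence the product $\Tr([A], E_q^\lambda)\Tr([A^t], E_q^\lambda)$ lies in $q^{-2\mathbf{a}_\lambda + \ell(w_\circ^\gamma) + \ell(w_\circ^\nu)}\Z[q]$ with leading (lowest-$q$-power) coefficient $\Tr(t_A, E_\spadesuit^\lambda)\Tr(t_{A^t}, E_\spadesuit^\lambda)$.

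Next, I would analyze the prefactor $q^{\ell(w_\circ^\gamma)+\ell(w_\circ^\nu)-2\ell(w_A^+)}\mathfrak{P}_{W_\gamma\cap gW_\nu g^{-1}}$ appearing in \eqref{d2}, combining it with the leading trace expansion. Using the standard length identity $\ell(w_A^+) = \ell(w_\circ^\gamma) + \ell(g) + \ell(w_\circ^\nu) - \ell(w_0^{\gamma,g,\nu})$ for the longest element of the double coset $W_\gamma g W_\nu$ (where $w_0^{\gamma,g,\nu}$ denotes the longest element of the reflection subgroup $W_\gamma \cap gW_\nu g^{-1}$), together with the fact that $\mathfrak{P}_Y = \sum_{w\in Y}q^{-2\ell(w)}$ has lowest-degree term $q^{-2\ell(w_0^Y)}$ of coefficient $1$, one verifies that each $A$-summand of the LHS of \eqref{d2} has lowest $q$-power equal to $-2\mathbf{a}_\lambda$ with coefficient $\Tr(t_A, E_\spadesuit^\lambda)\Tr(t_{A^t}, E_\spadesuit^\lambda)$, and no strictly smaller $q$-power appears.

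Summing over $A \in \Xi$ then yields
\[
d_\lambda p_\lambda = \Big(\sum_{A\in\Xi}\Tr(t_A, E_\spadesuit^\lambda)\Tr(t_{A^t}, E_\spadesuit^\lambda)\Big)q^{-2\mathbf{a}_\lambda} + (\text{higher $q$-powers}),
\]
from which both assertions of the corollary follow at once: the lowest $q$-power of $p_\lambda$ is $-2\mathbf{a}_\lambda$, and its coefficient $f_\lambda$ satisfies $d_\lambda f_\lambda = \sum_{A\in\Xi}\Tr(t_A, E_\spadesuit^\lambda)\Tr(t_{A^t}, E_\spadesuit^\lambda)$.

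The main obstacle is the numerical bookkeeping in the second step: one must verify that the $\ell(g)$-dependence built into $\ell(w_A^+)$ is exactly compensated by the lowest-degree shift of the Poincar\'{e} polynomial $\mathfrak{P}_{W_\gamma\cap gW_\nu g^{-1}}$, so that the leading contribution of every $A$-summand lands at precisely $-2\mathbf{a}_\lambda$, uniformly in $g$. Once this uniform cancellation is carried out, the identification of the coefficient $f_\lambda$ is immediate from reading off the $q^{-2\mathbf{a}_\lambda}$-coefficient of the resulting expression.
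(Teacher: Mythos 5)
Your overall strategy is the same as the paper's: substitute the leading-order expansion of $\Tr([A],E_q^{\lambda})$ from Proposition \ref{c1}(2) into the orthogonality relation \eqref{d2} and read off the coefficient of $q^{-2\mathbf{a}_\lambda}$; the paper's proof is precisely this computation.

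The problem is that the step you defer to the end --- the ``uniform cancellation'' --- is the entire content of the argument, and as you have set it up it does not close. Let $u_0$ denote the longest element of $W_\gamma\cap gW_\nu g^{-1}$, so that the lowest $q$-power of $\mathfrak{P}_{W_\gamma\cap gW_\nu g^{-1}}$ is $q^{-2\ell(u_0)}$ with coefficient $1$, and your length identity reads $\ell(w_A^+)=\ell(w_\circ^\gamma)+\ell(g)+\ell(w_\circ^\nu)-\ell(u_0)$. Then the lowest power of the prefactor in \eqref{d2} is
$$\ell(w_\circ^\gamma)+\ell(w_\circ^\nu)-2\ell(w_A^+)-2\ell(u_0)=-\ell(w_\circ^\gamma)-\ell(w_\circ^\nu)-2\ell(g),$$
because the two occurrences of $\ell(u_0)$ cancel \emph{each other} and the term $-2\ell(g)$ survives. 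So each $A$-summand would sit at $q^{-2\mathbf{a}_\lambda-2\ell(g)}$, an exponent that varies with $A$, and the leading coefficients would not assemble into $\sum_{A}\Tr(t_A,E_{\spadesuit}^\lambda)\Tr(t_{A^t},E_{\spadesuit}^\lambda)$. The discrepancy originates upstream of your argument: the correct identity is $\mathfrak{P}_{W_\gamma gW_\nu}=q^{-2\ell(g)}\,\mathfrak{P}_{W_\gamma}\mathfrak{P}_{W_\nu}/\mathfrak{P}_{W_\gamma\cap gW_\nu g^{-1}}$ (test it with $W=S_3$, $W_\gamma=W_\nu=\langle s_1\rangle$, $g=s_2$), so the denominator in \eqref{eq:bi} and hence the prefactor in \eqref{d2} should carry an extra factor $q^{2\ell(g)}$. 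With that correction the lowest power of the prefactor becomes exactly $-\ell(w_\circ^\gamma)-\ell(w_\circ^\nu)$ with coefficient $1$, every summand lands at $-2\mathbf{a}_\lambda$, and the rest of your argument (which then coincides with the paper's) goes through. As written, however, the cancellation you assert is false, so the proof has a genuine gap at exactly the point you flagged as the main obstacle.
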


\begin{proof}
	The statement follows from a direct computation as follows:
	\begin{align*}
		d_\lambda p_\lambda=&\sum_{A=(\gamma,g,\nu)  \in \Xi} q^{\ell(w_\circ^\gamma)+\ell(w_\circ^\nu)-2\ell(w_A^+)}\mathfrak{P}_{W_\gamma \cap g W_\nu g^{-1}}\Tr([A],E_q^\lambda)\Tr([A^t],E_q^\lambda)\\
		\equiv&\sum_{A \in \Xi} q^{-\ell(w_\circ^{\ro(A)})-\ell(w_\circ^{\co(A)})}\Tr([A],E_q^\lambda)\Tr([A^t],E_q^\lambda) \mod q^{-2\mathbf{a}_\lambda+1}\mathbb{Z}[q]\\
		=&\sum_{A \in \Xi} \Tr(t_A,E_{\spadesuit}^\lambda)\Tr(t_{A^t},E_{\spadesuit}^\lambda)q^{-2\mathbf{a}_\lambda} \mod q^{-2\mathbf{a}_\lambda+1}\mathbb{Z}[q]\quad \mbox{(By Proposition \ref{c1})}\\
		=&d_\lambda f_\lambda q^{-2\mathbf{a}_\lambda} \mod q^{-2\mathbf{a}_\lambda+1}\mathbb{Z}[q].
	\end{align*}
\end{proof}

\section{Cellularity of $q$-Schur algebras}
\label{sec:cellularity}
In \cite{Ge07}, Geck showed that Hecke algebras of finite type are cellular in the sense of Graham and Lehrer \cite{GL96}. In this section, we shall follow Geck's approach to prove that $q$-Schur algebras of finite type are cellular, too.

\subsection{The matrix representation $\rho^\lambda$ and the matrix $P^\lambda$}
\label{subsec:rep}
Recall that in \S\ref{sec:phipro}, we define a non-degenerate symmetric associative bilinear form $(\cdot,\cdot)$ on $\mathbf{J}$, with respect to which $\{t_A ~|~ A \in \Xi\}$ and $\{t_{A^t} ~|~ A \in \Xi\}$ form a pair of dual bases. Choose a $\Q$-basis of $E_{\spadesuit}^{\lambda}$. We obtain a matrix representation
\begin{align*}
	\rho^\lambda:\mathbf{J}_{\Q} \rightarrow \Mat_{d_\lambda}(\mathbb{\Q}), \quad t_A \mapsto (\rho_\mathfrak{st}^\lambda(t_A))_{\mathfrak{s,t} \in M(\lambda)}, \quad\mbox{where $M(\lambda)=\{1,\ldots, d_\lambda\}$}.
\end{align*}
Moreover, thanks to Lemma~\ref{integ}, we can let $\rho^\lambda(\mathbf{J}) \subset \Mat_{d_\lambda}(\mathbb{\Z})$.
For $\lambda, \lambda' \in \mathcal{E}, \mathfrak{s,t} \in M(\lambda), \mathfrak{u,v} \in M(\lambda')$, by Corollary \ref{c6} and \cite[Corollary 7.2.2]{GP00}, the following Schur relation holds:
\begin{align}
	\label{c2}
	\sum_{A\in \Xi} \rho_\mathfrak{st}^\lambda(t_A) \rho_\mathfrak{uv}^{\lambda'}(t_{A^t})=\delta_{(\lambda,\mathfrak{s,t}),(\lambda',\mathfrak{v,u})} f_\lambda,
\end{align}
which is equivalent to
\begin{align}\label{c3}
	\sum_{\lambda\in \mathcal{E}} \sum_{\mathfrak{s} \in M(\lambda)} \frac{1}{f_\lambda}(\rho^\lambda(t_A) \rho^{\lambda}(t_{B}))_{\mathfrak{s,s}}=\sum_{\lambda\in \mathcal{E}} \sum_{\mathfrak{s,t} \in M(\lambda)} \frac{1}{f_\lambda}\rho_\mathfrak{st}^\lambda(t_A) \rho_\mathfrak{ts}^{\lambda}(t_{B})
	=\delta_{A, B^t}.
\end{align}

We have the following proposition, whose proof is similar to the one of \cite[Proposition 2.6]{Ge07}.
%


\begin{prop}\label{c4}
	There exists a symmetric, positive-definite matrix
	$$P^\lambda=(\beta_\mathfrak{st}^\lambda)_{\mathfrak{s,t} \in M(\lambda)}\in \mathrm{Mat}_{d_\lambda}(\Z)$$
	such that the following two conditions hold:
	\begin{itemize}
		\item[(a)] $P^\lambda \cdot \rho^\lambda(t_{A^t})=(\rho^\lambda(t_A))^{\mathrm{tr}} \cdot P^\lambda$ for all $A \in \Xi$, where $\mathrm{tr}$ means the transpose;
		\item[(b)] Any prime which divides $\det(P^\lambda)$ is bad.
	\end{itemize}
\end{prop}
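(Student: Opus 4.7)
The proof will follow Geck's strategy in \cite[Proposition~2.6]{Ge07}, adapted to the asymptotic Schur algebra $\mathbf{J}$. A preliminary step is the observation that the map $\sigma: t_A \mapsto t_{A^t}$ is an involutive anti-automorphism of $\mathbf{J}$: anti-multiplicativity follows from $\gamma_{A,B}^{C^t} = \gamma_{B^t, A^t}^{C}$ in Corollary~\ref{lem:d=d3}(2), and involutivity is clear. Consequently $t_A \mapsto \rho^\lambda(t_{A^t})^{\mathrm{tr}}$ is a matrix representation of $\mathbf{J}_\mathbb{Q}$ with character $\chi^\lambda \circ \sigma$. Applying the Schur orthogonality \eqref{c2} to characters yields $\sum_{A\in\Xi} \chi^\lambda(t_A)\chi^{\lambda'}(t_{A^t}) = \delta_{\lambda\lambda'} d_\lambda f_\lambda$; combined with the integrality $\chi^\lambda(t_A) \in \mathbb{Z}$ (Lemma~\ref{integ}) and linear independence of irreducible characters, this forces $\sigma$ to fix the Wedderburn labelling, i.e.\ $\sigma(\mathbf{J}^\lambda) = \mathbf{J}^\lambda$. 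With absolute irreducibility of $E_\spadesuit^\lambda$ (which holds since $\mathbf{J}_\mathbb{Q}$ is split semisimple by Corollary~\ref{b1}), Schur's lemma then produces an invertible $P^\lambda \in \mathrm{Mat}_{d_\lambda}(\mathbb{Q})$ satisfying~(a), unique up to scalar.

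For the remaining properties I would realize $P^\lambda$ as the Gram matrix of the essentially unique $\sigma$-invariant bilinear form on $E_\spadesuit^\lambda$. Symmetry is forced by uniqueness: $(P^\lambda)^{\mathrm{tr}}$ also satisfies~(a), so $(P^\lambda)^{\mathrm{tr}} = \epsilon P^\lambda$ with $\epsilon = \pm 1$, and the skew case $\epsilon = -1$ (i.e.\ $\sigma$ restricting to a symplectic involution on $\mathbf{J}^\lambda \cong \mathrm{Mat}_{d_\lambda}(\mathbb{Q})$) will be excluded by the explicit construction. Concretely, fix a self-adjoint primitive idempotent $e_\lambda \in \mathbf{J}_\mathbb{Q}^\lambda$ (existing because $\sigma|_{\mathbf{J}_\mathbb{Q}^\lambda}$ is an orthogonal-type involution) lying in the left cell attached to a distinguished $D \in \mathcal{D}$, identify $E_\spadesuit^\lambda \cong \mathbf{J}_\mathbb{Q} e_\lambda$ as left modules, and define $\langle xe_\lambda, ye_\lambda \rangle := \tau(\sigma(x) y e_\lambda)$. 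This form is $\sigma$-invariant and symmetric, the latter via cyclicity and $\sigma$-invariance of $\tau$ (both following directly from $\tau(t_A t_B) = \delta_{A, B^t}$). Expressing it in the integral basis provided by Lemma~\ref{integ} and clearing content yields $P^\lambda \in \mathrm{Mat}_{d_\lambda}(\mathbb{Z})$.

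Property~(b), together with positive-definiteness, constitutes the main technical obstacle. Positivity should reduce, via the orthogonal decomposition $1 = \sum_{D\in\mathcal{D}} t_D$ and $t_D^2 = t_D$, to positivity of integer invariants built from $\tau$; the essential input is $\tau(t_D)=1$ coupled with the left-cell structure from Theorem~\ref{thm:cell structures}. For the determinant bound I would use \eqref{c2} and Corollary~\ref{c6} to show $\rho^\lambda\bigl(\sum_{A \in \Xi} t_A\,\sigma(t_A)\bigr) = f_\lambda \cdot I_{d_\lambda}$, and then compare the integral basis to the ``Schur-orthogonal'' basis of $\mathbf{J}_\mathbb{Q}^\lambda$ (in which the matrix units satisfy \eqref{c3}) to conclude that $\det(P^\lambda)$ divides some power of $f_\lambda$; hence every prime dividing $\det(P^\lambda)$ divides $f_\lambda$ and is therefore bad by definition. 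I expect the hardest step to be maintaining integrality, positive-definiteness and the determinant divisibility simultaneously, since this depends on a careful choice of integral lattice in $E_\spadesuit^\lambda$, analogous to Geck's ``leading matrix coefficients'' in the Hecke-algebra setting, which in our context is guided by Proposition~\ref{prop:d=d2} and Theorem~\ref{thm:cell structures}.
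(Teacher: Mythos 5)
Your overall framework (the anti-involution $t_A\mapsto t_{A^t}$ from Corollary~\ref{lem:d=d3}(2), equality of the characters $\chi^\lambda$ and $\chi^\lambda\circ\sigma$ via the Schur relations, and Schur's lemma producing a rational intertwiner $P^\lambda$ unique up to scalar) is correct and is indeed the Geck-style setting. But the three properties that carry the actual content of the proposition --- symmetry, positive-definiteness, integrality together with condition (b) --- are exactly the points you defer, and as written each has a gap. For symmetry you invoke a ``self-adjoint primitive idempotent $e_\lambda$, existing because $\sigma|_{\mathbf{J}^\lambda}$ is an orthogonal-type involution''; this is circular, since whether the involution is of orthogonal or symplectic type is precisely what rules out $\epsilon=-1$. (It can be salvaged: $\langle x,y\rangle=\tau(\sigma(x)y)$ is positive definite on $\mathbf{J}_{\mathbb R}$ because $\tau(t_{A^t}t_B)=\delta_{A,B}$, and an involution adjoint to a positive form is orthogonal --- but you never state this, and note that $t_D$ itself is generally \emph{not} primitive in $\mathbf{J}_\Q$ since $\Tr(t_D,E^\lambda_\spadesuit)=[E^\lambda:[\Gamma]]$ can exceed $1$.) Positive-definiteness of the resulting Gram matrix is only asserted to ``reduce to positivity of integer invariants built from $\tau$''. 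Most seriously, for (b) you propose to show $\det(P^\lambda)$ divides a power of $f_\lambda$ by ``comparing the integral basis to the Schur-orthogonal basis''; the Schur-orthogonal matrix units are not integral (they carry $1/f_\lambda$), so no divisibility statement follows without a genuine lattice argument that you do not supply. You correctly flag this as the hardest step, but flagging it is not proving it.

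The paper avoids all three difficulties with one explicit choice: $P_1=\sum_{B\in\Xi}\rho^\lambda(t_B)^{\mathrm{tr}}\rho^\lambda(t_B)$, which is integral (Lemma~\ref{integ}), visibly symmetric, and positive definite by irreducibility of $\rho^\lambda$; property (a) is then a short computation from (P7) and Corollary~\ref{lem:d=d3}(2), and one divides by the gcd of the entries. Condition (b) is proved not by a determinant-divisibility computation but by reduction modulo a good prime $p$: since the gcd is $1$, $\bar P\neq 0$, and $\bar P$ intertwines $\bar\rho$ with $t_A\mapsto\bar\rho(t_{A^t})^{\mathrm{tr}}$; for good $p$ the reduction $\bar\rho$ stays irreducible (the symmetric-algebra criterion, $f_\lambda$ being a unit in $\F_p$), so Schur's lemma forces $\bar P$ invertible and $p\nmid\det(P^\lambda)$. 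I would recommend replacing your construction by this one, or at minimum supplying (i) a non-circular proof that $\sigma$ is of orthogonal type on the block, and (ii) a complete argument for the determinant claim, e.g.\ the mod-$p$ irreducibility argument above.
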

\begin{proof} For convenience, we write $E=E_{\spadesuit}^{\lambda}$, $d=d_\lambda$ and $\rho=\rho^\lambda$.
	
	We set $$P_1=\sum_{B \in \Xi} \rho(t_B)^{\mathrm{tr}}  \cdot\rho(t_B) \in \Mat_d(\Z),$$ which is clearly a symmetric matrix. Take any nonzero vector $\alpha=(a_1,\ldots,a_d) \in \Z^d$. Since $\rho$ is irreducible, there exists some $B \in \Xi$ such that $\alpha\cdot\rho(t_B)^{\mathrm{tr}} \neq 0$. Hence $eP_1 e^{\mathrm{tr}}>0$, which means that $P_1$ is positive-definite. For any $A \in \Xi$, by Theorem~\ref{thm:cell structures}(P7) and Corollary \ref{lem:d=d3}(2) we have
	\begin{align*}
		P_1 \cdot \rho(t_{A^t})&=\sum_B \rho(t_B)^{\mathrm{tr}} \cdot \rho(t_B t_{A^t})=\sum_{B,C} \gamma_{B,A^t}^{C^t} \rho(t_B)^{\mathrm{tr}} \cdot \rho(t_C)\\
		&=\sum_{B,C} \gamma_{A^t,C^t}^B \rho(t_B)^{\mathrm{tr}} \cdot \rho(t_C)=\sum_{B,C} \gamma_{C,A}^{B^t} \rho(t_B)^{\mathrm{tr}} \cdot \rho(t_C)\\
		&=\sum_C \rho(t_C t_A)^{\mathrm{tr}} \cdot \rho(t_C)=\rho(t_A)^{\mathrm{tr}} \cdot P_1.
	\end{align*}
	
	Let $n \in \N$ be the greatest common divisor of all entries of $P_1$ and let $P=n^{-1}P_1 \in \Mat_{d}(\Z)$. Then $P$ is a symmetric, positive-definite matrix such that the condition (a) holds. It remains to prove that $P$ satisfies the condition $\mathrm{(b)}$. Let $p$ be a prime number and denote by $\bar{P}$ the matrix obtained from $P$ by reducing all entries modulo $p$. By reduction modulo $p$, we also obtain an $\F_p$-algebra $\mathbf{J}_p=\F_p \otimes_\Z \mathbf{J}$ and a corresponding matrix representation $\bar{\rho}: \mathbf{J}_p \rightarrow \Mat_{d}(\F_p)$. Hence we have $$\bar{P} \neq 0 \quad \mbox{and} \quad \bar{P} \cdot \bar{\rho}(t_{A})=\bar{\rho}(t_{A^t})^{\mathrm{tr}} \cdot \bar{P} \quad \mbox{for all} ~ A \in \Xi.$$
	
	By Corollary \ref{lem:d=d3}(2) again, we see that the map $t_A \mapsto t_{A^t}$ defines an anti-involution on $\mathbf{J}$. Hence the assignment $t_A \mapsto \bar{\rho}(t_{A^t})^{\mathrm{tr}}$ also defines a representation of $\mathbf{J}_p$, and the above identity shows that $\bar{P} \neq 0$ is an ``intertwining operator". Hence, if we knew that $\bar{\rho}$ was irreducible, then Schur's Lemma would imply that $\bar{P}$ was invertible and so $p$ could not divide $\det(P)$.
	
	Thus it remains to show that $\bar{\rho}$ is an irreducible representation of $\mathbf{J}_p$ whenever $p$ is good. But this follows from a general argument about symmetric algebras, noting that $\mathbf{J}$ is symmetric (cf. \cite[Remark 7.2.3]{GP00}).
\end{proof}

\subsection{Cellular algebras}
We now fix a ring $R \subset \C$ where all bad primes are invertible. Denote $\underline{\A}=R[q, q^{-1}]$. In the sequel of this section, we shall consider the $q$-Schur algebras $\Sc_{q,\underline{\A}}$.

In order to show that $\Sc_{q,\underline{\A}}$ is cellular in the sense of Graham-Lehrer \cite[Definition 1.1]{GL96}, we must
specify a quadruple $(\mathcal{E}, M, C, \Psi)$ satisfying the following conditions.

$\mathrm{(C1)}$ $\mathcal{E}$ is a partially ordered set with a partial order $\prec$, $\{M(\lambda)\:|\:\lambda\in \mathcal{E}\}$ is a collection of finite
sets and
\begin{align*}
C: \coprod_{\lambda\in \mathcal{E}} M(\lambda) \times M(\lambda)\rightarrow \Sc_{q,\underline{\A}}
\end{align*}
is an injective map whose image is an $\underline{\A}$-basis of $\Sc_{q,\underline{\A}}$;

$\mathrm{(C2)}$ If $\lambda\in \mathcal{E}$ and $\mathfrak{s,t} \in M(\lambda)$, write $C(\mathfrak{s}, \mathfrak{t})=C_{\mathfrak{s}, \mathfrak{t}}^{\lambda}\in \Sc_{q}$. Then $\Psi: \Sc_{q,\underline{\A}} \rightarrow \Sc_{q,\underline{\A}}$ is an $\underline{\A}$-linear anti-involution such that $\Psi(C_{\mathfrak{s}, \mathfrak{t}}^{\lambda})=C_{\mathfrak{t}, \mathfrak{s}}^{\lambda}$.

$\mathrm{(C3)}$ If $\lambda\in \mathcal{E}$ and $\mathfrak{s,t} \in M(\lambda)$, then for any element $h \in \Sc_{q,\underline{\A}}$,
\begin{align*}
hC_{\mathfrak{s}, \mathfrak{t}}^{\lambda}\equiv \sum\limits_{\mathfrak{s}'\in M(\lambda)} r_{h}(\mathfrak{s}', \mathfrak{s})C_{\mathfrak{s}', \mathfrak{t}}^{\lambda}\quad \mathrm{mod}~\Sc_{q}(\prec\lambda),
\end{align*}
where $r_{h}(\mathfrak{s}', \mathfrak{s})\in \underline{\A}$ is independent of $\mathfrak{t}$, and $\Sc_{q}(\prec\lambda)$ is the
$\underline{\A}$-submodule of $\Sc_{q}$ generated by $\{C_{\mathfrak{s}'', \mathfrak{t}''}^{\lambda'}\:|\:\lambda'\prec\lambda; \mathfrak{s}'', \mathfrak{t}''\in M(\lambda')\}$.

\subsection{Cellularity}
We now define a required quadruple $(\mathcal{E}, M, C, \Psi)$ as follows.

Let $\mathcal{E}$ be the indexing set introduced in \S\ref{sec:phipro}. Using the $\mathbf{a}$-invariants in Proposition \ref{c1}, we define a partial order $\prec$ on $\mathcal{E}$ by $\lambda \prec \lambda'$ if and only if $\mathbf{a}_{\lambda}> \mathbf{a}_{\lambda'}$. An $\underline{\A}$-linear anti-involution $\Psi$ on $\Sc_{q,\underline{\A}}$ can be induced from the one introduced in \eqref{eq:psi}. For $\lambda\in \mathcal{E}$, we set $M(\lambda)=\{1,\ldots, d_\lambda\}$ as before.

Let $\{\rho_\mathfrak{st}^\lambda(t_A)\}$ and $\{\beta_\mathfrak{st}^\lambda\}$ be defined as in \S\ref{subsec:rep}. For $\lambda\in \mathcal{E}$ and $\mathfrak{s,t} \in M(\lambda)$, we define
\begin{equation}\label{cst}
C_\mathfrak{s,t}^\lambda:=\sum_{A\in \Xi} (P^\lambda \cdot \rho^\lambda(t_{A^t}))_\mathfrak{t,s} \{A\}=\sum_{A \in \Xi} \sum_{\mathfrak{u}\in M(\lambda)} \beta_\mathfrak{tu}^\lambda \rho_\mathfrak{us}^\lambda(t_{A^t})\{A\}.
\end{equation}

\begin{thm}\label{e5}
The Schur algebra $\Sc_{q,\underline{\A}}$ is cellular and the quadruple $(\mathcal{E}, M, C, \Psi)$ is the desired cell datum.
\end{thm}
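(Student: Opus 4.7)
The plan is to verify the cellularity axioms (C1)--(C3) in turn, closely following Geck's strategy \cite{Ge07}. Split semisimplicity of $\Sc_{q,\K}$ gives $\sum_\lambda d_\lambda^2 = |\Xi| = \dim_{\underline{\A}} \Sc_{q,\underline{\A}}$, so (C1) reduces to showing that the transition matrix between $\{C^\lambda_{\mathfrak{s},\mathfrak{t}}\}$ and $\{\{A\}\}$ is invertible over $\underline{\A}$. The Schur relation \eqref{c2} says the matrix $M = \bigl(\rho^\lambda_{\mathfrak{u},\mathfrak{s}}(t_{A^t})\bigr)$ indexed by $A$ and $(\lambda,\mathfrak{s},\mathfrak{u})$ satisfies $M^{\mathrm{tr}} M$ equal to $\bigoplus_\lambda f_\lambda I_{d_\lambda^2}$ (up to permutation); since $R$ inverts all bad primes and each $f_\lambda$ is (up to sign) a product of bad primes, $\det(M)^2$ is a unit in $R \subset \underline{\A}$. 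Combined with the invertibility of $P^\lambda$ over $R$ from Proposition \ref{c4}(b), this yields (C1). For (C2), the substitution $A\mapsto A^t$ in the defining formula gives $\Psi(C^\lambda_{\mathfrak{s},\mathfrak{t}}) = \sum_B (P^\lambda \rho^\lambda(t_B))_{\mathfrak{t},\mathfrak{s}}\{B\}$; the intertwining property $P^\lambda \rho^\lambda(t_{B^t}) = \rho^\lambda(t_B)^{\mathrm{tr}} P^\lambda$ of Proposition \ref{c4}(a), together with the symmetry of $P^\lambda$, gives $(P^\lambda \rho^\lambda(t_B))_{\mathfrak{t},\mathfrak{s}} = (P^\lambda \rho^\lambda(t_{B^t}))_{\mathfrak{s},\mathfrak{t}}$, matching the coefficient of $\{B\}$ in $C^\lambda_{\mathfrak{t},\mathfrak{s}}$.

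For (C3), the decisive step is a matrix-unit description in $\mathbf{J}_\Q$. By the block decomposition $\mathbf{J} = \bigoplus_{\mathbf{c}} \mathbf{J}_{\mathbf{c}}$ combined with Lemma \ref{lem:part}, $\rho^\lambda(t_A) = 0$ unless $A$ lies in the unique two-sided cell $\mathbf{c}$ with $\lambda \in \mathcal{E}_{\mathbf{c}}$, so $C^\lambda_{\mathfrak{s},\mathfrak{t}}$ is supported on $\{A : \mathfrak{a}(A) = \mathbf{a}_\lambda\}$. The Schur orthogonality \eqref{c2} shows that the elements $e^\lambda_{\mathfrak{u},\mathfrak{v}} := f_\lambda^{-1}\sum_A \rho^\lambda_{\mathfrak{v},\mathfrak{u}}(t_A)\, t_{A^t}$ form a complete family of matrix units of $\mathbf{J}_\Q$ (i.e.\ $\rho^{\lambda'}(e^\lambda_{\mathfrak{u},\mathfrak{v}}) = \delta_{\lambda,\lambda'} E_{\mathfrak{u},\mathfrak{v}}$), and a short computation via the substitution $A\mapsto A^t$ gives
\begin{align*}
\widetilde{C}^\lambda_{\mathfrak{s},\mathfrak{t}} \; := \; \sum_A (P^\lambda \rho^\lambda(t_{A^t}))_{\mathfrak{t},\mathfrak{s}}\, t_A \;=\; f_\lambda \sum_{\mathfrak{u}} P^\lambda_{\mathfrak{t},\mathfrak{u}}\, e^\lambda_{\mathfrak{s},\mathfrak{u}}.
\end{align*}
Consequently $t_B \cdot \widetilde{C}^\lambda_{\mathfrak{s},\mathfrak{t}} = \sum_{\mathfrak{s}'} \rho^\lambda_{\mathfrak{s}',\mathfrak{s}}(t_B)\, \widetilde{C}^\lambda_{\mathfrak{s}',\mathfrak{t}}$, an identity that has $\underline{\A}$-coefficients on both sides and hence is valid already in $\mathbf{J}_{\underline{\A}}$. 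Because the $\ast$-action of $\mathbf{J}$ on $\Sc_q$ corresponds to multiplication in $\mathbf{J}$ under the identification $\{A\}\leftrightarrow t_A$, applying Lemma \ref{b2} termwise (valid since $C^\lambda_{\mathfrak{s},\mathfrak{t}}$ is supported in $\mathfrak{a} = \mathbf{a}_\lambda$) gives
\begin{align*}
h \cdot C^\lambda_{\mathfrak{s},\mathfrak{t}} \;\equiv\; \Phi(h) \ast C^\lambda_{\mathfrak{s},\mathfrak{t}} \;=\; \sum_{\mathfrak{s}'} \rho^\lambda_{\mathfrak{s}',\mathfrak{s}}(\Phi(h))\, C^\lambda_{\mathfrak{s}',\mathfrak{t}} \quad \mathrm{mod~} \Sc_{q,\underline{\A}}^{\geq \mathbf{a}_\lambda + 1}
\end{align*}
for all $h \in \Sc_{q,\underline{\A}}$. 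Finally we identify $\Sc_{q,\underline{\A}}^{\geq \mathbf{a}_\lambda + 1} = \Sc_q(\prec \lambda)$: the containment $\supseteq$ is immediate from the support property of each $C^{\lambda'}_{\mathfrak{s}'',\mathfrak{t}''}$ with $\mathbf{a}_{\lambda'} > \mathbf{a}_\lambda$, while $\subseteq$ follows by inverting the transition matrix and noting that the coefficient of $C^{\lambda'}_{\mathfrak{s}'',\mathfrak{t}''}$ in any $\{A\}$ involves only $\rho^{\lambda'}(t_A)$, which forces $\mathbf{a}_{\lambda'} = \mathfrak{a}(A)$. Since $\Phi$ lands in $\mathbf{J}_{\underline{\A}}$ and $\rho^\lambda(\mathbf{J}) \subset \Mat_{d_\lambda}(\Z)$, the coefficients $r_h(\mathfrak{s}',\mathfrak{s}) := \rho^\lambda_{\mathfrak{s}',\mathfrak{s}}(\Phi(h)) \in \underline{\A}$ are manifestly independent of $\mathfrak{t}$, completing (C3).

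The main obstacle is obtaining the clean matrix-unit formula $\widetilde{C}^\lambda_{\mathfrak{s},\mathfrak{t}} = f_\lambda \sum_{\mathfrak{u}} P^\lambda_{\mathfrak{t},\mathfrak{u}}\, e^\lambda_{\mathfrak{s},\mathfrak{u}}$; it is precisely this decomposition which isolates the active first index $\mathfrak{s}$ from the passive second index $\mathfrak{t}$ (the latter being absorbed into $P^\lambda$), so that left multiplication in $\mathbf{J}_\Q$ trivially shifts only the former --- exactly the cellular pattern demanded by (C3). The remaining delicate points are the support analysis needed to invoke Lemma \ref{b2} without error, and the bookkeeping identification $\Sc_{q,\underline{\A}}^{\geq \mathbf{a}_\lambda + 1} = \Sc_q(\prec \lambda)$, both of which reduce to the same support constraint on $\rho^\lambda$.
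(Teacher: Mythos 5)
Your proposal is correct and follows essentially the same route as the paper: (C1) via the Schur relations and a dimension count, (C2) via Proposition \ref{c4}(a) and the symmetry of $P^\lambda$, and (C3) by reducing left multiplication to the $\ast$-action through Lemma \ref{b2} together with the observation that each $C^\lambda_{\mathfrak{s},\mathfrak{t}}$ is supported on $\{A : \mathfrak{a}(A)=\mathbf{a}_\lambda\}$. The only (cosmetic) difference is that you verify the key identity $t_A\ast C^\lambda_{\mathfrak{s},\mathfrak{t}}=\sum_{\mathfrak{s}'}\rho^\lambda_{\mathfrak{s}'\mathfrak{s}}(t_A)C^\lambda_{\mathfrak{s}',\mathfrak{t}}$ via the Wedderburn matrix units of $\mathbf{J}_\Q$ coming from the Schur relations, whereas the paper obtains it by a direct computation with Theorem \ref{thm:cell structures}(P7); both rest on the same symmetric-algebra structure of $\mathbf{J}$.
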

\begin{proof}
Assume $E_{\spadesuit}^{\lambda}\in \mathcal{E}_{\mathbf{c}}$ for some fixed $\mathbf{c}\in B_0$. We have $\rho^\lambda(t_{A^{t}})=0$ unless $A^{t}\in \mathbf{c}$ (i.e. $A\in \mathbf{c}$). In case $A\in \mathbf{c}$, we always have $\mathfrak{a}(A)=\mathbf{a}_{\lambda}$ by Proposition \ref{c1}. Thus Proposition \ref{c4} implies that $C_\mathfrak{s,t}^\lambda$ is a $\Z$-linear combination of the elements $\{A\}$ with $\mathfrak{a}(A)=\mathbf{a}_{\lambda}$.

\noindent	
{\em \underline{Step 1}}. We first prove that the elements $\{C_\mathfrak{s,t}^\lambda\}$ span $\Sc_{q,\underline{\A}}$ as an $\underline{\A}$-module. For any $B \in \Xi$, it is a straightforward calculation that
\begin{align*}
\{B\}&=\sum_{A\in\Xi} \delta_{A, B} \{A\}=\sum_{A\in \Xi} \sum_{\lambda \in \mathcal{E}} \sum_{\mathfrak{s} \in M(\lambda)} \frac{1}{f_\lambda}(\rho^\lambda(t_B) \cdot \rho^\lambda(t_{A^t}))_\mathfrak{s,s} \{A\}\qquad \mbox{by \eqref{c3}}\\
&=\sum_{\lambda \in \mathcal{E}} \sum_{\mathfrak{s,t} \in M(\lambda)} \frac{1}{f_\lambda}(\rho^\lambda(t_B) \cdot (P^\lambda)^{-1})_\mathfrak{s,t} C_\mathfrak{s,t}^\lambda \qquad\mbox{by \eqref{cst}}.
\end{align*}
Note that the coefficients $\frac{1}{f_\lambda}(\rho^\lambda(t_B) \cdot (P^\lambda)^{-1})_\mathfrak{s,t}\in R$ since $f_\lambda$ and $\det(P^\lambda)$ are invertible in $R$.
Thus $\{B\}$ is an $R$-linear combination of the elements $C_\mathfrak{s,t}^\lambda$ as required; moreover, if the coefficient of some $C_\mathfrak{s,t}^\lambda$ is nonzero, we must have $\mathbf{a}_{\lambda}=\mathfrak{a}(B)$.

Furthermore, by Lemma~\ref{b1} and Wedderburn's Theorem, we have $\mathrm{rank}_{\underline{\A}}(\Sc_{q,\underline{\A}})=\dim_{\K} \Sc_{q,\K}=\sum_\lambda d_\lambda^2$. Therefore, the elements $\{C_\mathfrak{s,t}^\lambda\:|\: \lambda \in \mathcal{E} ~\mbox{and}~ \mathfrak{s,t} \in M(\lambda)\}$ are linearly independent over $\underline{\A}$ and hence form an $\underline{\A}$-basis of $\Sc_{q,\underline{\A}}$. Thus $\mathrm{(C1)}$ holds.
	
\vspace{0.3cm}
\noindent
{\em \underline{Step 2}}. Note that $\Psi(\{A\})=\{A^t\}$ by \cite[Lemma~3.6]{CLW20}.
We check directly that
\begin{align*}
\Psi(C_\mathfrak{s,t}^\lambda)&=\sum_A (P^\lambda \cdot \rho^\lambda(t_{A^t}))_\mathfrak{t,s} \{A^t\}=\sum_A (\rho^\lambda(t_{A})^{\mathrm{tr}} \cdot P^\lambda)_\mathfrak{t,s} \{A^t\}\\&=\sum_A (P^\lambda \cdot \rho^\lambda(t_{A}))_\mathfrak{s,t} \{A^t\}=C_\mathfrak{t,s}^\lambda.
\end{align*}
Therefore, $\mathrm{(C2)}$ holds.
	
\vspace{0.3cm}
\noindent
{\em \underline{Step 3}}. As mentioned in \S\ref{sec:asymp}, we equip $\Sc_{q,\underline{\A}}$ with a $\mathbf{J}_{\underline{\A}}$-module structure via $$t_{A}\ast \{B\}=\sum\limits_{C\in \Xi}\gamma_{A,B}^{C^{t}}\{C\}.$$
For $A\in \Xi$, we claim that
\begin{align*}
t_A\ast C_\mathfrak{s,t}^\lambda=\sum_{\mathfrak{s'} \in M(\lambda)} \rho_\mathfrak{s's}^\lambda(t_A) C_\mathfrak{s',t}^\lambda.
\end{align*}
In fact,
\begin{align*}
t_A\ast C_\mathfrak{s,t}^\lambda=&\sum_{B \in \Xi} \sum_{\mathfrak{u}\in M(\lambda)} \beta_\mathfrak{tu}^\lambda \rho_\mathfrak{us}^\lambda(t_{B^t})t_A\ast \{B\}
=\sum_{B, C \in \Xi} \sum_{\mathfrak{u}\in M(\lambda)} \beta_\mathfrak{tu}^\lambda \rho_\mathfrak{us}^\lambda(t_{B^t})\gamma_{A,B}^{C^{t}}\{C\}\\
=&\sum_{C \in \Xi} \sum_{\mathfrak{u}\in M(\lambda)} \beta_\mathfrak{tu}^\lambda \rho_\mathfrak{us}^\lambda\Big(\sum_{B \in \Xi} \gamma_{C^{t}, A}^{B} t_{B^t}\Big)\{C\}\qquad\qquad \text{ by Theorem~\ref{thm:cell structures}(P7)}\\
=&\sum_{C \in \Xi} \sum_{\mathfrak{u}\in M(\lambda)} \beta_\mathfrak{tu}^\lambda \rho_\mathfrak{us}^\lambda(t_{C^{t}}t_A)\{C\}
=\sum_{C \in \Xi} (P^\lambda \cdot \rho^\lambda(t_{C^{t}}t_A))_\mathfrak{t,s}\{C\}\\
=&\sum_{C \in \Xi} \sum_{\mathfrak{s'} \in M(\lambda)} (P^\lambda \cdot \rho^\lambda(t_{C^{t}}))_{\mathfrak{t}, \mathfrak{s}'}  \rho_{\mathfrak{s}'\mathfrak{s}}^\lambda(t_A)\{C\}
=\sum_{\mathfrak{s'} \in M(\lambda)} \rho_\mathfrak{s's}^\lambda(t_A) C_\mathfrak{s',t}^\lambda,
\end{align*}
as claimed.

For any element $h\in \Sc_{q}$, we write $\Phi_{\underline{\A}}(h)=\sum_{A\in \Xi}a_{h}(A)t_{A}$, where $a_{h}(A)\in \underline{\A}$. Define
\begin{align*}
r_{h}(\mathfrak{s'}, \mathfrak{s})=\sum\limits_{A\in \Xi}a_{h}(A)\rho_\mathfrak{s's}^\lambda(t_A)\quad \text{for any $\mathfrak{s}', \mathfrak{s}\in M(\lambda)$}.
\end{align*}
Note that $r_{h}(\mathfrak{s'}, \mathfrak{s})\in \underline{\A}$ only depends on $\mathfrak{s}', \mathfrak{s}$ and $h$. Moreover, the above calculation shows that
\begin{align*}
\Phi_{\underline{\A}}(h)\ast C_\mathfrak{s,t}^\lambda=\sum_{\mathfrak{s'} \in M(\lambda)}r_{h}(\mathfrak{s'}, \mathfrak{s})C_\mathfrak{s',t}^\lambda.
\end{align*}

At the beginning of the proof we said that $C_\mathfrak{s,t}^\lambda$ is a $\Z$-linear combination of the elements $\{A\}$ with $\mathfrak{a}(A)=\mathbf{a}_{\lambda}$. Thus, by Lemma~\ref{b2} we immediately obtain
\begin{align*}
hC_{\mathfrak{s}, \mathfrak{t}}^{\lambda}\equiv \Phi_{\underline{\A}}(h)\ast C_\mathfrak{s,t}^\lambda=\sum\limits_{\mathfrak{s}'\in M(\lambda)} r_{h}(\mathfrak{s}', \mathfrak{s})C_{\mathfrak{s}', \mathfrak{t}}^{\lambda}\quad \mathrm{mod~}\Sc_{q,\underline{\A}}^{\geq \mathbf{a}_{\lambda}+1}.
\end{align*}
From {\em Step 1}, we see that each element $\{B\}$ is an $R$-linear combination of the elements $C_\mathfrak{s,t}^\lambda$ with $\mathbf{a}_{\lambda}=\mathfrak{a}(B)$. Therefore, each element in $\Sc_{q}^{\geq \mathbf{a}_{\lambda}+1}$ is an $\underline{\A}$-linear combination of the elements $C_\mathfrak{s',t'}^{\lambda'}$ with $\mathbf{a}_{\lambda'}\geq \mathbf{a}_{\lambda}+1$. Hence $\mathrm{(C3)}$ holds.
\end{proof}

\begin{cor}\label{f5}
Let $k$ be an integral domain with a ring homomorphism $\underline{\A} \rightarrow k$. Then the above ingredients define a cell structure on $\Sc_{q, k}$.
\end{cor}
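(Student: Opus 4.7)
The plan is to show that the cell datum $(\mathcal{E}, M, C, \Psi)$ of $\Sc_{q,\underline{\A}}$ constructed in Theorem~\ref{e5} base-changes tautologically to a cell datum for $\Sc_{q,k}$. Concretely, I would tensor everything with $k$ over $\underline{\A}$: set $C^{k}_{\mathfrak{s},\mathfrak{t}}{}^{\lambda} := C_{\mathfrak{s},\mathfrak{t}}^{\lambda} \otimes 1 \in \Sc_{q,k}$ and let $\Psi_{k} := \Psi \otimes \mathrm{id}_{k}$. The partially ordered set $(\mathcal{E}, \prec)$ and the finite sets $M(\lambda)$ are unchanged.

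First I would verify (C1). Since $\{C_{\mathfrak{s},\mathfrak{t}}^{\lambda}\}$ is an $\underline{\A}$-basis of $\Sc_{q,\underline{\A}}$ by Theorem~\ref{e5}, and $\Sc_{q,k} = k \otimes_{\underline{\A}} \Sc_{q,\underline{\A}}$ by definition of base change, the set $\{C^{k}_{\mathfrak{s},\mathfrak{t}}{}^{\lambda}\}$ is automatically a $k$-basis of $\Sc_{q,k}$, and the indexing map is injective. Next, (C2) is immediate: $\Psi_{k}$ is a $k$-linear anti-involution because $\Psi$ is an $\underline{\A}$-linear anti-involution, and $\Psi_{k}(C^{k}_{\mathfrak{s},\mathfrak{t}}{}^{\lambda}) = C^{k}_{\mathfrak{t},\mathfrak{s}}{}^{\lambda}$ follows from the corresponding identity over $\underline{\A}$ by applying $- \otimes_{\underline{\A}} k$.

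For (C3), the key observation is that the defining congruence
\[
h\, C_{\mathfrak{s},\mathfrak{t}}^{\lambda} \equiv \sum_{\mathfrak{s}' \in M(\lambda)} r_{h}(\mathfrak{s}', \mathfrak{s})\, C_{\mathfrak{s}', \mathfrak{t}}^{\lambda} \quad \mathrm{mod}~\Sc_{q,\underline{\A}}(\prec \lambda)
\]
holds in $\Sc_{q,\underline{\A}}$, where $\Sc_{q,\underline{\A}}(\prec\lambda)$ is the $\underline{\A}$-submodule spanned by the basis elements with smaller index. Tensoring with $k$ over $\underline{\A}$ sends $\Sc_{q,\underline{\A}}(\prec\lambda)$ to $\Sc_{q,k}(\prec\lambda)$ (the analogous $k$-submodule) and preserves the congruence, with coefficients $r_{h}(\mathfrak{s}', \mathfrak{s})$ mapped into $k$. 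Any $h \in \Sc_{q,k}$ is a $k$-linear combination of elements of the form $h_0 \otimes 1$ with $h_0 \in \Sc_{q,\underline{\A}}$, so the property extends by $k$-linearity. The independence of the coefficients from $\mathfrak{t}$ is also preserved.

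I do not anticipate any substantive obstacle: the whole statement is a formal consequence of the fact that the Graham--Lehrer axioms are module-theoretic and behave well under base change. The only thing one should be slightly careful about is checking that the submodule $\Sc_{q,k}(\prec\lambda)$ arising from tensoring equals the one spanned by the images $C^{k}_{\mathfrak{s}',\mathfrak{t}'}{}^{\lambda'}$ with $\lambda' \prec \lambda$; this is clear because the basis over $\underline{\A}$ descends to a basis over $k$, so spanning sets defined by fixed subsets of basis indices match up.
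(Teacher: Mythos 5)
Your proposal is correct and matches the paper's (implicit) argument: the paper states this corollary without proof precisely because cellularity is preserved under base change in exactly the formal way you describe — the cellular basis, the anti-involution, and the structure constants $r_h(\mathfrak{s}',\mathfrak{s})$ all transport along $\underline{\A}\rightarrow k$. No gaps.
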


\subsection{Application: Specht modules}
Analogous to the discussions in \cite[Example 4.4]{Ge07}, we can follow a general approach proposed in \cite{GL96} to develop a theory of Specht modules for $\Sc_{q, k}$ over a field $k$, assuming that there exists a ring homomorphism from $\underline{\A}$ to $k$. More concretely, we can use the cell datum to define a cell representation $W_{k}(\lambda)$ of $\Sc_{q, k}$ for each $\lambda\in \mathcal{E}$, and moreover, on each $W_{k}(\lambda)$ we can define a symmetric bilinear form $g^{\lambda}_{k}$.

Denote $\mathcal{E}^{\circ} :=\{\lambda\in \mathcal{E}\:|\:g^{\lambda}_{k}\neq 0\}$ and let $L_{k}(\lambda) :=W_{k}(\lambda)/\mathrm{rad}(g^{\lambda}_{k})$ for each $\lambda\in \mathcal{E}^{\circ}$. Thanks to \cite[Theorem 3.4]{GL96}, the set $\{L_{k}(\lambda)\:|\:\lambda\in \mathcal{E}^{\circ}\}$ is a complete set of absolutely irreducible representations of $\Sc_{q, k}$ (up to isomorphism). Moreover, by \cite[Theorem 3.8 \& Remark 3.10]{GL96}, we see that
\begin{itemize}
  \item[(1)] $\Sc_{q, k}$ is quasi-hereditary if $\mathcal{E}=\mathcal{E}^{\circ}$;
  \item[(2)] $\Sc_{q, k}$ is semisimple if and only if $\mathcal{E}=\mathcal{E}^{\circ}$ and the bilinear form $g^{\lambda}_{k}$ is nondegenerate for each $\lambda\in \mathcal{E}$.
\end{itemize}

Denote by $[W_{k}(\lambda) : L_{k}(\lambda')]$ the multiplicity of $L_{k}(\lambda')$ as a composition factor of $W_{k}(\lambda)$. Then, by \cite[Proposition 3.6]{GL96} we have
\begin{align*}
[W_{k}(\lambda) : L_{k}(\lambda)]=1&\quad\mbox{ for any }\lambda \in \mathcal{E}^{\circ},\quad\mbox{and}\\
[W_{k}(\lambda) : L_{k}(\lambda')]=0&\quad\mbox{ unless }\lambda \preceq \lambda' \text{ (i.e., $\lambda=\lambda'$ or $\mathbf{a}_{\lambda}> \mathbf{a}_{\lambda'}$).}
\end{align*}
So the decomposition matrix $D=([W_{k}(\lambda) : L_{k}(\lambda')])_{\lambda\in \mathcal{E}, \lambda' \in \mathcal{E}^{\circ}}$ is lower uni-triangular if the rows and columns are ordered according to increasing $\mathbf{a}$-values.

\section{Representations and beyond}
\label{sec:blocks}

In this section we shall define and study special modules for the asymptotic Schur algebras. Moreover, we shall study the blocks of $\Sc_q(X_\ff)$ and left cell representations of the Schur algebras.

\subsection{The structure of $\mathbf{J}_{\mathbf{c}}$}
Fix a two-sided cell $\mathbf{c}\in B_0$. Let
$\mathfrak{c}$
be the corresponding two-sided cell containing $\{w_C^+ ~|~ C \in \mathbf{c}\}$ in the Weyl group $W$ by Proposition \ref{prop:d=d2}(3).
Denote
\[
\mathfrak{L}:= \{\text{left cells contained in } \mathfrak{c}\},\quad \mathbb{L}:= \{\text{left cells contained in } \mathbf{c}\}..
\]
We have \[\mathfrak{c}=\bigsqcup_{\mathfrak{\Gamma} \in \mathfrak{L}}\mathfrak{\Gamma}=\bigsqcup_{\mathfrak{\Gamma}, \mathfrak{\Gamma'}\in \mathfrak{L}} (\mathfrak{\Gamma}\cap \mathfrak{\Gamma'}^{-1}), \quad \mathbf{c}=\bigsqcup_{\Gamma \in \mathbb L}\Gamma=\bigsqcup_{\Gamma, \Gamma'\in \mathbb L}(\Gamma\cap \Gamma'^{t}),\]
where $\mathfrak{\Gamma'}^{-1}=\{w^{-1} ~|~ w \in \mathfrak{\Gamma'}\}$ and $\Gamma'^{t}=\{C^{t}~|~C\in\Gamma'\}$. Also we can associate a unique $\mathfrak{\Gamma} \in \mathfrak{L}$ with each $\Gamma \in \mathbb{L}$, such that $\{w_C^+ ~|~ C \in \Gamma\} \subset \mathfrak{\Gamma}$ by Proposition \ref{prop:d=d2}(2).

Set $A_\mathfrak{\Gamma}:=\{\Gamma' \in \mathbb{L}~|~ \mathfrak{\Gamma'}=\mathfrak{\Gamma}\}$.
For any $w\in W$, let $\mathcal{R}(w)=\{s\in S\:|\:ws<w\}$. 
It is known that $\mathcal{R}(w)=\mathcal{R}(w')$ if $w, w'\in W$ are in the same left cell (see \cite[Lemma 8.6(a)]{Lu03}). So for a left cell $\mathfrak{\Gamma}\in\mathfrak L$, we shall write $\mathcal{R}(\mathfrak{\Gamma})$ instead of $\mathcal{R}(w)$ if $w\in\mathfrak{\Gamma}$.
There is a one-to-one correspondence between the set $A_{\mathfrak{\Gamma}}$  and the set of those $\gamma \in \Lambda_{\ff}$ such that all simple reflections of $W_{\gamma}$ lie in $\mathcal{R}(\mathfrak{\Gamma})$. We shall denote the cardinality of $A_{\mathfrak{\Gamma}}$ by $n_{\mathfrak{\Gamma}}$.

Let $\tilde{\mathbf{J}}$ denote the asymptotic Hecke algebra associated to $\HH$ (cf. \cite{Lu87a}), which admits a $\Z$-basis $\{t_{w}\:|\:w\in W\}$ with multiplication
\[
t_{x}t_{y}=\sum\limits_{z\in W}\gamma_{x, y}^{z^{-1}}t_{z}.
\]

Recall the notation $\mathcal{M}_{\ff}$ introduced in \S\ref{sec:abiform}.
Denote $\mathbf{J}_{\mathfrak{c}}'=\tilde{\mathbf{J}}_{\mathfrak{c}} \otimes_\Z \mathcal{M}_{\ff}$. Let $t_\ff=\sum_{D=(\gamma, d, \gamma) \in \mathcal{D}_{\mathbf{c}}} t_{w_D^+}\otimes e_{\gamma\gamma}$, which is clearly an idempotent element in $\mathbf{J}_{\mathbf{c}}'$. Let $\psi_{\mathbf{c}}: \mathbf{J}_{\mathbf{c}} \rightarrow \mathbf{J}_{\mathfrak{c}}'$ be the $\Z$-map via $\psi_{\mathbf{c}}(t_C)=t_{w_C^+} \otimes e_{\gamma\nu}$ for $C=(\gamma, g, \nu)$.

\begin{lem}\label{mapj}
	\begin{itemize}
		\item[(1)] The map $\psi_{\mathbf{c}}$ preserves the multiplication;
		\item[(2)] As algebras, $\mathbf{J}_{\mathbf{c}} \cong \psi_{\mathbf{c}}(\mathbf{J}_{\mathbf{c}})=t_\ff \mathbf{J}_{\mathfrak{c}}'t_\ff$;
		\item[(3)] It holds that $\mathbf{J}_{\Gamma \cap \Gamma'^t} \cong \psi_{\mathbf{c}}(\mathbf{J}_{\Gamma \cap \Gamma'^t})=\mathbf{J}_{\mathfrak{\Gamma} \cap \mathfrak{\Gamma'}^{-1}} \otimes e_{\gamma'\gamma}$, where $\gamma$ and $\gamma'$ are the common columns of the elements in $\Gamma$ and $\Gamma'$, respectively.
	\end{itemize}
\end{lem}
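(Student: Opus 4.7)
The plan is to exploit the defining formula $\psi_{\mathbf{c}}(t_C)=t_{w_C^+}\otimes e_{\gamma\nu}$ together with the map $C\mapsto(w_C^+,\ro(C),\co(C))$, which by Proposition~\ref{prop:d=d2}(2) is injective and places $\mathbf{c}$ inside $\mathfrak{c}\times\Lambda_\ff\times\Lambda_\ff$. For part (1), I would expand both sides of the putative identity $\psi_{\mathbf{c}}(t_At_B)=\psi_{\mathbf{c}}(t_A)\psi_{\mathbf{c}}(t_B)$: the left-hand side is $\sum_C\gamma_{A,B}^{C^t}\,t_{w_C^+}\otimes e_{\ro(C),\co(C)}$, and for $A=(\gamma,g,\nu)$, $B=(\gamma',g',\nu')$ the right-hand side is $\delta_{\nu,\gamma'}\sum_{z\in\mathfrak{c}}\gamma_{w_A^+,w_B^+}^{z^{-1}}\,t_z\otimes e_{\gamma,\nu'}$. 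Theorem~\ref{thm:cell structures}(P8) pins down $\ro(C)=\gamma$, $\co(C)=\nu'$ and $\nu=\gamma'$ whenever $\gamma_{A,B}^{C^t}\neq 0$, so the $\mathcal{M}_\ff$-tensor factors align on the nose. What remains is the pointwise scalar identity $\gamma_{A,B}^{C^t}=\gamma_{w_A^+,w_B^+}^{(w_C^+)^{-1}}$ for $C$ satisfying the row/column constraints, together with the vanishing of $\gamma_{w_A^+,w_B^+}^{z^{-1}}$ for $z\in\mathfrak{c}$ not of this form. I would extract this by applying the isomorphism $\varphi$ of Theorem~\ref{blf} to the expansion $\{A\}\{B\}=\sum_Cg_{A,B}^C\{C\}$, substituting $\varphi(\{C\})=\frac{q^{-\ell(w_\circ^{\ro(C)})}}{\mathfrak{P}_{W_{\ro(C)}}}\mathcal{C}_{w_C^+}\otimes e_{\ro(C),\co(C)}$, matching coefficients of each $\mathcal{C}_z\otimes e_{\gamma,\nu'}$, and then extracting the leading ($q$-lowest) terms; the exponents align via Proposition~\ref{prop:d=d2}(4).

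With (1) in hand, (2) splits into two halves. Injectivity of $\psi_{\mathbf{c}}$ is immediate since $C=(\gamma,g,\nu)$ is recovered from $(w_C^+,\gamma,\nu)$ via the minimal representative of the double coset $W_\gamma w_C^+W_\nu=W_\gamma gW_\nu$. The inclusion $\psi_{\mathbf{c}}(\mathbf{J}_{\mathbf{c}})\subseteq t_\ff\mathbf{J}_{\mathfrak{c}}'t_\ff$ is automatic from (1) applied to the identity $t_{\mathbf{c}}=\sum_{D\in\mathcal{D}_{\mathbf{c}}}t_D$ of $\mathbf{J}_{\mathbf{c}}$, whose image under $\psi_{\mathbf{c}}$ is $t_\ff$. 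For the reverse inclusion, writing $t_\ff=\sum_{\alpha\in\Lambda_\ff}p_\alpha\otimes e_{\alpha\alpha}$ with $p_\alpha=\sum_{D\in\mathcal{D}_{\mathbf{c}},\,\gamma_D=\alpha}t_{w_D^+}$, a direct calculation shows $t_\ff\mathbf{J}_{\mathfrak{c}}'t_\ff=\bigoplus_{\alpha,\beta}(p_\alpha\tilde{\mathbf{J}}_{\mathfrak{c}}p_\beta)\otimes e_{\alpha\beta}$, and I would then argue that each corner $p_\alpha\tilde{\mathbf{J}}_{\mathfrak{c}}p_\beta$ has $\Z$-basis $\{t_{w_C^+}:C\in\mathbf{c},\,\ro(C)=\alpha,\,\co(C)=\beta\}$, matching the image. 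Part (3) follows by restricting this identification to the single pair $(\alpha,\beta)=(\gamma',\gamma)$: for $C\in\Gamma\cap\Gamma'^t$ one reads off $\co(C)=\gamma$ and $\ro(C)=\gamma'$, while Proposition~\ref{prop:d=d2}(2) combined with $(w_C^+)^{-1}=w_{C^t}^+$ places $w_C^+\in\mathfrak{\Gamma}\cap\mathfrak{\Gamma'}^{-1}$; conversely, any such $w$ determines $C=(\gamma',g,\gamma)$ uniquely via its minimal double coset representative.

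The main obstacles are twofold. The first is the scalar identity at the heart of (1): although the equality $\gamma_{A,B}^{C^t}=\gamma_{w_A^+,w_B^+}^{(w_C^+)^{-1}}$ is consistent with the philosophy that $\{C\}$ is a decorated version of $\mathcal{C}_{w_C^+}$ and is presumably implicit in \cite{CLW20}, a clean derivation through $\varphi$ demands careful bookkeeping with the Poincar\'e factors $\mathfrak{P}_{W_\gamma}^{-1}$ and with the Kazhdan--Lusztig expansion of $\mathcal{C}_{w_A^+}\mathcal{C}_{w_B^+}$. The second is the basis description of the corner $p_\alpha\tilde{\mathbf{J}}_{\mathfrak{c}}p_\beta$ in (2), which relies on the idempotency of each $p_\alpha$ (and hence of $t_\ff$) and on the fine cell-theoretic structure of $\tilde{\mathbf{J}}_{\mathfrak{c}}$ --- both ultimately grounded in Proposition~\ref{prop:d=d2} and Theorem~\ref{thm:cell structures}.
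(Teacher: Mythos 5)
Your proposal is correct, and parts (1) and (3) follow the paper's own route exactly: the paper likewise proves (1) by pushing $\{A\}\{B\}=\sum_C g_{A,B}^C\{C\}$ through $\varphi$, reading off $\mathcal{C}_{w_A^+}\mathcal{C}_{w_B^+}=q^{\ell(w_\circ^\mu)}\mathfrak{P}_{W_\mu}\sum_C g_{A,B}^C\,\mathcal{C}_{w_C^+}$, and extracting leading coefficients via $\mathfrak{a}(C)=\mathbf{a}(w_C^+)$. The only genuine divergence is in the reverse inclusion of (2). The paper stays in the Hecke algebra: it uses that $\sum_{z\in\D_\gamma^+}\A\,\mathcal{C}_z$ is stable under right multiplication, so $\mathcal{C}_{w_{D'}^+}\mathcal{C}_y\mathcal{C}_{w_D^+}\in\sum_{C=(\gamma',g,\gamma)}\A\,\mathcal{C}_{w_C^+}$, and then passes to $\tilde{\mathbf{J}}$. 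You instead work entirely inside $\tilde{\mathbf{J}}_{\mathfrak{c}}$, computing the corners $p_\alpha\tilde{\mathbf{J}}_{\mathfrak{c}}p_\beta$ from the idempotent relations $t_yt_{w_D^+}$, $t_{w_{D'}^+}t_y$. That works, but the one step you should make explicit is why the surviving basis elements $t_y$ (those with $y$ in the left cell of $w_D^+$ and the right cell of $w_{D'}^+$) are exactly the $t_{w_C^+}$ with $\ro(C)=\alpha$, $\co(C)=\beta$: this follows from the constancy of descent sets on cells, giving $\mathcal{R}(y)=\mathcal{R}(w_D^+)\supseteq J_\beta$ and $\mathcal{L}(y)=\mathcal{L}(w_{D'}^+)\supseteq J_\alpha$, hence $y\in\D_{\alpha\beta}^+$, i.e.\ $y=w_C^+$ for a unique $C=(\alpha,g,\beta)$, which lies in $\mathbf{c}$ by Proposition~\ref{prop:d=d2}(3). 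Your version has the mild advantage of never leaving the asymptotic algebra; the paper's version gets the same descent-set control for free from the right-ideal property of the Kazhdan--Lusztig basis.
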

\begin{proof}
	$(1)$ Recall the homomorphism $\varphi$ from \S\ref{sec:abiform}. For any $A=(\gamma, g, \mu), B=(\mu, g', \nu) \in \mathbf{c}$, we have
	\begin{align*}
		&(\sum_{z \in W} h_{w_A^+,w_B^+}^z \mathcal{C}_z \otimes e_{\gamma\nu})=(\mathcal{C}_{w_A^+} \otimes e_{\gamma\mu})(\mathcal{C}_{w_B^+} \otimes e_{\mu\nu})\\
		&=q^{\ell(w_\circ^\gamma)+\ell(w_\circ^\mu)}\mathfrak{P}_{W_\gamma}\mathfrak{P}_{W_\mu}\varphi(\{A\}\{B\})=q^{\ell(w_\circ^\gamma)+\ell(w_\circ^\mu)}\mathfrak{P}_{W_\gamma}\mathfrak{P}_{W_\mu}\sum_{C \in \Xi} g_{A,B}^C \varphi(\{C\})\\
		&=q^{\ell(w_\circ^\mu)}\mathfrak{P}_{W_\mu}\sum_{C \in \Xi} g_{A,B}^C \mathcal{C}_{w_C^+} \otimes e_{\gamma\nu}.
	\end{align*}
	Therefore, comparing the coefficients, we get $\mathcal{C}_{w_A^+}\mathcal{C}_{w_B^+}=q^{\ell(w_\circ^\mu)}\mathfrak{P}_{W_\mu}\sum_{C \in \Xi} g_{A,B}^C \mathcal{C}_{w_C^+}$. Then by the definition of $\gamma_{A,B}^C$ and Proposition \ref{prop:d=d2}, we have
	$$t_{w_A^+} t_{w_B^+}=\sum_{C \in \Xi, w_C^+ \in \mathfrak{c}} \gamma_{A,B}^{C^t} t_{w_C^+}=\sum_{C \in \mathbf{c}} \gamma_{A,B}^{C^t} t_{w_C^+}.$$
	
	$(2)$ Since $\psi_{\mathbf{c}}(\id_{\mathbf{J}_{\mathbf{c}}})=t_\ff$, $\psi_{\mathbf{c}}(\mathbf{J}_{\mathbf{c}}) \subset t_\ff \mathbf{J}_{\mathfrak{c}}'t_\ff$ and $\psi_{\mathbf{c}}(\mathbf{J}_{\mathbf{c}})=\sum_{C \in \mathbf{c}} \Z (t_{w_C^+} \otimes e_{\gamma\nu})$. Noting that for any $x \in \D_\gamma^+, y \in W$, we have $\mathcal{C}_x\mathcal{C}_y \in \sum_{z \in \D_\gamma^+} \A \mathcal{C}_z$. Then for any $A=(\gamma, g, \mu), B=(\mu', g', \nu) \in \mathbf{c}$, $$\mathcal{C}_{w_A^+}\mathcal{C}_y\mathcal{C}_{w_B^+} \in \sum_{z \in \D_{\gamma\nu}^+} \A \mathcal{C}_z=\sum_{C=(\gamma,g,\nu)} \A \mathcal{C}_{w_C^+}.$$
	Therefore, $t_{w_A^+} t_y t_{w_B^+} \in \sum_{C=(\gamma,g,\nu) \in \mathbf{c}} \Z t_{w_C^+}$. Thus for $D=(\gamma, d, \gamma), D'=(\gamma', d', \gamma') \in \mathcal{D}_\mathbf{c}$, we have $$ (t_{w_{D'}^+}\otimes e_{\gamma'\gamma'}) \mathbf{J}_{\mathfrak{c}}' (t_{w_D^+}\otimes e_{\gamma\gamma}) \subset \sum_{C=(\gamma, g, \gamma') \in \mathbf{c}} \Z (t_{w_C^+} \otimes e_{\gamma'\gamma})=\psi_{\mathbf{c}}(\mathbf{J}_{\mathbf{c}}),$$
	which implies $\psi_{\mathbf{c}}(\mathbf{J}_{\mathbf{c}})=t_\ff \mathbf{J}_{\mathfrak{c}}'t_\ff$.
	
	$(3)$ follows from $(2)$ and the fact that $\mathbf{J}_{\Gamma \cap \Gamma'^t}=t_{D'} \mathbf{J} t_D$, where $D$ and $D'$ are the unique distinguished elements in $\Gamma$ and $\Gamma'$, respectively.
\end{proof}


We assume that $G_{\mathfrak{c}}$ is the finite group associated to $\mathfrak{c}$ in \cite{Lu84}. For each $\mathfrak{\Gamma}\in \mathfrak{L}$, let $H_{\mathfrak{\Gamma}}$ be the associated subgroup of $G_{\mathfrak{c}}$ defined in \cite[\S3.8]{Lu87c}. Let $T$ be the finite $G_{\mathfrak{c}}$-set $\bigoplus_{\mathfrak{\Gamma} \in \mathfrak{L}}G_{\mathfrak{c}}/H_{\mathfrak{\Gamma}}$.
By \cite[Conj. \S3.15]{Lu87c} and \cite[Theorem~4]{BFO09}, the following isomorphisms of rings hold:
\begin{align} \label{eq:BFO}
	\tilde{\mathbf{J}}_{\mathfrak{\Gamma}\cap \mathfrak{\Gamma}^{-1}}\stackrel{\sim}{\longrightarrow} K_{G_{\mathfrak{c}}}(G_{\mathfrak{c}}/H_{\mathfrak{\Gamma}}\times G_{\mathfrak{c}}/H_{\mathfrak{\Gamma}}),
	\qquad
	\tilde{\mathbf{J}}_{\mathfrak{c}}\stackrel{\sim}{\longrightarrow} K_{G_{\mathfrak{c}}}(T\times T).
\end{align}

We set $\bar{T}$ to be the finite $G_{\mathfrak{c}}$-set
\[
\bar{T} = \bigoplus_{\mathfrak{\Gamma} \in \mathfrak{L}}(G_{\mathfrak{c}}/H_{\mathfrak{\Gamma}})^{\oplus{n_{\mathfrak{\Gamma}}}}.
\]
Then by Lemma~\ref{mapj}, we obtain the following counterparts of \eqref{eq:BFO}.
\begin{prop}\label{f4}
	The following isomorphisms of rings hold:
	\[
	\mathbf{J}_{\Gamma \cap \Gamma^t}\stackrel{\sim}{\longrightarrow} K_{G_{\mathfrak{c}}}(G_{\mathfrak{c}}/H_{\mathfrak{\Gamma}}\times G_{\mathfrak{c}}/H_{\mathfrak{\Gamma}}),
	\qquad
	\mathbf{J}_{\mathbf{c}}
	\stackrel{\sim}{\longrightarrow} K_{G_{\mathfrak{c}}}(\bar{T}\times \bar{T}).
	\]
\end{prop}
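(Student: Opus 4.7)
The plan is to deduce both isomorphisms in Proposition \ref{f4} from Lemma \ref{mapj} together with the Hecke-algebra identifications \eqref{eq:BFO}. The first isomorphism is nearly immediate: specializing Lemma \ref{mapj}(3) to $\Gamma' = \Gamma$ yields the ring isomorphism $\mathbf{J}_{\Gamma \cap \Gamma^t} \cong \tilde{\mathbf{J}}_{\mathfrak{\Gamma} \cap \mathfrak{\Gamma}^{-1}} \otimes e_{\gamma\gamma}$. Since $e_{\gamma\gamma}^2 = e_{\gamma\gamma}$ and $\Z e_{\gamma\gamma} \cong \Z$ as rings, the right-hand side is canonically isomorphic to $\tilde{\mathbf{J}}_{\mathfrak{\Gamma} \cap \mathfrak{\Gamma}^{-1}}$ itself, and applying the first half of \eqref{eq:BFO} then gives the desired identification with $K_{G_\mathfrak{c}}(G_\mathfrak{c}/H_\mathfrak{\Gamma} \times G_\mathfrak{c}/H_\mathfrak{\Gamma})$.

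For the second isomorphism, I would start from Lemma \ref{mapj}(2), which gives $\mathbf{J}_\mathbf{c} \cong t_\ff(\tilde{\mathbf{J}}_\mathfrak{c} \otimes \mathcal{M}_\ff) t_\ff$, and then convert the ambient algebra to equivariant K-theory. The matrix algebra $\mathcal{M}_\ff$ is naturally the convolution algebra $K(\Lambda_\ff \times \Lambda_\ff)$ of the trivial $G_\mathfrak{c}$-set $\Lambda_\ff$. A K\"unneth-type argument (routine since $G_\mathfrak{c}$ acts trivially on $\Lambda_\ff$) combined with \eqref{eq:BFO} then yields
\[
\tilde{\mathbf{J}}_\mathfrak{c} \otimes \mathcal{M}_\ff \;\stackrel{\sim}{\longrightarrow}\; K_{G_\mathfrak{c}}(T \times T) \otimes_\Z K(\Lambda_\ff \times \Lambda_\ff) \;\cong\; K_{G_\mathfrak{c}}\bigl((T \times \Lambda_\ff) \times (T \times \Lambda_\ff)\bigr),
\]
with the convolution on the right taken along the middle copy of $T \times \Lambda_\ff$; compatibility of multiplications is a direct consequence of the factorized convolution formula $(f_T \otimes f_\Lambda) \ast (g_T \otimes g_\Lambda) = (f_T \ast g_T) \otimes (f_\Lambda \ast g_\Lambda)$.

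Finally, I would decode the cut by $t_\ff = \sum_{D = (\gamma, d, \gamma) \in \mathcal{D}_\mathbf{c}} t_{w_D^+} \otimes e_{\gamma\gamma}$. Under \eqref{eq:BFO} each $t_{w_D^+}$ becomes the identity idempotent of the summand $K_{G_\mathfrak{c}}(G_\mathfrak{c}/H_\mathfrak{\Gamma} \times G_\mathfrak{c}/H_\mathfrak{\Gamma})$ attached to the left cell $\mathfrak{\Gamma} \ni w_D^+$, while $e_{\gamma\gamma}$ projects onto the point $\gamma$ of $\Lambda_\ff$. By the correspondence recorded before the proposition, the $\gamma$'s occurring as columns of distinguished elements in left cells sitting above $\mathfrak{\Gamma}$ are exactly the elements of $A_\mathfrak{\Gamma}$, of cardinality $n_\mathfrak{\Gamma}$. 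Hence $t_\ff$ is the diagonal idempotent of the $G_\mathfrak{c}$-subset
\[
\bar T \;=\; \bigsqcup_{\mathfrak{\Gamma} \in \mathfrak{L}} \bigl(A_\mathfrak{\Gamma} \times G_\mathfrak{c}/H_\mathfrak{\Gamma}\bigr) \;\cong\; \bigsqcup_{\mathfrak{\Gamma} \in \mathfrak{L}} (G_\mathfrak{c}/H_\mathfrak{\Gamma})^{\oplus n_\mathfrak{\Gamma}}
\]
inside $T \times \Lambda_\ff$, and cutting down yields $\mathbf{J}_\mathbf{c} \cong K_{G_\mathfrak{c}}(\bar T \times \bar T)$. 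The only step requiring genuine care is the identification of $t_{w_D^+}$ with the identity of the relevant K-theory summand; this is a standard feature of the BFO correspondence but must be invoked explicitly to make the idempotent $t_\ff$ transparent on the K-theoretic side.
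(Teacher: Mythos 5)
Your proposal is correct and follows essentially the same route as the paper, which deduces Proposition \ref{f4} directly from Lemma \ref{mapj} combined with the isomorphisms \eqref{eq:BFO}; you have simply made explicit the bookkeeping (the identification $\mathcal{M}_\ff \cong K(\Lambda_\ff\times\Lambda_\ff)$, the factorized convolution, and the recognition of $t_\ff$ as the diagonal idempotent of $\bar T$) that the paper leaves implicit. The one point you rightly flag --- that $t_{w_D^+}$ is the identity of the summand attached to $\mathfrak{\Gamma}$ --- rests on the fact that $w_D^+$ is the distinguished involution of its left cell in $W$, which is exactly what the correspondence between $\mathcal{D}$ and distinguished involutions from \cite{CLW20} provides.
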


\subsection{Special modules}
Set $\mathbf{J}_{\mathbf{c}}^+:=\sum_{C \in \mathbf{c}} \mathbb{R}^+ t_C$. A one-dimensional subspace $\mathcal{L}$ in $\mathbf{J}_{\mathbf{c},\C}$ is called a {\em positive line} if $\mathcal{L}^+ :=\mathcal{L} \cap \mathbf{J}_{\mathbf{c}}^+ \neq \emptyset$.
\begin{thm}\label{positive-lines}
	\begin{enumerate}
		\item
		For $\Gamma \in \mathbb L$, there is a unique left ideal $M_{\Gamma}$ of $\mathbf{J}_{\mathbf{c},\C}$ such that the following property $(\heartsuit)$ holds:
		\\
		$(\heartsuit)$ $M_{\Gamma}=\bigoplus_{\Gamma'\in \mathbb L}M_{\Gamma, \Gamma'}$, where $M_{\Gamma, \Gamma'} :=M_{\Gamma}\cap \mathbf{J}_{\Gamma\cap \Gamma'^{t},\C}$ is a positive line for $\forall \Gamma'\in \mathbb L$.
		\item
		Let $\Gamma, \Gamma'\in \mathbb L$ and $A\in \mathbf{c}$. Assume $A\in \tilde{\Gamma}\cap \tilde{\Gamma}'^{t}$ for well-defined $\tilde{\Gamma}, \tilde{\Gamma}'\in \mathbb L$. If $\tilde{\Gamma}\neq \Gamma'$, then $t_{A}M_{\Gamma, \Gamma'}=0$. If $\tilde{\Gamma}=\Gamma'$, then $t_{A}M_{\Gamma, \Gamma'}=M_{\Gamma, \tilde{\Gamma}'}$ and $t_{A}M_{\Gamma, \Gamma'}^{+}=M_{\Gamma, \tilde{\Gamma}'}^{+}$.
		
		\item
		The $\mathbf{J}_{\mathbf{c},\C}$-module $M_{\Gamma}$ is simple. Moreover, $M_{\Gamma} \cong M_{\Gamma'}$ for any $\Gamma, \Gamma'\in\mathbb L$.
		
		\item
		The subspace $\mathbf{I}:=\oplus_{\Gamma\in \mathbb L}M_{\Gamma}\subset\mathbf{J}_{\mathbf{c},\C}$ is a simple two-sided ideal of $\mathbf{J}_{\mathbf{c},\C}$.
		
		\item
		Take $\Gamma, \Gamma', \tilde{\Gamma}, \tilde{\Gamma}'\in\mathbb L$. If $\Gamma\neq \tilde{\Gamma}'$, then $M_{\Gamma, \Gamma'}M_{\tilde{\Gamma}, \tilde{\Gamma}'}=0$. If $\Gamma=\tilde{\Gamma}'$, then the multiplication in $\mathbf{J}_{\mathbf{c},\C}$ defines an isomorphism $M_{\Gamma, \Gamma'}\otimes M_{\tilde{\Gamma}, \Gamma}\stackrel{\sim}{\longrightarrow} M_{\tilde{\Gamma}, \Gamma'}$ and a surjective map $M_{\Gamma, \Gamma'}^{+}\times M_{\tilde{\Gamma}, \Gamma}^{+}\twoheadrightarrow M_{\tilde{\Gamma}, \Gamma'}^{+}$.
		
		\item
		The anti-involution $\Psi_{\mathbf{c}} :\mathbf{J}_{\mathbf{c},\C} \rightarrow \mathbf{J}_{\mathbf{c},\C}$ given by $t_{C}\mapsto t_{C^{t}}\ (\forall C\in \mathbf{c})$ maps $M_{\Gamma, \Gamma'}$ (resp. $M_{\Gamma, \Gamma'}^{+}$) onto $M_{\Gamma', \Gamma}$ (resp. $M_{\Gamma', \Gamma}^{+}$) for $\Gamma, \Gamma'\in\mathbb L$.
	\end{enumerate}
\end{thm}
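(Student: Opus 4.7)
My plan is to transfer Lusztig's analogous positivity theorem for the asymptotic Hecke algebra $\tilde{\mathbf{J}}_\mathfrak{c}$ from \cite{Lu18} to the asymptotic Schur algebra $\mathbf{J}_\mathbf{c}$ via the algebra embedding $\psi_\mathbf{c}: \mathbf{J}_\mathbf{c} \hookrightarrow \mathbf{J}'_\mathfrak{c} = \tilde{\mathbf{J}}_\mathfrak{c} \otimes_\Z \mathcal{M}_\ff$ of Lemma \ref{mapj}, which on each cell component restricts to the identification $\mathbf{J}_{\Gamma \cap \Gamma'^t} \cong \tilde{\mathbf{J}}_{\mathfrak{\Gamma} \cap \mathfrak{\Gamma}'^{-1}} \otimes e_{\gamma'\gamma}$. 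The crucial feature is that $\psi_\mathbf{c}(t_C) = t_{w_C^+} \otimes e_{\ro(C)\co(C)}$, so $\psi_\mathbf{c}$ preserves the positive cone and the cell filtration, and a positive line in a summand $\mathbf{J}_{\Gamma \cap \Gamma'^t, \C}$ corresponds, under $\psi_\mathbf{c}$, to a positive line in $\tilde{\mathbf{J}}_{\mathfrak{\Gamma} \cap \mathfrak{\Gamma}'^{-1}, \C}$ tensored with the fixed matrix unit $e_{\gamma'\gamma}$.

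Recall that for each $\mathfrak{\Gamma} \in \mathfrak{L}$, Lusztig produces a unique left ideal $\tilde{M}_\mathfrak{\Gamma} = \bigoplus_{\mathfrak{\Gamma}'} \tilde{M}_{\mathfrak{\Gamma},\mathfrak{\Gamma}'}$ of $\tilde{\mathbf{J}}_{\mathfrak{c},\C}$ with each $\tilde{M}_{\mathfrak{\Gamma},\mathfrak{\Gamma}'} \subset \tilde{\mathbf{J}}_{\mathfrak{\Gamma} \cap \mathfrak{\Gamma}'^{-1},\C}$ a positive line, and establishes the Hecke-algebra analogs of (2)--(6); in particular the $\tilde{M}_\mathfrak{\Gamma}$ are mutually isomorphic simples and $\bigoplus_\mathfrak{\Gamma} \tilde{M}_\mathfrak{\Gamma}$ is a simple two-sided ideal. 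For each $\Gamma \in \mathbb{L}$, with its unique associated left cell $\mathfrak{\Gamma} \in \mathfrak{L}$ and common column $\gamma$, I would set $M_{\Gamma,\Gamma'} := \psi_\mathbf{c}^{-1}(\tilde{M}_{\mathfrak{\Gamma},\mathfrak{\Gamma}'} \otimes e_{\gamma'\gamma})$ and $M_\Gamma := \bigoplus_{\Gamma'} M_{\Gamma,\Gamma'}$. Property $(\heartsuit)$ is then built in, and uniqueness in (1) follows because any other candidate, extended by $\otimes\, \mathcal{M}_{\ff,\C}$ and cut by the idempotent $t_\ff$, would produce a left ideal of $\tilde{\mathbf{J}}_{\mathfrak{c},\C}$ satisfying Lusztig's characterizing property.

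Claims (2)--(6) then follow by direct translation through $\psi_\mathbf{c}$. Claim (2) uses the multiplication rule of Lemma \ref{mapj}(1), namely $(t_{w_A^+} \otimes e_{\ro(A)\co(A)})(t_{w_B^+} \otimes e_{\ro(B)\co(B)}) = 0$ unless $\co(A) = \ro(B)$, together with the Hecke analog for the $t_{w_A^+}$-action on $\tilde{M}_{\mathfrak{\Gamma},\mathfrak{\Gamma}'}$. Claims (3) and (4) follow from the matrix-algebra description of Proposition \ref{f4}: the image of $\mathbf{I}$ under $\mathbf{J}_{\mathbf{c},\C} \cong K_{G_\mathfrak{c}}(\bar T \times \bar T)_\C$ is the simple matrix block attached to the trivial representation of $G_\mathfrak{c}$, within which the $M_\Gamma$ appear as column subspaces, giving both simplicity of $\mathbf{I}$ and pairwise isomorphism of the $M_\Gamma$. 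Claims (5) and (6) follow from the multiplication in $\psi_\mathbf{c}(\mathbf{J}_\mathbf{c})$, Lusztig's analogs for $\tilde{\mathbf{J}}_\mathfrak{c}$, and the identity $w_{C^t}^+ = (w_C^+)^{-1}$ which intertwines $\Psi_\mathbf{c}$ with Lusztig's anti-involution tensored with the swap $e_{\gamma\nu} \mapsto e_{\nu\gamma}$.

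The main obstacle will be formulating Lusztig's theorem from \cite{Lu18} in the precise form needed here: his setup deals only with $\tilde{\mathbf{J}}_\mathfrak{c}$ without the extra ``column'' data carried by elements of $\mathbf{c}$, so matching his positive lines with the appropriate summands $\mathbf{J}_{\Gamma \cap \Gamma'^t,\C}$ requires unpacking the correspondence $\Gamma \leftrightarrow (\mathfrak{\Gamma}, \gamma)$ with $\gamma \in A_\mathfrak{\Gamma}$. A secondary subtlety is the surjectivity onto positive cones in (2) and (5), which requires the nonvanishing of certain structure constants $\gamma_{A,B}^{C^t}$ with matching row/column labels; this reduces to Lusztig's nonvanishing statements together with Theorem \ref{thm:cell structures}(P7, P8) and the positivity of the canonical basis in \eqref{eq:can}.
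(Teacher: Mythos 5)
Your construction is the same as the paper's: both realize $M_{\Gamma,\Gamma'}$ as $M_{\mathfrak{\Gamma},\mathfrak{\Gamma}'}\otimes e_{\gamma'\gamma}$ inside $\mathbf{J}_{\mathfrak{c},\C}'$ via the embedding $\psi_{\mathbf{c}}$ of Lemma \ref{mapj}, and then pull Lusztig's Theorem 1.2(b)--(f) of \cite{Lu18} back componentwise. Parts (2)--(6) then go through exactly as you indicate (the paper writes out only (2) and leaves the rest as routine translations; your detour through the $K$-theoretic description of Proposition \ref{f4} for (3)--(4) is not needed, since Lusztig already proves simplicity of $\tilde M_{\mathfrak{\Gamma}}$ and of $\oplus_{\mathfrak{\Gamma}}\tilde M_{\mathfrak{\Gamma}}$ directly, but it is harmless).

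The one step where your mechanism does not work as stated is the uniqueness in (1). You propose to take a second candidate satisfying $(\heartsuit)$, ``extend by $\otimes\,\mathcal{M}_{\ff,\C}$ and cut by $t_\ff$,'' and then invoke Lusztig's uniqueness for $\tilde{\mathbf{J}}_{\mathfrak{c},\C}$. But the map $\mathbb L\to\mathfrak{L}$, $\Gamma\mapsto\mathfrak{\Gamma}$, need not be surjective: $n_{\mathfrak{\Gamma}}=\sharp A_{\mathfrak{\Gamma}}$ is zero whenever no $\gamma\in\Lambda_{\ff}$ has all simple reflections of $W_\gamma$ inside $\mathcal{R}(\mathfrak{\Gamma})$, so an object assembled from the components indexed by $\mathbb L$ has no summand in $\tilde{\mathbf{J}}_{\mathfrak{\Gamma}\cap\mathfrak{\Gamma}'^{-1},\C}$ for the missing $\mathfrak{\Gamma}'$ and cannot satisfy Lusztig's version of $(\heartsuit)$ over all of $\mathfrak{L}$; moreover the proposed extension is not canonical. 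The paper avoids this by making the uniqueness local: applying Perron's theorem to left multiplication by $\sum_{z\in\mathfrak{\Gamma}\cap\mathfrak{\Gamma}'^{-1}}t_z$, it identifies $M_{\mathfrak{\Gamma},\mathfrak{\Gamma}'}$ as the \emph{unique} positive line in $\tilde{\mathbf{J}}_{\mathfrak{\Gamma}\cap\mathfrak{\Gamma}'^{-1},\C}$ stable under the relevant diagonal subalgebra. Since any left ideal satisfying $(\heartsuit)$ forces each summand $M_{\Gamma,\Gamma'}$ to be such a stable positive line inside $\mathbf{J}_{\Gamma\cap\Gamma'^t,\C}\cong\tilde{\mathbf{J}}_{\mathfrak{\Gamma}\cap\mathfrak{\Gamma}'^{-1},\C}\otimes e_{\gamma'\gamma}$, uniqueness follows component by component. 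You should replace your extension argument with this intrinsic characterization (or an equivalent one).
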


The simple $\mathbf{J}_{\mathbf{c},\C}$-modules $M_{\Gamma}$ defined in Theorem~ \ref{positive-lines} are called the {\em special $\mathbf{J}_{\mathbf{c},\C}$-modules}.

\begin{proof}
	By \cite[\S1.2]{Lu18}, we have a positive line $M_{\mathfrak{\Gamma},\mathfrak{\Gamma'}}$ in $\tilde{\mathbf{J}}_{\mathfrak{\Gamma} \cap \mathfrak{\Gamma'}^{-1},\C}$ stable under the action of $\tilde{\mathbf{J}}_{\mathfrak{\Gamma'}\cap\mathfrak{\Gamma'}^{-1},\C}$. Considering the action of $\sum_{z \in \mathfrak{\Gamma} \cap \mathfrak{\Gamma'}^{-1}} t_z$ on $\tilde{\mathbf{J}}_{\mathfrak{\Gamma} \cap \mathfrak{\Gamma'}^{-1},\C}$ and applying Perron's theorem \cite{Pe07}, we can see that $M_{\mathfrak{\Gamma},\mathfrak{\Gamma'}}$ is the unique one in $\tilde{\mathbf{J}}_{\mathfrak{\Gamma} \cap \mathfrak{\Gamma'}^{-1},\C}$ stable under the action of
	$\sum_{z \in \mathfrak{\Gamma} \cap \mathfrak{\Gamma'}^{-1}} t_z$.
	
	For $\Gamma, \Gamma' \in \mathbb{L}$, let $\gamma$ and $\gamma'$ be the common columns of the elements in $\Gamma$ and $\Gamma'$, respectively. Set $M_{\Gamma,\Gamma'}=M_{\mathfrak{\Gamma},\mathfrak{\Gamma}'} \otimes e_{\gamma'\gamma} \subset \mathbf{J}_{\mathfrak{c},\C}'$.
	Using $\psi_{\mathbf{c}}$, we regard $\mathbf{J}_{\mathbf{c}, \C}$ as a subspace of $\mathbf{J}_{\mathfrak{c}, \C}'$, then $M_{\Gamma,\Gamma'}$ is the unique positive line in $\mathbf{J}_{\Gamma \cap \Gamma'^t,\C}$ stable under the action of $\mathbf{J}_{\Gamma' \cap \Gamma'^t,\C}$. Set $M_\Gamma=\bigoplus_{\Gamma' \in \mathbb{L}} M_{\Gamma,\Gamma'}$. It can be checked that $M_\Gamma$ is a left ideal of $\mathbf{J}_{\mathbf{c}, \C}$ and satisfies the propterty $(\heartsuit)$. The uniqueness of $M_\Gamma$ follows from the uniqueness of $M_{\Gamma,\Gamma'}$.
	
	Using \cite[Theorem~1.2 $(b)$-$(f)$]{Lu18}, we can prove the properties $(2)$-$(6)$ easily so we will only prove $(2)$ and leave the others to the readers. Recall that $\gamma$, $\gamma'$, $\tilde{\gamma}$ and $\tilde{\gamma}'$ are the common columns of the elements in $\Gamma$, $\Gamma'$, $\tilde{\Gamma}$ and $\tilde{\Gamma}'$, respectively. Note that $\tilde{\Gamma}=\Gamma'$ iff $\tilde{\gamma}=\gamma'$ and $\tilde{\mathfrak{\Gamma}}=\mathfrak{\Gamma'}$. So $t_A M_{\Gamma,\Gamma'}=(t_{w_A^+} \otimes e_{\tilde{\gamma}'\tilde{\gamma}}) (M_{\mathfrak{\Gamma},\mathfrak{\Gamma'}} \otimes e_{\gamma'\gamma})$. Then by \cite[Theorem~1.2 $(b)$]{Lu18},
	\begin{align*}
		t_A M_{\Gamma,\Gamma'}=(t_{w_A^+} \otimes e_{\tilde{\gamma}'\tilde{\gamma}}) (M_{\mathfrak{\Gamma},\mathfrak{\Gamma'}} \otimes e_{\gamma'\gamma})&=\delta_{\tilde{\gamma}\gamma'} \delta_{\tilde{\mathfrak{\Gamma}}\mathfrak{\Gamma'}}M_{\mathfrak{\Gamma},\tilde{\mathfrak{\Gamma'}}} \otimes e_{\tilde{\gamma}'\gamma}=\delta_{\tilde{\Gamma}\Gamma'} M_{\Gamma,\Gamma'}\\
		t_A M_{\Gamma,\Gamma'}^+=(t_{w_A^+} \otimes e_{\tilde{\gamma}'\tilde{\gamma}}) (M_{\mathfrak{\Gamma},\mathfrak{\Gamma'}}^+ \otimes e_{\gamma'\gamma})&=\delta_{\tilde{\gamma}\gamma'} \delta_{\tilde{\mathfrak{\Gamma}}\mathfrak{\Gamma'}} M_{\mathfrak{\Gamma},\tilde{\mathfrak{\Gamma'}}}^+ \otimes e_{\tilde{\gamma}'\gamma}=\delta_{\tilde{\Gamma}\Gamma'} M_{\Gamma,\Gamma'}^+.
	\end{align*}
\end{proof}

In \cite[\S1.7]{Lu18}, we can see that $M_{\mathfrak{\Gamma},\mathfrak{\Gamma'}}$ can be defined over $\Q$. That is, $M_{\mathfrak{\Gamma},\mathfrak{\Gamma'}} \cap \tilde{\mathbf{J}}_{\mathfrak{c}, \Q}$ spans $M_{\mathfrak{\Gamma},\mathfrak{\Gamma'}}$. Therefore, $M_{\Gamma, \Gamma'}$ can also be defined over $\Q$ by the setting of $M_{\Gamma, \Gamma'}$.

\subsection{Blocks of $\Sc_{q}$}
In this subsection, we shall follow \cite{Ro99} to give a characterization of the blocks in the $q$-Schur algebra $\Sc_{q}$. Let $\mathfrak{O}=\{f/g\in \K\:|\:f \in \A, g \in 1+q\Z[q]\}$.

\begin{lem}\label{g2}
Let $\mathbf{c}\in B_{0}$ and assume that $D\in \mathcal{D}_{\mathbf{c}}$. Then $t_{D}$ is a primitive idempotent in $\mathbf{J}_{\mathfrak{O}}$. As a result, $\{t_{\mathbf{c}}\:|\:\mathbf{c}\in B_{0}\}$ is the complete set of primitive central idempotents of $\mathbf{J}_{\mathfrak{O}}$.
\end{lem}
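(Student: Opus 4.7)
Plan. The strategy is to reduce both claims to the analogous statement for the asymptotic Hecke algebra $\tilde{\mathbf{J}}$ established by Rouquier \cite{Ro99}, via the $\mathfrak{O}$-algebra isomorphism $\psi_\mathbf{c}: \mathbf{J}_{\mathbf{c},\mathfrak{O}} \xrightarrow{\sim} t_\ff \mathbf{J}'_{\mathfrak{c},\mathfrak{O}} t_\ff$ obtained by base-changing Lemma~\ref{mapj} to $\mathfrak{O}$, where $\mathbf{J}'_{\mathfrak{c},\mathfrak{O}} = \tilde{\mathbf{J}}_{\mathfrak{c},\mathfrak{O}} \otimes_{\mathfrak{O}} \mathcal{M}_{\ff,\mathfrak{O}}$. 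Under $\psi_\mathbf{c}$ the idempotent $t_D$ for $D = (\gamma, d, \gamma) \in \mathcal{D}_\mathbf{c}$ corresponds to $t_{w_D^+} \otimes e_{\gamma\gamma}$, and combining (P6) with Proposition~\ref{prop:d=d2}, $w_D^+$ is a distinguished involution in the two-sided cell $\mathfrak{c} \subset W$ associated to $\mathbf{c}$.

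For the primitivity of $t_D$, I will compute the corner algebra directly. Using $e_{\gamma\gamma} \mathcal{M}_{\ff,\mathfrak{O}} e_{\gamma\gamma} = \mathfrak{O} e_{\gamma\gamma}$, one readily obtains
\[
t_D \mathbf{J}_{\mathbf{c},\mathfrak{O}} t_D \;\cong\; (t_{w_D^+} \otimes e_{\gamma\gamma})\,\mathbf{J}'_{\mathfrak{c},\mathfrak{O}}\,(t_{w_D^+} \otimes e_{\gamma\gamma}) \;\cong\; t_{w_D^+} \tilde{\mathbf{J}}_{\mathfrak{c},\mathfrak{O}} t_{w_D^+}.
\]
The key input is that the Rouquier-type ring $\mathfrak{O}$ is designed so that $t_{w_D^+} \tilde{\mathbf{J}}_{\mathfrak{O}} t_{w_D^+}$ has no non-trivial idempotents (the asymptotic-Hecke-algebra content of Rouquier's theorem). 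Hence the right-hand side has no non-trivial idempotents, so neither does $t_D \mathbf{J}_{\mathbf{c},\mathfrak{O}} t_D$, and $t_D$ is primitive in $\mathbf{J}_{\mathfrak{O}}$.

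For the second claim, the $t_\mathbf{c}$ are pairwise orthogonal central idempotents summing to $1$ thanks to $\mathbf{J} = \bigoplus_{\mathbf{c}} \mathbf{J}_\mathbf{c}$, so it remains to show each $\mathbf{J}_{\mathbf{c},\mathfrak{O}}$ admits no non-trivial central idempotent. Suppose $z \in Z(\mathbf{J}_{\mathbf{c},\mathfrak{O}})$ is a non-trivial idempotent. By part~(1), the $\{t_D\}_{D \in \mathcal{D}_\mathbf{c}}$ form a complete orthogonal set of primitive idempotents of $\mathbf{J}_{\mathbf{c},\mathfrak{O}}$, so $z$ would partition $\mathcal{D}_\mathbf{c}$ into two non-empty pieces, forcing $t_D \mathbf{J}_{\mathbf{c},\mathfrak{O}} t_{D'} = 0$ whenever $D, D'$ are separated by $z$. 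The contradiction will come from exhibiting, for arbitrary $D, D' \in \mathcal{D}_\mathbf{c}$, some $C \in \mathbf{c}$ with $C \sim_R D$ and $C \sim_L D'$, since then $t_D t_C t_{D'} = t_C \neq 0$.

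The hard part will be this last existence: via Proposition~\ref{prop:d=d2} it amounts to a non-emptiness statement for the intersection of a right cell with a left cell inside $\mathfrak{c} \subset W$, together with a realizability condition $w = w_C^+$ for some $C = (\ro(D), g, \co(D'))$, which is not a formal consequence of the Coxeter-theoretic picture alone. I would handle it via the K-theoretic description $\mathbf{J}_\mathbf{c} \cong K_{G_\mathbf{c}}(\bar T \times \bar T)$ from Proposition~\ref{f4}: the non-vanishing of every component $\mathbf{J}_{\Gamma \cap \Gamma'^t}$ for $\Gamma, \Gamma' \in \mathbb{L}$, visible from that equivariant-K-theoretic picture, translates into the required existence and completes the argument.
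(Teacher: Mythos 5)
Your argument is correct in outline but takes a genuinely different route from the paper's. The paper proves primitivity of $t_D$ by a direct trace argument: using Lusztig's decomposition $\tau_\Q=\sum_{\lambda}c_\lambda\,\Tr(\cdot,E_{\spadesuit}^{\lambda})$ with $c_\lambda>0$, any idempotent $e$ with $e=t_De=et_D$ satisfies $0\leq\Tr(e,E_{\spadesuit,\K}^{\lambda})\leq\Tr(t_D,E_{\spadesuit,\K}^{\lambda})$ for all $\lambda$, so $\tau_{\mathfrak{O}}(e)\in\mathfrak{O}\cap\Q=\Z$ is forced to equal $0$ or $\tau(t_D)=1$, whence $e=0$ or $e=t_D$. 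Your corner-algebra reduction $t_D\mathbf{J}_{\mathbf{c},\mathfrak{O}}t_D\cong t_{w_D^+}\tilde{\mathbf{J}}_{\mathfrak{O}}t_{w_D^+}$ via $\psi_{\mathbf{c}}$ is sound (the summands of $t_\ff$ are orthogonal and $e_{\gamma\gamma}\mathcal{M}_{\ff}e_{\gamma\gamma}=\Z e_{\gamma\gamma}$), but it outsources precisely the content the paper proves; if you go that way you must check that the Rouquier--Geck primitivity statement is available over this specific ring $\mathfrak{O}=\{f/g\,:\,f\in\A,\ g\in1+q\Z[q]\}$ --- the mechanism behind it in \cite{Ro99, Ge05} is the very trace argument above, so running it directly is both safer and no longer. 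For the second claim your treatment is in fact more explicit than the paper's terse ``as a result'': you correctly isolate the needed input that $\Gamma\cap\Gamma'^{t}\neq\emptyset$ for any two left cells $\Gamma,\Gamma'$ in $\mathbf{c}$, so that $t_D\mathbf{J}_{\mathbf{c},\mathfrak{O}}t_{D'}\neq0$ links all the $t_D$'s. You do not, however, need the $K$-theoretic description of Proposition \ref{f4} for this: Theorem \ref{positive-lines}(1), already established at this point, produces a positive line $M_{\Gamma,\Gamma'}\subset\mathbf{J}_{\Gamma\cap\Gamma'^{t},\C}$ for every pair $\Gamma,\Gamma'\in\mathbb{L}$, which gives the non-emptiness immediately and avoids having to track how the isomorphism of Proposition \ref{f4} matches the $(\Gamma,\Gamma')$-components with the summands $K_{G_{\mathfrak{c}}}(G_{\mathfrak{c}}/H_{\mathfrak{\Gamma}}\times G_{\mathfrak{c}}/H_{\mathfrak{\Gamma}'})$.
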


\begin{proof}
	Recall from \S\ref{sec:asymp} that we have the $\Z$-map $\tau$ on $\mathbf{J}$ via $\tau(t_{C})=1$ if $C\in \mathcal{D}$ and $\tau(t_{C})=0$ otherwise which can induce a non-degenerate symmetric associative bilinear form. From \cite[\S1.3 (d)]{Lu87c}, we have a decomposition $\tau_\Q=\sum_{\lambda \in \mathcal{E}} c_\lambda \mathrm{Tr}(\cdot, E_{\spadesuit}^\lambda)$ where $c_\lambda$ is a positive rational number. Let $e$ be an idempotent of $\mathbf{J}_\mathfrak{O}$, such that $e t_D= t_D e=e$ for some $D \in \mathcal{D}$, then $\mathrm{Tr}(e, E_{\spadesuit, \K}^\lambda)$ is an positive integer not more than $\mathrm{Tr}(t_D, E_{\spadesuit, \K}^\lambda)=1$, therefore, $\tau_\mathfrak{O}(e) \in \mathfrak{O} \cap \Q=\Z$ is an positive integer not more than $\tau(t_D)=1$. So we can deduce that either $\tau_\mathfrak{O}(e)=0$ and $\mathrm{Tr}(e, E_{\spadesuit, \K}^\lambda)=0$ for all $\lambda$, or $\tau_\mathfrak{O}(e)=1$ and $\mathrm{Tr}(e, E_{\spadesuit, \K}^\lambda)=\mathrm{Tr}(t_D, E_{\spadesuit, \K}^\lambda)$ for all $\lambda$. Thus $e=0$ or $e=t_D$.
\end{proof}

By Theorem~\ref{thm:cell structures}, we have
\begin{align*}
	q^{\mathfrak{a}(C)-\ell(w_\circ^{\co(C)})}\Phi (\{C\}) \in t_{C}+\sum_{A \in \Xi, \mathfrak{a}(A)=\mathfrak{a}(C)} q\Z[q] t_{A}+\sum_{B \in \Xi, \mathfrak{a}(B)> \mathfrak{a}(C)} \A t_{B} .
\end{align*}
Therefore, there exists some $m \in \Z$ such that $q^{m}\mathrm{det}(\Phi)\in 1+q\Z[q]$, and hence $\mathrm{det}(\Phi)$ is invertible in $\mathfrak{O}$. So $\Phi_{\mathfrak{O}}$ is an isomorphism.

From Lemma \ref{g2}, we immediately obtain the following result (cf. \cite[Theorem 1]{Ro99} or \cite[Theorem 4.3]{Ge05}).

\begin{thm}\label{g4}
The following two conditions are equivalent:
\begin{itemize}
  \item[(a)] $\lambda, \lambda'\in\mathcal{E}$ belong to the same two-sided cell. That is, there exists some $\mathbf{c}\in B_{0}$ such that $t_{\mathbf{c}}E_{\spadesuit}^{\lambda}\neq 0$ and $t_{\mathbf{c}}E_{\spadesuit}^{\lambda'}\neq 0$;
  \item[(b)] $\lambda, \lambda'\in \mathcal{E}$ belong to the same block of $\Sc_{q, \mathfrak{O}}$. That is, there exists a primitive central idempotent $e$ of $\Sc_{q, \mathfrak{O}}$ such that $eE_{q}^{\lambda}\neq 0$ and $eE_{q}^{\lambda'}\neq 0$.
\end{itemize}
\end{thm}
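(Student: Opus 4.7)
The plan is to deduce Theorem~\ref{g4} directly by transporting the description of primitive central idempotents in $\mathbf{J}_{\mathfrak{O}}$ (Lemma~\ref{g2}) across the isomorphism $\Phi_{\mathfrak{O}}: \Sc_{q,\mathfrak{O}} \rightarrow \mathbf{J}_{\mathfrak{O}}$ that was just established via the invertibility of $\det(\Phi)$ in $\mathfrak{O}$. Since $\Phi_{\mathfrak{O}}$ is an isomorphism of $\mathfrak{O}$-algebras, it matches primitive central idempotents on the two sides. Combined with Lemma~\ref{g2}, this yields that $\{\Phi_{\mathfrak{O}}^{-1}(t_{\mathbf{c}}) \mid \mathbf{c} \in B_{0}\}$ is the complete set of primitive central idempotents of $\Sc_{q,\mathfrak{O}}$. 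In particular, condition (b) is equivalent to the existence of some $\mathbf{c} \in B_{0}$ such that $\Phi_{\mathfrak{O}}^{-1}(t_{\mathbf{c}}) \cdot E_{q}^{\lambda} \neq 0$ and $\Phi_{\mathfrak{O}}^{-1}(t_{\mathbf{c}}) \cdot E_{q}^{\lambda'} \neq 0$.

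The second step is to translate each such nonvanishing back to the asymptotic picture. Recall from \S\ref{sec:phipro} that $E_{q}^{\lambda} = \Phi_{\K}^{*}(E_{\spadesuit,\K}^{\lambda})$, which means that for any $x \in \mathbf{J}_{\mathfrak{O}} \subset \mathbf{J}_{\K}$ the action of $\Phi_{\mathfrak{O}}^{-1}(x)$ on $E_{q}^{\lambda}$ coincides, tautologically, with the action of $x$ on $E_{\spadesuit,\K}^{\lambda}$. Applying this with $x = t_{\mathbf{c}}$ and using $E_{\spadesuit,\K}^{\lambda} = \K \otimes_{\Q} E_{\spadesuit}^{\lambda}$, we obtain
\begin{equation*}
\Phi_{\mathfrak{O}}^{-1}(t_{\mathbf{c}}) \cdot E_{q}^{\lambda} \neq 0 \ \Longleftrightarrow\ t_{\mathbf{c}} \cdot E_{\spadesuit,\K}^{\lambda} \neq 0 \ \Longleftrightarrow\ t_{\mathbf{c}} \cdot E_{\spadesuit}^{\lambda} \neq 0,
\end{equation*}
and likewise for $\lambda'$. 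Combining with the previous paragraph, conditions (a) and (b) become identical.

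The only fact that requires any care is the justification that $\Phi_{\mathfrak{O}}$ is indeed an algebra isomorphism (rather than merely a linear one) — but this is immediate because $\Phi$ is an algebra homomorphism by \S\ref{sec:phipro} and base change preserves this structure, while bijectivity follows from the invertibility of $\det(\Phi)$ in $\mathfrak{O}$ already noted before the statement of Theorem~\ref{g4}. The compatibility of the rephrasing of condition (a) used here with the original formulation of two-sided cells is ensured by Lemma~\ref{lem:part}. No serious obstacle arises: the proof is a direct transport-of-structure argument, with all the substantive work having been done in Lemma~\ref{g2} and in establishing the isomorphism $\Phi_{\mathfrak{O}}$.
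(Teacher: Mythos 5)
Your proposal is correct and follows exactly the route the paper intends: the paper derives Theorem~\ref{g4} ``immediately'' from Lemma~\ref{g2} together with the observation, made just before the theorem, that $\det(\Phi)$ is invertible in $\mathfrak{O}$ so that $\Phi_{\mathfrak{O}}$ is an isomorphism, and your write-up simply makes explicit the transport of primitive central idempotents and the tautological identification of the $t_{\mathbf{c}}$-action on $E_{\spadesuit,\K}^{\lambda}$ with the $\Phi_{\mathfrak{O}}^{-1}(t_{\mathbf{c}})$-action on $E_{q}^{\lambda}=\Phi_{\K}^{*}(E_{\spadesuit,\K}^{\lambda})$. No gaps.
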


\subsection{Left cell representations}
In this subsection, we will follow \cite[\S21]{Lu03} to study the left cell representations of $\mathcal{S}_{\Q}$. Let $\Gamma$ be a left cell in $\Xi$, then we have $\mathbf{J}_{\Gamma}=\mathbf{J}t_D$ where $D$ is the unique distinguished element in $\Gamma$.

We denote by $[\Gamma]$ the $\Sc_{\Q}$-module induced by $\Phi_{\Q}$ from $\mathbf{J}_{\Gamma,\Q}$. We call $[\Gamma]$ a {\em left cell representation} of $\mathcal{S}_{\Q}$. We have $[E^{\lambda} : [\Gamma]]=[E_{\spadesuit}^{\lambda} : \mathbf{J}_{\Gamma,\Q}]$ for any simple $\mathcal{S}_{\Q}$-module $E^{\lambda}$, where $[E^{\lambda} : [\Gamma]]$ (resp. $[E_{\spadesuit}^{\lambda} : \mathbf{J}_{\Gamma,\Q}]$) denotes the multiplicity of $E^{\lambda}$ (resp. $E_{\spadesuit}^{\lambda}$) as a simple component of $[\Gamma]$ (resp. $\mathbf{J}_{\Gamma,\Q}$).

Following from basic representation theory of algebras, we have the following results.
\begin{lem}\label{g6}
The $\Q$-linear map $$u :\mathrm{Hom}_{\mathbf{J}_{\Q}}(\mathbf{J}_{\Gamma,\Q}, E_{\spadesuit}^{\lambda})\rightarrow t_{D}E_{\spadesuit}^{\lambda},\qquad\xi\mapsto \xi(t_{D})$$ is an isomorphism, and $\Tr(t_{D}, E_{\spadesuit}^{\lambda})=[E^{\lambda} : [\Gamma]]$.
\end{lem}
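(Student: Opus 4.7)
The plan is to recognize $u$ as an instance of the classical adjunction $\mathrm{Hom}_R(Re, M) \cong eM$ for an idempotent $e$ in a ring $R$ and any left $R$-module $M$, then to combine this with split semisimplicity to read off the trace. The key structural input is that $\mathbf{J}_{\Gamma,\Q} = \mathbf{J}_\Q t_D$ where $t_D$ is a genuine idempotent: the latter is part of the setup in \S\ref{sec:asymp}, and the former follows from the multiplication rule $t_C t_D = t_C$ if $C\sim_L D$ and $0$ otherwise together with the fact (Theorem~\ref{thm:cell structures}(P13)) that $\Gamma$ is precisely the left cell containing the distinguished element $D$.

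First I would verify that $u$ is well-defined and bijective. For any $\mathbf{J}_\Q$-linear $\xi: \mathbf{J}_\Q t_D \to E_{\spadesuit}^{\lambda}$, one has $\xi(t_D) = \xi(t_D^2) = t_D\,\xi(t_D) \in t_D E_{\spadesuit}^{\lambda}$, so $u$ lands in the right space. Injectivity is immediate from $\xi(x t_D) = x\,\xi(t_D)$. Surjectivity is given by the explicit inverse $v \mapsto (x t_D \mapsto xv)$ for $v \in t_D E_{\spadesuit}^{\lambda}$, which is well-defined because $xt_D = x't_D$ implies $(x-x')v = (x-x') t_D v = 0$.

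For the trace formula, note that in characteristic zero the trace of an idempotent endomorphism equals the dimension of its image, so $\mathrm{Tr}(t_D, E_{\spadesuit}^{\lambda}) = \dim_\Q t_D E_{\spadesuit}^{\lambda}$. By the isomorphism $u$ this is $\dim_\Q \mathrm{Hom}_{\mathbf{J}_\Q}(\mathbf{J}_{\Gamma,\Q}, E_{\spadesuit}^{\lambda})$. Now Corollary~\ref{b1} ensures $\mathbf{J}_\Q$ is split semisimple, so Schur's lemma gives $\mathrm{End}_{\mathbf{J}_\Q}(E_{\spadesuit}^{\mu}) = \Q$ for each simple $E_{\spadesuit}^\mu$, and hence $\dim_\Q \mathrm{Hom}_{\mathbf{J}_\Q}(M, E_{\spadesuit}^{\lambda})$ equals the multiplicity $[M : E_{\spadesuit}^{\lambda}]$ for any finite-dimensional module $M$; applying this to $M = \mathbf{J}_{\Gamma,\Q}$ identifies the right-hand side with $[E_{\spadesuit}^{\lambda} : \mathbf{J}_{\Gamma,\Q}]$. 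Finally, since $\Phi_\Q$ is an algebra isomorphism (Corollary~\ref{b1}), $\Phi_\Q^*$ is an equivalence of module categories preserving multiplicities, so $[E_{\spadesuit}^{\lambda} : \mathbf{J}_{\Gamma,\Q}] = [E^{\lambda} : [\Gamma]]$, completing the proof.

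This argument is essentially formal, so there is no real obstacle; the one point that deserves care is the identification $\mathbf{J}_\Gamma = \mathbf{J} t_D$ and the idempotency $t_D^2 = t_D$, both of which rely on the cell-structure properties (P5)--(P7) and (P13) established in Theorem~\ref{thm:cell structures}. Everything else is standard ring-module theory.
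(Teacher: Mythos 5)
Your proof is correct and is exactly the standard argument the paper has in mind: the paper gives no proof of Lemma \ref{g6}, stating only that it "follows from basic representation theory of algebras," and your chain of reasoning (the adjunction $\mathrm{Hom}_{R}(Re,M)\cong eM$ for the idempotent $t_D$ with $\mathbf{J}_{\Gamma,\Q}=\mathbf{J}_\Q t_D$, trace of an idempotent equals rank in characteristic zero, split semisimplicity of $\mathbf{J}_\Q$ from Corollary \ref{b1} to convert $\dim\mathrm{Hom}$ into a multiplicity, and the equivalence $\Phi_\Q^*$ to pass to $[E^\lambda:[\Gamma]]$) is precisely the intended filling-in. No gaps.
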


\begin{lem}\label{h4}
Let $\Gamma, \Gamma'$ be two left cells in $\Xi$.
\begin{itemize}
\item[(1)] We have $\mathrm{Hom}_{\mathbf{J}_{\Q}}(\mathbf{J}_{\Gamma,\Q}, \mathbf{J}_{\Gamma',\Q})=0$ unless $\Gamma, \Gamma'$ are contained in the same two-sided cell.
\item[(2)] In general, we have $\mathrm{dim} \mathrm{Hom}_{\mathbf{J}_{\Q}}(\mathbf{J}_{\Gamma,\Q}, \mathbf{J}_{\Gamma',\Q})=\sharp(\Gamma\cap \Gamma'^{t})$; in particular, $\Gamma\cap \Gamma'^{t}=\emptyset$ unless $\Gamma, \Gamma'$ are contained in the same two-sided cell.
\item[(3)] The subspace $\mathbf{J}_{\Gamma\cap \Gamma^{t},\Q}$ of $\mathbf{J}_{\Q}$ is a $\Q$-algebra with a unit element $t_{D}$. Moreover, it is isomorphic to $\mathrm{End}_{\mathbf{J}_{\Q}}(\mathbf{J}_{\Gamma,\Q})$, and therefore, it is split semisimple.
\end{itemize}
\end{lem}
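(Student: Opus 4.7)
The plan is to reduce all three statements to a direct computation of the ``corner'' $t_D \mathbf{J}_\Q t_{D'}$, where $D \in \Gamma$ and $D' \in \Gamma'$ are the unique distinguished elements of their respective left cells (Theorem~\ref{thm:cell structures}(P13)). Since $\mathbf{J}_{\Gamma,\Q} = \mathbf{J}_\Q t_D$ is generated by an idempotent, the standard Hom--corner identification (the same one underlying Lemma~\ref{g6}) gives a $\Q$-linear isomorphism
\begin{equation*}
  \Hom_{\mathbf{J}_\Q}(\mathbf{J}_{\Gamma,\Q}, \mathbf{J}_{\Gamma',\Q}) \;\xrightarrow{\sim}\; t_D \mathbf{J}_\Q t_{D'}, \qquad \xi \mapsto \xi(t_D),
\end{equation*}
and when $\Gamma'=\Gamma$ this becomes an anti-isomorphism of $\Q$-algebras, composition $\xi_1 \circ \xi_2$ corresponding to the reversed product $\xi_2(t_D)\, \xi_1(t_D)$ in $t_D\mathbf{J}_\Q t_D$.

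Next I would expand $t_D t_A t_{D'}$ in the basis $\{t_A\}_{A\in\Xi}$ using the Peirce-type rules recalled in \S\ref{sec:asymp}: $t_A t_{D'} = t_A$ when $A \sim_L D'$ (i.e.\ $A\in\Gamma'$) and vanishes otherwise, while $t_D t_A = t_A$ when $A \sim_R D$. Because $D = D^t$ by~(P6) and the anti-involution $\Psi$ swaps left and right cells, the right cell of $D$ is precisely $\Gamma^t := \{C^t : C\in\Gamma\}$. Consequently $t_D\mathbf{J}_\Q t_{D'}$ has $\Q$-basis $\{t_A : A\in \Gamma^t\cap\Gamma'\}$, and the involution $C\leftrightarrow C^t$ rewrites its dimension as $\sharp(\Gamma\cap\Gamma'^t)$; this is~(2). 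For~(1), any $A\in\Gamma^t\cap\Gamma'$ satisfies $A\in\Gamma'$ and $A^t\in\Gamma$, while~(P14) gives $A\sim_{LR} A^t$, forcing $\Gamma$ and $\Gamma'$ into a common two-sided cell.

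For~(3), specialising $\Gamma'=\Gamma$ identifies the subspace $\mathbf{J}_{\Gamma\cap\Gamma^t,\Q}$ with the subalgebra $t_D\mathbf{J}_\Q t_D$, which has unit $t_D$. Composing the Hom--corner anti-isomorphism with the anti-involution $\Psi$ (which fixes $t_D$ because $D=D^t$ and restricts to an anti-automorphism of $\mathbf{J}_{\Gamma\cap\Gamma^t,\Q}$) produces a genuine $\Q$-algebra isomorphism onto $\End_{\mathbf{J}_\Q}(\mathbf{J}_{\Gamma,\Q})$. Split semisimplicity is then automatic: by Corollary~\ref{b1} the ambient algebra $\mathbf{J}_\Q$ is split semisimple, hence so is any corner $e\mathbf{J}_\Q e$ at an idempotent~$e$. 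I do not anticipate a real obstacle; the only cell-theoretic input beyond the corner computation is the identification of the right cell of $D$ with $\Gamma^t$, which uses~(P6) and the behaviour of $\Psi$ on cells, together with~(P14) for part~(1)---both black-boxed from Theorem~\ref{thm:cell structures}.
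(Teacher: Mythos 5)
Your proof is correct and is exactly the standard argument the paper has in mind when it says the lemma "follows from basic representation theory of algebras": identify $\Hom_{\mathbf{J}_{\Q}}(\mathbf{J}_{\Q}t_D,\mathbf{J}_{\Q}t_{D'})$ with the corner $t_D\mathbf{J}_{\Q}t_{D'}$, compute its basis $\{t_A : A\in\Gamma^t\cap\Gamma'\}$ from the idempotent rules and the fact that the right cell of $D=D^t$ is $\Gamma^t$, and invoke (P14) for part (1) and the anti-involution $t_C\mapsto t_{C^t}$ plus split semisimplicity of $\mathbf{J}_\Q$ for part (3). No gaps.
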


In fact, we can also imitate \cite[\S21.9]{Lu03} to show that the $\Q$-subalgebra $\mathbf{J}_{\Gamma\cap \Gamma^{t},\Q}$ is split semisimple with a pair of dual bases $\{t_{A}\:|\:A\in \Gamma\cap \Gamma^{t}\}$ and $\{t_{A^{t}}\:|\:A\in \Gamma\cap \Gamma^{t}\}$ with respect to some bilinear form.


Recall from Corollary \ref{c6} that $f_\lambda$ is a Schur element of $\mathbf{J}_{\Q}$.
\begin{prop}\label{g8}
Let $\lambda, \lambda' \in \mathcal{E}$. Then we have
$$\sum_{A\in \Gamma\cap \Gamma^{t}}\Tr(t_{A}, E_{\spadesuit}^{\lambda}) \Tr(t_{A^{t}}, E_{\spadesuit}^{\lambda'})=\delta_{\lambda \lambda'} f_\lambda[E^{\lambda} : [\Gamma]].$$
\end{prop}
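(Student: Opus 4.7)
The plan is to reduce the computation to Schur orthogonality inside the split semisimple corner algebra $\mathbf{J}_{\Gamma\cap\Gamma^t,\Q}=t_D\mathbf{J}_\Q t_D$ from Lemma \ref{h4}(3), where $D$ is the unique distinguished element in $\Gamma$. For any $A\in\Gamma\cap\Gamma^t$ one has $A\sim_L D$ and $A\sim_R D$ (using $D=D^t$ from Theorem~\ref{thm:cell structures}(P6)), so Proposition~\ref{characterization-two-cells} gives $t_Dt_A=t_A=t_At_D$, hence $t_A=t_Dt_At_D$. Setting $V^\lambda:=t_D E_\spadesuit^\lambda$, the endomorphism $t_A$ of $E_\spadesuit^\lambda$ thus vanishes on $(1-t_D)E_\spadesuit^\lambda$ and preserves $V^\lambda$, yielding
\[
\Tr(t_A,E_\spadesuit^\lambda)=\Tr(t_A|_{V^\lambda}),\qquad A\in\Gamma\cap\Gamma^t.
\]
Lemma~\ref{g6} simultaneously identifies $\dim V^\lambda=\Tr(t_D,E_\spadesuit^\lambda)=[E^\lambda:[\Gamma]]$ and shows that $V^\lambda\ne 0$ precisely when $\lambda\in\mathcal{E}_{\mathbf c}$, where $\mathbf c$ is the two-sided cell containing $\Gamma$.

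Next I would invoke the standard corner principle for semisimple algebras: $\{V^\lambda\mid\lambda\in\mathcal E_{\mathbf c}\}$ is a complete set of pairwise non-isomorphic simple modules of $\mathbf{J}_{\Gamma\cap\Gamma^t,\Q}$. The trace form $\tau$ of $\mathbf{J}_\Q$ restricts to a non-degenerate symmetric associative bilinear form on the corner, with respect to which $\{t_A\mid A\in\Gamma\cap\Gamma^t\}$ and $\{t_{A^t}\mid A\in\Gamma\cap\Gamma^t\}$ are dual bases, as recorded right after Lemma~\ref{h4}. The Schur orthogonality relation (the corner-algebra analogue of \eqref{c2}) then gives
\[
\sum_{A\in\Gamma\cap\Gamma^t}\Tr(t_A|_{V^\lambda})\Tr(t_{A^t}|_{V^{\lambda'}})=\delta_{\lambda\lambda'}\dim(V^\lambda)\cdot c_\lambda^\Gamma,
\]
where $c_\lambda^\Gamma$ denotes the Schur element of the simple $V^\lambda$ inside $\mathbf{J}_{\Gamma\cap\Gamma^t,\Q}$.

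The main obstacle is then to verify the identification $c_\lambda^\Gamma=f_\lambda$. For a split semisimple $\Q$-algebra equipped with a non-degenerate symmetric associative form, the Schur element of any simple is $\tau(e)^{-1}$ for any primitive idempotent $e$ acting non-trivially on that simple. Decomposing $\mathbf{J}_\Q\cong\bigoplus_\lambda M_{d_\lambda}(\Q)$, the idempotent $t_D$ corresponds in each $\lambda$-factor ($\lambda\in\mathcal E_{\mathbf c}$) to a projection of rank $\dim V^\lambda$, so $t_D\mathbf{J}_\Q t_D\cong\bigoplus_{\lambda\in\mathcal E_{\mathbf c}}M_{\dim V^\lambda}(\Q)$ and a rank-one primitive idempotent $e$ of the $\lambda$-block of the corner remains a rank-one (hence primitive) idempotent of the $\lambda$-block of $\mathbf{J}_\Q$. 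Since the bilinear form on the corner is literally the restriction of $\tau$, the value $\tau(e)^{-1}$ coincides in both algebras, giving $c_\lambda^\Gamma=f_\lambda$. Substituting back together with $\dim V^\lambda=[E^\lambda:[\Gamma]]$ yields the proposition.
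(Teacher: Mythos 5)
Your proposal is correct and follows essentially the same route as the paper: restrict attention to the corner module $t_DE_{\spadesuit}^{\lambda}$ over $\mathbf{J}_{\Gamma\cap\Gamma^{t},\Q}$, use $\Tr(t_A,E_{\spadesuit}^{\lambda})=\Tr(t_A,t_DE_{\spadesuit}^{\lambda})$, and apply Schur orthogonality in the corner together with Lemma~\ref{g6}. The only difference is that you explicitly justify the step the paper leaves implicit --- that the Schur element of $t_DE_{\spadesuit}^{\lambda}$ for the restricted form $\tau$ coincides with $f_\lambda$, via primitivity of corner idempotents in $\mathbf{J}_{\Q}$ --- which is a genuine and correctly executed filling of that gap (your parenthetical ``$V^\lambda\neq 0$ precisely when $\lambda\in\mathcal{E}_{\mathbf c}$'' is slightly stronger than what Lemma~\ref{g6} gives, but nothing in the argument depends on the ``only if'' direction).
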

\begin{proof}
	Note that $t_D E_{\spadesuit}^\lambda$ is a $\mathbf{J}_{\Gamma \cap \Gamma^{t},\Q}$-module and $$\Tr(t_{A}, E_{\spadesuit}^\lambda)=\Tr(t_{A} t_{D}, E_{\spadesuit}^\lambda)=\Tr(t_{A}, t_{D} E_{\spadesuit}^\lambda)\quad\mbox{for any $A \in \Gamma \cap \Gamma^{t}$}.$$
Thanks to Lemma \ref{g6}, we have
	$$\sum_{A\in \Gamma\cap \Gamma^{t}}\Tr(t_{A}, E_{\spadesuit}^{\lambda}) \Tr(t_{A^{t}}, E_{\spadesuit}^{\lambda'})=\delta_{\lambda \lambda'} f_\lambda \dim t_{D} E_{\spadesuit}^\lambda=\delta_{\lambda \lambda'} f_\lambda[E^{\lambda} : [\Gamma]].$$
\end{proof}

Fix a simple $\mathbf{J}_{\Q}$-module $E_{\spadesuit}^{\lambda}$ and choose a $\Q$-basis of $E_{\spadesuit}^{\lambda}$. Recall the matrix representation $\rho^\lambda:\mathbf{J}_{\Q} \rightarrow \Mat_{d_\lambda}(\Q)$ introduced in \S\ref{subsec:rep}. We set $$\chi_{A}^{\lambda} :=\Tr(t_{A}, E_{\spadesuit}^{\lambda})=\sum_{\mathfrak{s} \in M(\lambda)} \rho_{\mathfrak{ss}}^\lambda(t_A)\quad \mbox{for $A\in\Xi$}.$$
By an argument similar to \cite[Lemma 20.13(a)]{Lu03}, we see that $\chi_{A^{t}}^{\lambda}$ is the complex conjugate of $\chi_{A}^{\lambda}$. Then we have the following result (cf. \cite[\S3.5]{Lu87c} or \cite[Remark 3.5]{Ge14}).
\begin{lem}\label{h5}
For $A\in \Xi$, $\chi_{A}^{\lambda}\neq 0$ for some $\lambda\in \mathcal{E}$ if and only if $A\sim_{L} A^{t}$.
\end{lem}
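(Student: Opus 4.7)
The plan is to prove the two directions of the biconditional separately. For the forward direction (``$\chi_A^\lambda \neq 0$ for some $\lambda\Rightarrow A\sim_L A^t$'') I will use cyclicity of the trace together with the orthogonality of distinguished idempotents in $\mathbf{J}$. For the converse I will invoke the special modules constructed in Theorem~\ref{positive-lines}, whose positivity makes the relevant trace manifestly nonzero.

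For the forward direction, let $D_L(A),D_R(A)\in\mathcal{D}$ denote the unique distinguished elements characterized by $t_At_{D_L(A)}=t_A$ (equivalently $A\sim_L D_L(A)$) and $t_{D_R(A)}t_A=t_A$ (equivalently $A\sim_R D_R(A)$); existence and uniqueness come from P13. A short verification using P6 ($D^t=D$) and the anti-involution $\Psi$ (under which $\sim_L$ and $\sim_R$ are swapped) shows $D_L(A)=D_R(A)\Leftrightarrow A\sim_L A^t$. Writing $t_A=t_{D_R(A)}t_At_{D_L(A)}$ and using cyclicity of trace,
\[
\chi_A^\lambda=\Tr\bigl(t_{D_L(A)}t_{D_R(A)}t_A,\,E_\spadesuit^\lambda\bigr),
\]
which vanishes for every $\lambda$ once $D_L(A)\neq D_R(A)$, because the family $\{t_D : D\in\mathcal{D}\}$ consists of pairwise orthogonal idempotents.

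For the converse, assume $A\sim_L A^t$, let $\mathbf{c}$ be its two-sided cell, and let $\Gamma_0\in\mathbb{L}$ denote the common left cell of $A$ and $A^t$. Consider the special $\mathbf{J}_{\mathbf{c},\C}$-module $M_{\Gamma_0}=\bigoplus_{\Gamma'\in\mathbb{L}}M_{\Gamma_0,\Gamma'}$ of Theorem~\ref{positive-lines}, simple by part~(3) and with each summand a positive line by part~(1). Applying part~(2) with the labels $\tilde\Gamma=\tilde\Gamma'=\Gamma_0$ (forced by $A\sim_L A^t$) yields $t_AM_{\Gamma_0,\Gamma'}=0$ for all $\Gamma'\neq\Gamma_0$ and $t_AM_{\Gamma_0,\Gamma_0}^+=M_{\Gamma_0,\Gamma_0}^+$. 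Hence $t_A$ acts as a \emph{positive} scalar on the one-dimensional space $M_{\Gamma_0,\Gamma_0}$ and annihilates every other summand, giving $\Tr(t_A,M_{\Gamma_0})>0$. Since $M_{\Gamma_0}$ descends to $\Q$ (remark following Theorem~\ref{positive-lines}) and identifies with $E_{\spadesuit,\C}^\lambda$ for some $\lambda\in\mathcal{E}_{\mathbf{c}}$, one concludes $\chi_A^\lambda>0$.

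The main obstacle is the converse. A more direct attempt would evaluate $\Tr(t_A)$ on the left cell module $\mathbf{J}t_D$ as $\sum_{B\in\Gamma_0}\gamma_{A,B}^{B^t}=\sum_\lambda\chi_D^\lambda\chi_A^\lambda$ (via Lemma~\ref{g6}), reducing the problem to positivity of a non-negative integer sum; but for non-distinguished $A$ with $A\sim_L A^t$ no individual term $\gamma_{A,B}^{B^t}$ can be pinpointed as nonzero (P3 only forces nonvanishing when the upper index lies in $\mathcal{D}$). The special-module framework bypasses this difficulty by producing a representation whose diagonal $t_A$-action is visibly a positive scalar.
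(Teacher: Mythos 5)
Your argument is correct, but it is worth noting that the paper itself supplies no proof of this lemma: it simply refers to the corresponding Hecke-algebra statements in Lusztig's \emph{Leading coefficients} paper and in Geck's work on the Frobenius--Schur indicator, implicitly transporting them to the Schur-algebra setting via Proposition~\ref{prop:d=d2}. Your forward direction is exactly the standard argument in those references: writing $t_A=t_{D_R(A)}t_At_{D_L(A)}$, using P13/P6 to identify $D_L(A)$ (resp.\ $D_R(A)$) with the distinguished element of the left cell of $A$ (resp.\ of $A^t$), and killing the trace by orthogonality of the $t_D$ when these differ. Your converse, however, takes a genuinely different route: instead of the classical orthogonality/leading-coefficient analysis, you feed $t_A$ into the special module $M_{\Gamma_0}$ of Theorem~\ref{positive-lines}, where parts (1) and (2) force $t_A$ to annihilate every summand $M_{\Gamma_0,\Gamma'}$ with $\Gamma'\neq\Gamma_0$ and to act on the positive line $M_{\Gamma_0,\Gamma_0}$ by a strictly positive scalar, so that $\chi_A^\lambda=\Tr(t_A,M_{\Gamma_0})>0$ for the $\lambda$ with $E_{\spadesuit,\C}^{\lambda}\cong M_{\Gamma_0}$. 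This is clean and entirely internal to the paper (no circularity: Theorem~\ref{positive-lines} rests on Lusztig's positivity for special representations, not on the present lemma), and it yields the stronger conclusion that some $\chi_A^\lambda$ is positive, not merely nonzero. Two small remarks: the appeal to rationality of $M_{\Gamma_0}$ is unnecessary, since $\mathbf{J}_\Q$ is split semisimple and traces are unchanged under scalar extension; and your closing diagnosis of why the naive computation $\Tr(t_A,\mathbf{J}t_D)=\sum_{B\in\Gamma_0}\gamma_{B,B^t}^{A}$ cannot be concluded termwise from P3 alone is accurate --- it is precisely the point that the special-module positivity circumvents.
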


The following proposition is inspired by \cite[Corollary 5.8]{Lu84} and also \cite[Theorem 1.8.1]{GJ11}.
\begin{prop}\label{h6}
Let $\Gamma$ be a left cell in $\Xi$ and $\lambda, \lambda' \in \mathcal{E}$.
\begin{enumerate}
\item
If $\lambda \neq \lambda'$, then for any $\mathfrak{s,t} \in M(\lambda)$ and $\mathfrak{u, v}\in M(\lambda')$, we have
\begin{align*}
\sum_{A\in \Gamma}\rho^\lambda_{\mathfrak{st}}(t_A)\rho^{\lambda'}_{\mathfrak{uv}}(t_{A^t})=0\quad \text{and}\quad \sum_{A\in \Gamma}\chi_{A}^{\lambda} \chi_{A^{t}}^{\lambda'}=0.
\end{align*}
\item
If $\lambda=\lambda'$, then for any $\mathfrak{s} \in M(\lambda)$ we have
\begin{align*}
\sum\limits_{\mathfrak{t} \in M(\lambda)}\sum_{A\in \Gamma}\rho^\lambda_{\mathfrak{st}}(t_A)\rho^\lambda_{\mathfrak{ts}}(t_{A^t})=\sum_{A\in \Gamma}\chi_{A}^{\lambda} \chi_{A^{t}}^{\lambda}=[E_{\spadesuit}^{\lambda} : \mathbf{J}_{\Gamma,\Q}] f_\lambda.
\end{align*}
 In particular, $[E_{\spadesuit}^{\lambda} : \mathbf{J}_{\Gamma,\Q}]> 0$ if and only if there exists at least one $A\in \Gamma$ such that $\chi_{A}^{\lambda}\neq 0$.
\end{enumerate}
\end{prop}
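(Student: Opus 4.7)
The plan is to derive both parts from Schur's orthogonality relation \eqref{c2} by inserting the distinguished idempotent $t_D\in \mathcal{D}\cap\Gamma$ (whose existence and uniqueness is Theorem~\ref{thm:cell structures}(P13)) to convert the restricted sum $\sum_{A\in\Gamma}$ into an unrestricted sum $\sum_{A\in\Xi}$. The key observation, recorded in \S\ref{sec:asymp}, is that $t_At_D = t_A$ when $A\sim_L D$ and vanishes otherwise; since $\Gamma$ is a left cell containing $D$, right-multiplication by $t_D$ is exactly the characteristic projection onto $\Gamma$.

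I would first compute the generic bilinear expression
\begin{align*}
S^{\lambda,\lambda'}_{\mathfrak{st},\mathfrak{uv}} := \sum_{A\in\Gamma}\rho^\lambda_{\mathfrak{st}}(t_A)\rho^{\lambda'}_{\mathfrak{uv}}(t_{A^t})
&= \sum_{A\in\Xi}\rho^\lambda_{\mathfrak{st}}(t_At_D)\rho^{\lambda'}_{\mathfrak{uv}}(t_{A^t}) \\
&= \sum_{\mathfrak{r}\in M(\lambda)}\rho^\lambda_{\mathfrak{rt}}(t_D)\sum_{A\in\Xi}\rho^\lambda_{\mathfrak{sr}}(t_A)\rho^{\lambda'}_{\mathfrak{uv}}(t_{A^t}),
\end{align*}
and apply \eqref{c2} to the inner sum, which equals $\delta_{(\lambda,\mathfrak{s,r}),(\lambda',\mathfrak{v,u})}f_\lambda$. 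This yields the master formula $S^{\lambda,\lambda'}_{\mathfrak{st},\mathfrak{uv}} = \delta_{\lambda\lambda'}\delta_{\mathfrak{sv}}\rho^\lambda_{\mathfrak{ut}}(t_D)f_\lambda$. Part (1) is immediate: if $\lambda\neq \lambda'$ then all $S^{\lambda,\lambda'}_{\mathfrak{st},\mathfrak{uv}}$ vanish, and specialising to the diagonal $\mathfrak{s=t}$, $\mathfrak{u=v}$ and summing gives $\sum_{A\in\Gamma}\chi_A^\lambda\chi_{A^t}^{\lambda'}=0$.

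For Part (2), specialising to $\lambda=\lambda'$, $\mathfrak{u=t}$, $\mathfrak{v=s}$ gives $S^{\lambda,\lambda}_{\mathfrak{st},\mathfrak{ts}} = \rho^\lambda_{\mathfrak{tt}}(t_D)f_\lambda$, and summing over $\mathfrak{t}$ produces the first identity with value $\chi_D^\lambda f_\lambda$. For the character version, expanding $\chi_A^\lambda\chi_{A^t}^\lambda = \sum_{i,j}\rho^\lambda_{ii}(t_A)\rho^\lambda_{jj}(t_{A^t})$ and applying the master formula with $\mathfrak{s=t}=i$, $\mathfrak{u=v}=j$ collapses the double sum via $\delta_{ij}$ to the same value $\chi_D^\lambda f_\lambda$. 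By Lemma~\ref{g6}, $\chi_D^\lambda = \Tr(t_D,E_\spadesuit^\lambda) = [E^\lambda : [\Gamma]] = [E_\spadesuit^\lambda : \mathbf{J}_{\Gamma,\Q}]$, completing both displayed identities.

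For the final equivalence: since $\rho^\lambda$ takes integer values (Lemma~\ref{integ}), each $\chi_A^\lambda \in\Z$, and the remark above Lemma~\ref{h5} that $\chi_{A^t}^\lambda$ is the complex conjugate of $\chi_A^\lambda$ then simplifies to $\chi_{A^t}^\lambda = \chi_A^\lambda$. Hence $\sum_{A\in\Gamma}\chi_A^\lambda\chi_{A^t}^\lambda = \sum_{A\in\Gamma}(\chi_A^\lambda)^2$ is a sum of non-negative integers, strictly positive exactly when some $A\in\Gamma$ has $\chi_A^\lambda\neq 0$. Applying the same sum-of-squares reasoning to the formula $d_\lambda f_\lambda = \sum_A \chi_A^\lambda \chi_{A^t}^\lambda$ of Corollary~\ref{c6} shows $f_\lambda>0$, so the relation $[E_\spadesuit^\lambda : \mathbf{J}_{\Gamma,\Q}] = f_\lambda^{-1}\sum_{A\in\Gamma}(\chi_A^\lambda)^2$ yields the claimed iff. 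The only technical care required is in tracking the Kronecker deltas through the matrix indices, which is purely bookkeeping; no substantive obstacle arises beyond a careful invocation of \eqref{c2}.
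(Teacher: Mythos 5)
Your argument is correct and complete: inserting the distinguished idempotent $t_D\in\Gamma$ (via $t_At_D=t_A$ for $A\sim_L D$ and $0$ otherwise) to turn $\sum_{A\in\Gamma}$ into $\sum_{A\in\Xi}$ and then applying the Schur relation \eqref{c2} is exactly the intended route, and your master formula, the identification $\chi_D^\lambda=\Tr(t_D,E_\spadesuit^\lambda)=[E_\spadesuit^\lambda:\mathbf{J}_{\Gamma,\Q}]$ via Lemma~\ref{g6}, and the sum-of-squares argument for the final equivalence all check out. The paper states this proposition without proof (citing only its inspirations), so your write-up supplies precisely the standard argument the authors had in mind.
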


Recall from \S\ref{sec:asymp} the $\Z$-map $\tau$ on $\mathbf{J}$.
We have the following result (cf. \cite[Proposition 3.6]{Ge11}).
\begin{prop}\label{i2}
For any $A, B, C\in \Xi$, we have
\begin{align*}
\gamma_{A, B}^{C}=\sum_{\lambda\in \mathcal{E}}\sum_{\mathfrak{s,t,u} \in M(\lambda)}\frac{1}{f_\lambda} \rho^\lambda_{\mathfrak{st}}(t_A)\rho^\lambda_{\mathfrak{tu}}(t_B)\rho^\lambda_{\mathfrak{us}}(t_C).
\end{align*}
\end{prop}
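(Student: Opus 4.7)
The plan is to prove Proposition \ref{i2} by reducing $\gamma_{A,B}^{C}$ to a value of the canonical symmetrizing trace $\tau$, then decomposing $\tau$ in terms of the irreducible characters with Schur elements as coefficients.

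First, I would show the identity
$\gamma_{A,B}^{C} = \tau(t_{A} t_{B} t_{C})$
in $\mathbf{J}_{\Q}$. Starting from $t_{A}t_{B} = \sum_{E \in \Xi} \gamma_{A,B}^{E^{t}} t_{E}$, multiplying on the right by $t_{C}$ and applying $\tau$, the non-degeneracy relation $\tau(t_{E} t_{C}) = \delta_{E, C^{t}}$ (mentioned in \S\ref{sec:asymp}) isolates the term $E = C^{t}$, giving $\tau(t_{A}t_{B}t_{C}) = \gamma_{A,B}^{(C^{t})^{t}} = \gamma_{A,B}^{C}$.

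Next, I would establish the expansion
\begin{equation*}
\tau_{\Q} = \sum_{\lambda \in \mathcal{E}} \frac{1}{f_{\lambda}}\, \chi^{\lambda}
\end{equation*}
as linear functionals on $\mathbf{J}_{\Q}$, where $\chi^{\lambda} = \mathrm{Tr}(\cdot, E_{\spadesuit}^{\lambda})$. This is where the Schur relation \eqref{c3} is used: for any $A, B \in \Xi$,
\begin{equation*}
\sum_{\lambda} \frac{1}{f_{\lambda}} \chi^{\lambda}(t_{A} t_{B}) = \sum_{\lambda}\frac{1}{f_{\lambda}}\sum_{\mathfrak{s,t} \in M(\lambda)} \rho^{\lambda}_{\mathfrak{st}}(t_{A})\rho^{\lambda}_{\mathfrak{ts}}(t_{B}) = \delta_{A, B^{t}} = \tau(t_{A} t_{B}).
\end{equation*}
Since every basis element can be written as $t_{A} = t_{A}\cdot t_{D_{A}}$ for the unique distinguished element $D_{A}$ with $A \sim_{L} D_{A}$ (by the idempotent decomposition of the identity in $\mathbf{J}$), the above identity propagates to $\tau(t_{A}) = \sum_{\lambda} f_{\lambda}^{-1} \chi^{\lambda}(t_{A})$ for all $A$, and hence $\tau_{\Q} = \sum_{\lambda} f_{\lambda}^{-1} \chi^{\lambda}$ on all of $\mathbf{J}_{\Q}$.

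Finally, combining the two steps,
\begin{equation*}
\gamma_{A,B}^{C} = \tau(t_{A}t_{B}t_{C}) = \sum_{\lambda} \frac{1}{f_{\lambda}} \chi^{\lambda}(t_{A}t_{B}t_{C}) = \sum_{\lambda}\frac{1}{f_{\lambda}} \mathrm{Tr}\bigl(\rho^{\lambda}(t_{A})\rho^{\lambda}(t_{B})\rho^{\lambda}(t_{C})\bigr),
\end{equation*}
and expanding the trace of the triple matrix product as $\sum_{\mathfrak{s,t,u} \in M(\lambda)} \rho^{\lambda}_{\mathfrak{st}}(t_{A})\rho^{\lambda}_{\mathfrak{tu}}(t_{B})\rho^{\lambda}_{\mathfrak{us}}(t_{C})$ delivers the claimed formula. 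There is no serious obstacle here; the only mild subtlety is justifying that the identity $\tau = \sum_{\lambda} f_{\lambda}^{-1}\chi^{\lambda}$ extends from products $t_{A}t_{B}$ to all basis elements, but this is handled by the remark above using the distinguished idempotents $\{t_{D} \mid D \in \mathcal{D}\}$ that sum to the identity of $\mathbf{J}$.
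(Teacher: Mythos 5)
Your proposal is correct and follows essentially the same route as the paper: identify $\gamma_{A,B}^{C}$ with $\tau(t_{A}t_{B}t_{C})$ and decompose $\tau$ as $\sum_{\lambda} f_{\lambda}^{-1}\mathrm{Tr}(\cdot,E_{\spadesuit}^{\lambda})$ before expanding the trace of the triple matrix product. The only difference is that you derive the character decomposition of $\tau$ from the Schur relation \eqref{c3} (using the distinguished idempotents to extend from products to all basis elements), whereas the paper simply cites \cite[Proposition 19.2(c)]{Lu03} for this standard fact about symmetric split semisimple algebras.
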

\begin{proof}
By \cite[Proposition 19.2(c)]{Lu03}, it is a direct calculation that
\begin{align*}
\gamma_{A, B}^{C}&=\tau(t_{A}t_{B}t_{C})=\sum_{\lambda\in \mathcal{E}} \frac{1}{f_\lambda} \Tr(t_{A}t_{B}t_{C}, E_{\spadesuit}^{\lambda})\\
&=\sum_{\lambda\in \mathcal{E}} \frac{1}{f_\lambda} \Tr\big(\rho^\lambda(t_{A}t_{B}t_{C})\big)=\sum_{\lambda\in \mathcal{E}} \frac{1}{f_\lambda} \Tr\big(\rho^\lambda(t_{A})\rho^\lambda(t_{B})\rho^\lambda(t_{C})\big)\\
&=\sum_{\lambda\in \mathcal{E}}\sum_{\mathfrak{s,t,u} \in M(\lambda)}\frac{1}{f_\lambda} \rho^\lambda_{\mathfrak{st}}(t_A)\rho^\lambda_{\mathfrak{tu}}(t_B)\rho^\lambda_{\mathfrak{us}}(t_C).
\end{align*}
\end{proof}

\begin{rem}
Inspired by Proposition \ref{i2}, we can imitate \cite{Ge09} to construct $\mathbf{J}_{\Q}$ in an entirely different fashion, without any reference to the canonical basis $\big\{\{A\}\:|\:A\in \Xi\big\}$. We omit the details here.
\end{rem}

The following result gives an extremely strong condition on the expansion of a left cell representation in terms of the irreducible ones (cf. \cite[Lemma 4.6]{Ge05}).
\begin{lem}\label{h1}
Let $\Gamma$ be a left cell in $\Xi$. Then we have
\begin{align*}
1=\sum_{\lambda\in\mathcal{E}}\frac{1}{f_\lambda}[E^{\lambda} : [\Gamma]].
\end{align*}
\end{lem}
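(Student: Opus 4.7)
The plan is to evaluate the trace $\tau$ on the distinguished idempotent $t_D$, where $D$ is the unique distinguished element of $\Gamma$, in two different ways and compare. By definition of $\tau$ given in \S\ref{sec:asymp}, we have $\tau(t_D)=1$ since $D\in\mathcal{D}$. This will be the left-hand side of the desired identity.

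For the other side, I would invoke the fact that $\mathbf{J}_\Q$ is a split semisimple symmetric $\Q$-algebra (by Corollary \ref{b1}) whose symmetrizing trace $\tau$ has Schur elements $\{f_\lambda\}_{\lambda\in\mathcal{E}}$ — this is precisely the interpretation of $f_\lambda$ already exploited in Corollary \ref{c6} and in the proof of Proposition \ref{i2}, where the identity
\[
\tau_\Q \;=\; \sum_{\lambda\in\mathcal{E}} \frac{1}{f_\lambda}\,\Tr(\,\cdot\,, E_{\spadesuit}^{\lambda})
\]
was used to express $\gamma_{A,B}^C$. Applying this identity at $t_D$ yields
\[
1 \;=\; \tau(t_D) \;=\; \sum_{\lambda\in\mathcal{E}} \frac{1}{f_\lambda}\,\Tr(t_D, E_{\spadesuit}^{\lambda}).
\]

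Finally, by Lemma \ref{g6} we have $\Tr(t_D, E_{\spadesuit}^{\lambda})=[E^{\lambda}:[\Gamma]]$ for every $\lambda\in\mathcal{E}$. Substituting this into the display above gives the desired formula. There is no real obstacle to this argument; the content of the lemma is essentially a reformulation of $\tau(t_D)=1$ in light of the structure of $\mathbf{J}_\Q$ as a split semisimple symmetric algebra and the interpretation of $\Tr(t_D,-)$ as a multiplicity in the left cell module.
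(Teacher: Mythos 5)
Your proposal is correct and follows essentially the same route as the paper: both evaluate $\tau(t_D)=1$, expand $\tau$ as $\sum_{\lambda}\frac{1}{f_\lambda}\Tr(\cdot,E_{\spadesuit}^{\lambda})$ using the symmetric-algebra structure of $\mathbf{J}_{\Q}$ with Schur elements $f_\lambda$ (the paper cites \cite[Proposition 19.2(c)]{Lu03} for this, the same fact used in Proposition \ref{i2}), and then apply Lemma \ref{g6} to identify $\Tr(t_D,E_{\spadesuit}^{\lambda})$ with the multiplicity $[E^{\lambda}:[\Gamma]]$. No gaps.
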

\begin{proof}
By \cite[Proposition 19.2(c)]{Lu03}, we have
\begin{align*}
1=\tau(t_{D})=\sum_{\lambda\in\mathcal{E}}\frac{1}{f_\lambda}\Tr(t_{D}, E_{\spadesuit}^{\lambda}).
\end{align*}
On the other hand, by Lemma \ref{g6}, we have $\Tr(t_{D}, E_{\spadesuit}^{\lambda})=[E^{\lambda} : [\Gamma]]$ for each $\lambda\in\mathcal{E}$. We then obtain the desired result.
\end{proof}

From Lemma \ref{h1}, we can easily obtain the following result.
\begin{cor}\label{h2}
Let $\Gamma$ be a left cell in $\Xi$. Assume that there exists some $\lambda\in\mathcal{E}$ such that $[E^{\lambda} : [\Gamma]]\neq 0$ and $f_\lambda=1$. Then $[\Gamma]\cong E^{\lambda}$.
\end{cor}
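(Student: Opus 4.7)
The plan is to exploit the decomposition identity in Lemma~\ref{h1} by a positivity argument. Writing $[\Gamma]\cong\bigoplus_{\mu\in\mathcal{E}}(E^{\mu})^{\oplus m_\mu}$ with $m_\mu:=[E^{\mu}:[\Gamma]]\in\mathbb{Z}_{\geq 0}$, Lemma~\ref{h1} gives
\[
1\;=\;\sum_{\mu\in\mathcal{E}}\frac{m_\mu}{f_\mu}.
\]
If I can show that $f_\mu>0$ for every $\mu\in\mathcal{E}$, then every summand on the right is a non-negative rational number, and the hypotheses $m_\lambda\geq 1$ and $f_\lambda=1$ already force the $\lambda$-term alone to be at least $1$. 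Combined with the total being exactly $1$, this immediately yields $m_\lambda=1$ and $m_\mu=0$ for all $\mu\neq\lambda$, whence $[\Gamma]\cong E^\lambda$.

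The only point needing justification is the positivity $f_\mu>0$. For this I would appeal to the identity in Corollary~\ref{c6},
\[
d_\mu f_\mu\;=\;\sum_{A\in\Xi}\operatorname{Tr}(t_A,E_{\spadesuit}^\mu)\operatorname{Tr}(t_{A^t},E_{\spadesuit}^\mu),
\]
together with the fact (recalled just before Lemma~\ref{h5}, and originating from an argument analogous to \cite[Lemma~20.13(a)]{Lu03}) that $\chi_{A^t}^\mu=\operatorname{Tr}(t_{A^t},E_\spadesuit^\mu)$ is the complex conjugate of $\chi_A^\mu$. Thus the sum equals $\sum_{A}|\chi_A^\mu|^2\geq 0$, and it is strictly positive because the representation $E_{\spadesuit}^\mu$ is nonzero so some character value is nonzero; since $d_\mu>0$, we conclude $f_\mu>0$.

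No step is genuinely hard; the mild subtlety is just ensuring that we can treat the sum in Lemma~\ref{h1} as a sum of non-negative rationals, which is exactly what the positivity of the $f_\mu$'s supplies. With that in hand, the conclusion is forced by the elementary observation that a sum of non-negative rationals equal to $1$, one of whose terms is already an integer $\geq 1$, must have that term equal to $1$ and all others equal to $0$.
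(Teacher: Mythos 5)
Your proposal is correct and follows the paper's route exactly: the paper derives Corollary~\ref{h2} directly from Lemma~\ref{h1}, and your positivity argument for the $f_\mu$ (via Corollary~\ref{c6} and the fact that $\chi_{A^t}^\mu$ is the complex conjugate of $\chi_A^\mu$) just makes explicit a detail the paper leaves implicit.
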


\begin{cor}\label{h3}
Let $\Gamma$ be a left cell in $\Xi$. Assume that $f_\lambda=1$ for all $\lambda\in\mathcal{E}$. Then $[\Gamma]$ is irreducible.
\end{cor}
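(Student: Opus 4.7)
The plan is to deduce this directly from Lemma~\ref{h1}, which asserts that
\[
1=\sum_{\lambda\in\mathcal{E}}\frac{1}{f_\lambda}[E^{\lambda} : [\Gamma]].
\]
First, I would specialize this identity under the hypothesis $f_\lambda=1$ for every $\lambda\in\mathcal{E}$, obtaining
\[
1=\sum_{\lambda\in\mathcal{E}}[E^{\lambda} : [\Gamma]].
\]

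Next, I would observe that each multiplicity $[E^{\lambda} : [\Gamma]]$ is a non-negative integer (as the multiplicity of a simple constituent in the semisimple $\Sc_{\Q}$-module $[\Gamma]$, noting that $\Sc_{\Q}$ is split semisimple as recalled in \S\ref{sec:sch}, so that $[\Gamma]$ decomposes as a direct sum of the $E^{\lambda}$'s with non-negative integer multiplicities). Since a sum of non-negative integers equals~$1$, precisely one term equals $1$ and all others vanish. Hence there is a unique $\lambda_0\in\mathcal{E}$ such that $[E^{\lambda_0} : [\Gamma]]=1$ and $[E^{\lambda}:[\Gamma]]=0$ for all $\lambda\neq\lambda_0$. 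Therefore $[\Gamma]\cong E^{\lambda_0}$ is irreducible.

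There is essentially no obstacle here; the statement is an immediate consequence of the integrality and non-negativity of the multiplicities together with Lemma~\ref{h1}. Alternatively, one may invoke Corollary~\ref{h2} once one has identified the unique $\lambda_0$ with $[E^{\lambda_0} : [\Gamma]]\neq 0$, since $f_{\lambda_0}=1$ by assumption, giving $[\Gamma]\cong E^{\lambda_0}$ directly.
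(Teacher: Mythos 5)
Your proof is correct and is exactly the argument the paper intends: specialize Lemma~\ref{h1} to $f_\lambda=1$, use non-negativity and integrality of the multiplicities to conclude that exactly one multiplicity equals $1$, hence $[\Gamma]\cong E^{\lambda_0}$ (equivalently, invoke Corollary~\ref{h2}). Nothing is missing.
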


\subsection{Application: cellular bases}
Finally, let us look at an application of Corollary \ref{h3}, which is similar to the discussions in \cite[Example 4.2]{Ge07}.

Assume that there are no bad primes, i.e. $f_{\lambda}=1$ for all $\lambda\in \mathcal{E}$. Then we claim that there are signs $\delta_{A}=\pm 1$ ($A\in \Xi$) such that
\begin{align*}
\big\{\delta_{A}\{A\}\:|\:A\in \Xi\big\} \text{ is a cellular basis of $\Sc_{q}$.}
\end{align*}

Let $\Gamma$ be a left cell in $\Xi$. Then $[\Gamma]$ is a simple $\mathcal{S}_{\Q}$-module by Corollary \ref{h3}. Moreover, it is easy to see that all simple $\mathcal{S}_{\Q}$-modules arise in this way. Thus, for each $\lambda\in \mathcal{E}$, we can choose a unique left cell $\Gamma^{\lambda}$ such that $\mathbf{J}_{\Gamma^{\lambda}, \Q} \cong E_{\spadesuit}^{\lambda}$. Let us write
\begin{align*}
\Gamma^{\lambda}=\{E_{\mathfrak{s}}\:|\:\mathfrak{s}\in M(\lambda)\} \quad \text{ for }\lambda\in \mathcal{E}.
\end{align*}
Then $\{t_{E_{\mathfrak{s}}}\:|\:\mathfrak{s}\in M(\lambda)\}$ is a basis of $\mathbf{J}_{\Gamma^{\lambda},\Q}$ and the corresponding matrix coefficients are
\begin{align*}
\rho_\mathfrak{st}^\lambda(t_A)=\gamma_{A, E_{\mathfrak{t}}}^{E_{\mathfrak{s}}^{t}}\quad \text{ for $A\in \Xi$ and $\mathfrak{s,t} \in M(\lambda)$.}
\end{align*}
Since $\gamma_{A, E_{\mathfrak{t}}}^{E_{\mathfrak{s}}^{t}}=\gamma_{E_{\mathfrak{t}}^{t}, A^{t}}^{E_{\mathfrak{s}}}=\gamma_{A^{t}, E_{\mathfrak{s}}}^{E_{\mathfrak{t}}^{t}}$, we have $\rho^\lambda(t_{A^t})=\rho^\lambda(t_A)^{\mathrm{tr}}$ for all $A\in \Xi$. Thus, if we take $P^\lambda$ to be the identity matrix of size $d_\lambda$, then the conditions $(a)$ and $(b)$ in Proposition \ref{c4} are satisfied. Moreover, for fixed $\mathfrak{s,t} \in M(\lambda)$, the Schur relation \eqref{c2} becomes
\begin{align*}
\sum_{A\in \Xi}(\gamma_{A, E_{\mathfrak{t}}}^{E_{\mathfrak{s}}^{t}})^{2}=\sum_{A\in \Xi} \rho_\mathfrak{st}^\lambda(t_A) \rho_\mathfrak{ts}^{\lambda}(t_{A^t})=f_\lambda=1.
\end{align*}
Therefore, we deduce that there is a unique $C=C_{\lambda}(\mathfrak{s}, \mathfrak{t})\in \Xi$ such that $\gamma_{C, E_{\mathfrak{t}}}^{E_{\mathfrak{s}}^{t}}\neq 0$, and moreover, $\gamma_{A, E_{\mathfrak{t}}}^{E_{\mathfrak{s}}^{t}}=\pm 1$. Now the formula in \eqref{cst} reads
\begin{align*}
C_\mathfrak{s,t}^\lambda=\sum_{A\in \Xi}\gamma_{A^{t}, E_{\mathfrak{s}}}^{E_{\mathfrak{t}}^{t}}\{A\}=\sum_{A\in \Xi}\gamma_{A, E_{\mathfrak{t}}}^{E_{\mathfrak{s}}^{t}}\{A\}=\gamma_{C, E_{\mathfrak{t}}}^{E_{\mathfrak{s}}^{t}}\{C\}=\delta_{C}\{C\},
\end{align*}
where $C=C_{\lambda}(\mathfrak{s}, \mathfrak{t})$ and $\delta_{C}=\pm 1$. Thus, $\big\{\delta_{A}\{A\}\:|\:A\in \Xi\big\}$ is a cellular basis of $\Sc_{q}$ as claimed.

The above assumptions are satisfied for $W$ of type $A$. Thanks to a geometric interpretation of the canonical basis $\{A\}$, we know that $g_{A,B}^{C}\in \N[q,q^{-1}]$ by \cite[Theorem 4.5]{LW22}. Hence we have $\gamma_{A,B}^{C}\in \N$ for all $A, B, C$. So, in this case, $\big\{\{A\}\:|\:A\in \Xi\big\}$ is a cellular basis of $\Sc_{q}$.


\end{document}